\numberwithin{equation}{section}
\newcommand{\dom}{\mbox{\rm dom}}
\title{Distributed deterministic asynchronous algorithms in time-varying
graphs through Dykstra splitting\thanks{Submitted to the editors DATE.
\funding{Grant R-146-000-214-112 from the Faculty of Science, National University of Singapore}}}
\author{C.H. Jeffrey Pang \thanks{Department of Mathematics, National University of Singapore (\email{matpchj@nus.edu.sg},\url{http://www.math.nus.edu.sg/\~matpchj}).}}
\begin{document}
\maketitle

\begin{abstract}
Consider the setting where each vertex of a graph has a function,
and communications can only occur between vertices connected by an
edge. We wish to minimize the sum of these functions. For the case
when each function is the sum of a strongly convex quadratic and a
convex function, we propose a distributed version of Dykstra's algorithm.
The computations to optimize the dual objective function can run asynchronously
without a global clock, and in a distributed manner without a central
controller. Convergence to the primal minimizer is deterministic instead
of being probabilistic, and is guaranteed as long as in each cycle,
the edges where two-way communications occur connects all vertices.
We also look at an accelerated algorithm, and an algorithm for the
case when the functions on the nodes are not strongly convex.
\end{abstract}

\begin{keywords}
Distributed optimization, Averaged consensus, Dykstra's algorithm,
time-varying graphs
\end{keywords}

\begin{AMS}   68W15, 90C25, 90C30, 65K05 \end{AMS}

\section{Introduction}

Let $\mathcal{G}=(\mathcal{V},\mathcal{E})$ be an undirected and
connected graph defined by the set of nodes (agents) $\mathcal{V}$
and the set of edges $\mathcal{E}\subset\mathcal{V}\times\mathcal{V}$.
Since \emph{$\mathcal{G}$} is undirected, we assume that both $(i,j)$
and $(j,i)$ refer to the same edge when it exists. 

Let $X$ be a finite dimensional Hilbert space. For a closed convex
set $C$, let $\delta_{C}(\cdot)$ be the indicator function defined
as $\delta_{C}(x)=0$ if $x\in C$, and equals $\infty$ otherwise.
For each edge $(i,j)\in\mathcal{E}$, let the hyperplane $H_{(i,j)}\subset X^{|\mathcal{V}|}$
be defined by 
\begin{equation}
H_{(i,j)}=\{(x_{1},\dots,x_{n}):x_{i}=x_{j}\}.\label{eq:H-i-j-subspace}
\end{equation}

We consider the following problem throughout the rest of this paper.

\begin{problem}

\label{prob:prob-statement} Let $(\mathcal{V},\mathcal{E})$ be a
connected graph. Suppose $H_{(i,j)}$ is defined as in \eqref{eq:H-i-j-subspace}
for all $(i,j)\in\mathcal{E}$, and let $f_{i}:X\to\bar{\mathbb{R}}$
(where $\bar{\mathbb{R}}:=\mathbb{R}\cup\{\infty\}$ throughout this
paper) be closed convex functions for all $i\in\mathcal{V}$. Let
$\mathbf{f}_{i}:X^{|\mathcal{V}|}\to\bar{\mathbb{R}}$ be defined
by $\mathbf{f}_{i}(x)=f_{i}(x_{i})$ (i.e., $\mathbf{f}_{i}$ depends
only on $i$-th variable). The primal problem of interest is 
\begin{equation}
\min_{x\in X^{|\mathcal{V}|}}\sum_{(i,j)\in\mathcal{E}}\underbrace{\delta_{H_{(i,j)}}(x)}_{h_{(i,j)}(x)}+\sum_{i\in\mathcal{V}}\underbrace{\mathbf{f}_{i}(x)}_{h_{i}(x)}.\label{eq:common-primal}
\end{equation}

\end{problem}

For each $\alpha\in\mathcal{E}\cup\mathcal{V}$, the function $h_{\alpha}:X^{|\mathcal{V}|}\to\bar{\mathbb{R}}$
is as marked in \eqref{eq:common-primal}. Since $(\mathcal{V},\mathcal{E})$
is connected, the problem \eqref{eq:common-primal} is equivalent
to 
\begin{equation}
\min_{x\in X}\sum_{i\in\mathcal{V}}f_{i}(x),\label{eq:centralized-primal}
\end{equation}
but we write it in the form \eqref{eq:common-primal} to emphasize
that the only vertex which has knowledge of the function $f_{i}(\cdot)$
is the vertex $i$. 

\subsection{\label{subsec:Distrib-algs}Distributed algorithms for \eqref{eq:common-primal}}

We give a brief summary of distributed algorithms for minimizing \eqref{eq:common-primal}.
Some properties desirable for a distributed algorithm, especially
when $|\mathcal{V}|$ is large, are as follows:
\begin{enumerate}
\item The algorithm is applicable to directed graphs, where only one way
communication is allowed between two vertices connected by a directed
edge. 
\item The algorithm has deterministic convergence.
\item The algorithm is asynchronous. There is no need for a global clock,
and each node can perform calculations at its own pace without being
affected by other slower nodes.
\item The algorithm is distributed (i.e., in intermediate computations,
each node only exchanges data with its neighbors) and decentralized
(i.e., there is no central node connected to all other nodes to coordinate
computations).
\item The algorithm allows for time-varying graphs.
\end{enumerate}
We emphasize that the algorithm that we look at in this paper is only
applicable to undirected graphs, and hence does not satisfy property
(1). Nevertheless, we give a brief summary of the literature behind
distributed algorithms for directed graphs in this paragraph. In the
case where only one way communication is allowed between two vertices
connected by a directed edge, the survey \cite{Nedich_survey} records
many algorithms derived from the subgradient algorithm for solving
\eqref{eq:common-primal}. If the edges in a network are directed,
it appears that the subgradient method is the only reasonable method.
The subgradient method requires diminishing step sizes for convergence
in the general case, which affects its convergence rates. More details
of recent developments are in \cite{Nedich_talk_2017}. A notable
paper is \cite{EXTRA_Shi_Ling_Wu_Yin}. The case of time-varying graphs
was first studied in \cite{Nedich_Olshevsky} and further extended
in \cite{Nedich_Olshevsky_Shi}. In time-varying graphs, the assumption
needed for convergence is for the edge set $\mathcal{E}$ to vary
over time. But if the edges are undirected, then alternative methods
may be possible, and would usually be faster than subgradient methods.
For strongly convex problems, linear convergence is possible. These
algorithms appear to be synchronous, and require the functions involved
to be smooth.

Two common methods for minimizing the sum of two convex functions
are the ADMM and Peaceman-Rachford algorithms (with the Douglas Rachford
algorithm a special case of the latter). The Peaceman-Rachford algorithm
is an example of a splitting method, and it is well known that the
ADMM is dual to the Douglas Rachford method \cite{Gabay_ADMM}. In
order to minimize the sum of more than two functions, the product
space reformulation is a well-studied option. (See for example \cite[Chapter 7]{Boyd_Eckstein_ADMM_review}.)
Another strategy is \cite{Eckstein_Svaiter_2009_SICON,Eckstein_Combettes_MAPR},
which is a splitting method for the sum of more than two functions
without using the product space reformulation. The latter development
in \cite{Eckstein_Combettes_MAPR} allows for lags in the collection
of data for nodes where the computation time is greater, thus allowing
for an asynchronous operation. Still, this algorithm requires a central
controller, so it is different from the algorithms we consider in
this paper. 

We now look at asynchronous distributed algorithm with deterministic
convergence (rather than probabilistic convergence). In some applications,
the guarantees from deterministic convergence can outweigh other advantages
of algorithms with randomized convergence. We mention that the paper
\cite{Gurbuzbalaban_Ozdaglar_Parrilo_SIOPT_2017} and the extension
\cite{Aytekin_F_Johansson_2016} are algorithms that give deterministic
convergence for strongly convex problems that are primal in nature,
so these algorithms cannot handle more than one constraint sets. The
method in \cite{Aybat_Hamedani_2016} may arguably be considered to
have these properties. Other than that, we are not aware of a decentralized,
asynchronous algorithm that has deterministic convergence for \eqref{eq:common-primal}
and is not a subgradient method. 

But a decentralized asynchronous probabilistic method derived from
the ADMM is proposed in \cite{Iutzeler_Bianchi_Ciblat_Hachem_1st_paper_dist,Bianchi_Hachem_Iutzeler_2nd_paper_dist}.
The key idea in the first paper is the introduction of a randomized
Gauss-Seidel iterations of the so called Douglas-Rachford operator,
and the second paper extends the first by incorporating the work of
\cite{Vu_2013,Condat_2013}. This concept was generalized in \cite{AROCK_Peng_Xu_Yan_Yin}.
All the works just mentioned use monotone operator theory (see for
example the textbook \cite{BauschkeCombettes11}). Such algorithms
require computations in the nodes to follow specific probability distributions,
so they do not seem immediately applicable to the setting of time-varying
graphs in \cite{Nedich_Olshevsky} mentioned earlier. Another randomized,
distributed method for nonconvex functions is \cite{Facchinei_Scutari_2016}.

Another randomized distributed method most similar to what we discuss
in this paper is that in \cite{Notars_asyn_distrib_2015}. We discuss
this more in Subsection \ref{subsec:Dyk-method}.

As mentioned in the survey \cite{Nedich_survey}, the primal problem
\eqref{eq:centralized-primal} has dual 
\begin{eqnarray}
 & \max_{\{y_{i}\}_{i\in\mathcal{V}}\subset X} & -\sum_{i\in\mathcal{V}}f_{i}^{*}(y_{i})\label{eq:better-dual}\\
 & \mbox{s.t.} & \sum_{i\in\mathcal{V}}y_{i}=0,\nonumber 
\end{eqnarray}
which is also known as the resource allocation problem. 

\subsection{\label{subsec:Dyk-method}A special case of \eqref{eq:common-primal}
through Dykstra's algorithm}

When some of the functions in the primal problem of the form \eqref{eq:centralized-primal}
are extended valued, it may be difficult to find a primal feasible
point in the first place. We first look at the problem 

\begin{equation}
\min_{x\in X^{|\mathcal{V}|}}\frac{1}{2}\|x-x_{0}\|^{2}+\sum_{(i,j)\in\mathcal{E}}\delta_{H_{(i,j)}}(x)+\sum_{i\in\mathcal{V}}\mathbf{f}_{i}(x),\label{eq:Dyk-primal}
\end{equation}
where $\|x\|^{2}:=\langle x,x\rangle=\sum_{i=1}^{n}\langle x_{i},x_{i}\rangle$
and $x_{0}\in X^{|\mathcal{V}|}$. This problem fits the framework
of \eqref{eq:common-primal} since 
\begin{equation}
\frac{1}{2}\|x-x_{0}\|^{2}+\sum_{i\in\mathcal{V}}\mathbf{f}_{i}(x)=\sum_{i\in\mathcal{V}}\left[\mathbf{f}_{i}(x)+\frac{1}{2}\|x_{i}-[x_{0}]_{i}\|^{2}\right].\label{eq:separate_obj}
\end{equation}

In the case when $\mathbf{f}_{i}\equiv0$ for all $i\in\mathcal{V}$,
the problem reduces to the average consensus problem \cite{Boyd_distrib_averaging,Distrib_averaging_Dimakis_Kar_Moura_Rabbat_Scaglione}.
The minimizer to \eqref{eq:Dyk-primal} is the vector $(\bar{x},\dots,\bar{x})\in X^{|\mathcal{V}|}$,
where $\bar{x}=\frac{1}{|\mathcal{V}|}\sum_{i\in\mathcal{V}}[x_{0}]_{i}$. 

The (Fenchel) dual to \eqref{eq:Dyk-primal} is

\begin{equation}
\max_{z_{\alpha}\in X^{|\mathcal{V}|},\alpha\in\mathcal{E}\cup\mathcal{V}}F(\{z_{\alpha}\}_{\alpha\in\mathcal{E}\cup\mathcal{V}}),\label{eq:dual-fn}
\end{equation}
where 
\begin{eqnarray}
 &  & F(\{z_{\alpha}\}_{\alpha\in\mathcal{E}\cup\mathcal{V}})\nonumber \\
 & := & -\frac{1}{2}\left\Vert x_{0}-\sum_{\alpha\in\mathcal{E}\cup\mathcal{V}}z_{\alpha}\right\Vert ^{2}+\frac{1}{2}\|x_{0}\|^{2}-\sum_{(i,j)\in\mathcal{E}}\delta_{H_{(i,j)}}^{*}(z_{(i,j)})-\sum_{i\in\mathcal{V}}\mathbf{f}_{i}^{*}(z_{i}).\label{eq:Dykstra-dual-defn}
\end{eqnarray}

In the case when $f_{i}(\cdot)\equiv\delta_{C_{i}}(\cdot)$ for some
closed convex set $C_{i}$ for all $i$, Dykstra's algorithm finds
the primal minimizer of the problem 
\begin{equation}
\min_{x\in X}\frac{1}{2}\|x-x_{0}\|^{2}+\sum_{i=1}^{n}f_{i}(x),\label{eq:illus-dyk-P}
\end{equation}
where $x_{0}\in X$ (note that in \eqref{eq:illus-dyk-P}, $x$ and
$x_{0}$ lie in $X$ instead of $X^{|\mathcal{V}|}$ like in \eqref{eq:Dyk-primal})
by maximizing the dual 
\begin{equation}
\max_{y\in X^{n}}-\frac{1}{2}\left\Vert x_{0}-\sum_{i=1}^{n}y_{i}\right\Vert -\sum_{i=1}^{n}f_{i}^{*}(y_{i})+\frac{1}{2}\|x_{0}\|^{2}\label{eq:illus-dyk-D}
\end{equation}
through block coordinate minimization. If each $f_{i}(\cdot)$ are
allowed to be any closed convex function, it can now be seen that
\eqref{eq:Dykstra-dual-defn} is actually a special case of \eqref{eq:illus-dyk-D}.

Dykstra's algorithm was first studied in \cite{Dykstra83} in the
case where $f_{i}(\cdot)\equiv\delta_{C_{i}}(\cdot)$ and $C_{i}$
are closed convex sets for all $i$. The convergence of the primal
iterates to the projection of $x_{0}$ onto $\cap_{i=1}^{n}C_{i}$
was proved in \cite{BD86}, and is sometimes called the Boyle-Dykstra
theorem. Dykstra's algorithm was independently noted in \cite{Han88}
to be block coordinate minimization on the dual problem. The proof
in \cite{BD86} was adapted in \cite{Gaffke_Mathar} using duality.
We remark that the Boyle-Dykstra theorem is remarkable because the
convergence to the primal minimizer occurs even when there is no dual
optimizer. (For example, look at \cite[page 9]{Han88} where two circles
in $\mathbb{R}^{2}$ intersect at only one point.) The case when sampling
of the sets is noncyclic is addressed in \cite{Hundal-Deutsch-97}
(among other things not directly relevant to this paper). As pointed
out in \cite{Pang_Dyk_spl}, the Boyle-Dykstra theorem holds even
if $f_{i}(\cdot)$ are closed convex functions instead of $\delta_{C_{i}}(\cdot)$.
 (We recently became aware that the dual ascent interpretation can
be traced to \cite{Combettes_Dung_Vu_SVAA,Combettes_Dung_Vu_JMAA,Pesquet_Abboud_gang_2017_JMIV},
but the connection to distributed optimization was not pointed out
there.) For more on the background on Dykstra's algorithm, we refer
to \cite{BauschkeCombettes11,BB96_survey,Deustch01,EsRa11}. Some
recent work on Dykstra's algorithm include \cite{Tibshirani_on_Dykstra}.

Dykstra's algorithm was extended to a distributed algorithm in \cite{Borkar_distrib_dyk},
and they highlight the works \cite{Aybat_Hamedani_2016,LeeNedich2013,Nedic_Ram_Veeravalli_2010,Ozdaglar_Nedich_Parrilo}
on distributed optimization. The work in \cite{Borkar_distrib_dyk}
is vastly different from how Dykstra's algorithm is studied in \cite{BD86}
and \cite{Gaffke_Mathar}.

It turns out that \cite{Notars_asyn_distrib_2015} discusses a similar
problem to \eqref{eq:Dyk-primal}. They generalize \eqref{eq:Dyk-primal}
by allowing the functions $x_{i}\mapsto\frac{1}{2}\|x_{i}-[x_{0}]_{i}\|^{2}$
to be any strongly convex function, and proceed to calculate that
the dual has a similar form as \eqref{eq:dual-fn} and \eqref{eq:Dykstra-dual-defn}.
Their dual is still a sum of a smooth component and a separable component,
which they solve with randomized dual proximal gradient. We discuss
the differences between their paper and ours in Subsection \ref{subsec:contrib}.

\subsection{\label{subsec:contrib}Contributions of this paper}

In this paper, we propose looking at the formulation \eqref{eq:Dyk-primal}
and show that Dykstra's algorithm applied to this formulation gives
an algorithm with properties (2)-(5) in Subsection \ref{subsec:Distrib-algs}.
As stated in the introduction, we are not aware of any other asynchronous
distributed algorithm that has deterministic convergence other than
\cite{Gurbuzbalaban_Ozdaglar_Parrilo_SIOPT_2017,Aytekin_F_Johansson_2016,Aybat_Hamedani_2016},
though our assumption that the functions on each vertex has a strongly
convex function with known modulus might be a bit strong.

We highlight the differences from \cite{Notars_asyn_distrib_2015}.
The first difference is that we show that Dykstra's algorithm gives
deterministic convergence (property (2)), whereas \cite{Notars_asyn_distrib_2015}
pointed out probabilistic convergence. A naive application of Dykstra's
algorithm to \eqref{eq:Dyk-primal} would mean that all the edges
in the graph have to be used in one cycle, which would not cover the
setting of time-varying graphs as done in \cite{Nedich_Olshevsky}.
But we show that as long as the graph is (using the definition in
\cite{Nedich_Olshevsky}) uniformly connected, then convergence can
be achieved. (See Remark \ref{rem:time-vary}.) Dykstra's splitting
also gives these two desirable properties that were not noticed in
\cite{Notars_asyn_distrib_2015}:
\begin{enumerate}
\item [(6)]The iterates of the algorithm converges to the primal minimizer
even when a dual minimizer does not exist.
\item [(7)]Since Dykstra's splitting is a dual ascent algorithm, as many
dual variables can be maximized at one time as possible. This is an
advantage as subproblems involving more dual variables lead to greedier,
and possibly greater, increase of the dual objective value.
\end{enumerate}
Next, in Section \ref{sec:non-strongly-convex-alg}, we look at a
decentralized dual ascent algorithm for \eqref{eq:common-primal}
(which does not have the quadratic term) through the dual problem
\eqref{eq:better-dual}. Once again, the algorithm is asynchronous.
In contrast to the adapted Dykstra's algorithm, we now optimize dual
variables corresponding to a collection of vertices at a time. We
show an example where convergence fails. The algorithm works on collections
of vertices of the graph at a time, and is thus robust to lost communications
in edges of the graph.

Lastly, in Section \ref{sec:Accelerate}, we discuss ideas for an
accelerated proximal gradient method on the dual \eqref{eq:dual-fn}.
This algorithm runs on a global clock, and it does not work for time-varying
graphs. (I.e., it does not satisfy properties (4)-(5)). But asynchronous
greedy steps satisfying property (7) can be performed to speed up
the increase of the dual objective value. 

\section{\label{sec:conv-Dyk}Convergence of distributed Dykstra's algorithm}

In this section, we state our distributed Dykstra's algorithm, make
some remarks that may be helpful in understanding the algorithm, and
prove its convergence without constraint qualifications.

\subsection{Statement of distributed Dykstra's algorithm}

Let $D\subset X^{|\mathcal{V}|}$ be the diagonal set defined by 
\[
D:=\{x\in X^{|\mathcal{V}|}:x_{1}=x_{2}=\cdots=x_{|\mathcal{V}|}\}.
\]
With the definition of $H_{(i,j)}$ in \eqref{eq:H-i-j-subspace}
and $\mathcal{G}=(\mathcal{V},\mathcal{E})$ being a connected graph,
it is obvious that 
\begin{equation}
\bigcap_{(i,j)\in\mathcal{E}}H_{(i,j)}=D\mbox{ and }\sum_{(i,j)\in\mathcal{E}}H_{(i,j)}^{\perp}=D^{\perp}=\left\{ z\in X^{|\mathcal{V}|}:\sum_{i\in\mathcal{V}}z_{i}=0\right\} .\label{eq:D-and-D-perp}
\end{equation}

\begin{proposition}
\label{prop:E-connects-V}Suppose $\mathcal{G}=(\mathcal{V},\mathcal{E})$
is a connected graph. Let $H_{(i,j)}$ be the set \eqref{eq:H-i-j-subspace}
(defining the linear constraints relating the connection between nodes
$i$ and $j$). Let $\mathcal{E}'$ be a subset of $\mathcal{E}$.
The following conditions are equivalent:

\begin{enumerate}
\item $\cap_{(i,j)\in\mathcal{E}'}H_{(i,j)}=D$
\item $\sum_{(i,j)\in\mathcal{E}'}H_{(i,j)}^{\perp}=D^{\perp}.$
\item The graph $\mathcal{G}'=(\mathcal{V},\mathcal{E}')$ is connected. 
\end{enumerate}
\end{proposition}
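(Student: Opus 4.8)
My plan is to establish the two implications (3)~$\Rightarrow$~(1) and (1)~$\Rightarrow$~(3) directly (the latter by contraposition), and to obtain (1)~$\Leftrightarrow$~(2) essentially for free from elementary orthogonal-complement duality. The key structural observation is that each $H_{(i,j)}$ and $D$ are \emph{linear subspaces} of the finite-dimensional space $X^{|\mathcal{V}|}$, so taking orthogonal complements is an involution that converts intersections into sums. Thus the whole statement splits into a purely linear-algebraic half (relating (1) and (2)) and a graph-theoretic half (relating (1) to connectedness).

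For (1)~$\Leftrightarrow$~(2), I would first record the identity $\big(\bigcap_{(i,j)\in\mathcal{E}'}H_{(i,j)}\big)^\perp=\sum_{(i,j)\in\mathcal{E}'}H_{(i,j)}^\perp$, valid for finitely many subspaces (all subspaces are closed in finite dimensions, so no completion issue arises). Applying $(\cdot)^\perp$ to the equation $\bigcap_{(i,j)\in\mathcal{E}'}H_{(i,j)}=D$ and invoking $D^{\perp\perp}=D$ converts (1) into (2) and back, so these two conditions are literally orthogonal-complement transforms of one another and are therefore equivalent. This step uses nothing about the graph beyond the definitions of $H_{(i,j)}$ and $D$, confirming that all the genuine content lies in tying either condition to connectedness.

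For (3)~$\Rightarrow$~(1), the inclusion $D\subseteq\bigcap_{(i,j)\in\mathcal{E}'}H_{(i,j)}$ is immediate, since a vector with all coordinates equal satisfies $x_i=x_j$ on every edge. For the reverse inclusion, take $x\in\bigcap_{(i,j)\in\mathcal{E}'}H_{(i,j)}$, so $x_i=x_j$ whenever $(i,j)\in\mathcal{E}'$. Given arbitrary vertices $k,l\in\mathcal{V}$, connectedness of $\mathcal{G}'=(\mathcal{V},\mathcal{E}')$ supplies a path $k=v_0,v_1,\dots,v_m=l$ with each $(v_s,v_{s+1})\in\mathcal{E}'$; chaining $x_{v_0}=x_{v_1}=\cdots=x_{v_m}$ gives $x_k=x_l$. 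As $k,l$ are arbitrary, all coordinates of $x$ coincide, hence $x\in D$.

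For (1)~$\Rightarrow$~(3) I would argue the contrapositive: suppose $\mathcal{G}'$ is disconnected, and let $S$ be one of its connected components, a nonempty proper subset of $\mathcal{V}$. Fixing distinct $a\neq b$ in $X$ (possible since $X\neq\{0\}$), define $x$ by $x_i=a$ for $i\in S$ and $x_i=b$ otherwise. Every edge $(i,j)\in\mathcal{E}'$ has both endpoints in a single component, hence both in $S$ or both outside $S$, so $x_i=x_j$ and $x\in\bigcap_{(i,j)\in\mathcal{E}'}H_{(i,j)}$, yet $x\notin D$ because $a\neq b$; thus $\bigcap_{(i,j)\in\mathcal{E}'}H_{(i,j)}\neq D$, the negation of (1). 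The only point needing care here --- the closest thing to an obstacle --- is the bookkeeping that no edge of $\mathcal{E}'$ crosses between components, which is exactly the defining property of a connected component. Combining these three implications with the duality of the second paragraph closes the cycle of equivalences.
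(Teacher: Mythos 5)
Your proposal is correct and fills in exactly the details the paper leaves implicit: the paper's proof consists only of the remarks that (1)$\Leftrightarrow$(3) is ``easy'' and (1)$\Leftrightarrow$(2) is ``simple linear algebra,'' and your orthogonal-complement duality plus path-chaining/component arguments are the standard way to make those remarks precise. No substantive difference in approach.
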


\begin{proof}
The equivalence between (1) and (3) is easy, and the equivalence between
(1) and (2) is simple linear algebra. 
\end{proof}
\begin{definition}
We say that\emph{ $\mathcal{E}'$ connects $\mathcal{V}$ }if any
of the equivalent properties in Proposition \ref{prop:E-connects-V}
is satisfied.
\end{definition}

We prove a lemma. 
\begin{lemma}
\label{lem:express-v-as-sum}(Expressing $v$ as a sum) Suppose $X$
is a finite dimensional Hilbert space. There is a $C_{1}>0$ such
that for all $v\in D^{\perp}$ and $\mathcal{E}'\subset\mathcal{E}$
such that $\mathcal{E}'$ connects $\mathcal{V}$, we can find $z_{(i,j)}\in H_{(i,j)}^{\perp}$
for all $(i,j)\in\mathcal{E}'$ such that $\sum_{(i,j)\in\mathcal{E}'}z_{(i,j)}=v$
and $\|z_{(i,j)}\|\leq C_{1}\|v\|$ for all $(i,j)\in\mathcal{E}'$.
\end{lemma}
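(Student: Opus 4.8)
The plan is to reduce the statement to a uniform bound over the finitely many edge-subsets of $\mathcal{E}$, and for each fixed subset to invoke surjectivity of a summation map together with the existence of a bounded right inverse in finite dimensions. First I would fix a subset $\mathcal{E}'\subset\mathcal{E}$ that connects $\mathcal{V}$ and consider the linear summation map
\[
T_{\mathcal{E}'}:\bigoplus_{(i,j)\in\mathcal{E}'}H_{(i,j)}^{\perp}\to D^{\perp},\qquad T_{\mathcal{E}'}\big((z_{(i,j)})_{(i,j)\in\mathcal{E}'}\big)=\sum_{(i,j)\in\mathcal{E}'}z_{(i,j)},
\]
where the domain carries the product norm $\|(z_{(i,j)})\|^{2}=\sum_{(i,j)\in\mathcal{E}'}\|z_{(i,j)}\|^{2}$. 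By Proposition \ref{prop:E-connects-V}(2), the hypothesis that $\mathcal{E}'$ connects $\mathcal{V}$ is exactly the statement $\sum_{(i,j)\in\mathcal{E}'}H_{(i,j)}^{\perp}=D^{\perp}$, i.e. that $T_{\mathcal{E}'}$ is surjective onto $D^{\perp}$. This already yields existence of a decomposition of any $v\in D^{\perp}$; the real content of the lemma is the quantitative norm bound.

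Next, since $T_{\mathcal{E}'}$ is a surjective linear map between finite dimensional spaces, it admits a bounded linear right inverse $S_{\mathcal{E}'}:D^{\perp}\to\bigoplus H_{(i,j)}^{\perp}$ with $T_{\mathcal{E}'}S_{\mathcal{E}'}=\mathrm{id}$; for concreteness I would take the Moore--Penrose pseudoinverse, whose operator norm equals the reciprocal of the smallest positive singular value of $T_{\mathcal{E}'}$ and is therefore finite. Setting $(z_{(i,j)}):=S_{\mathcal{E}'}v$ then gives $\sum_{(i,j)\in\mathcal{E}'}z_{(i,j)}=v$ together with $\|z_{(i,j)}\|\le\|(z_{(i,j)})\|\le\|S_{\mathcal{E}'}\|\,\|v\|$ for every edge, where the first inequality just uses that each component of a product vector is dominated by the whole in the product norm.

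Finally I would assemble the uniform constant. Because the graph is finite, the edge set $\mathcal{E}$ is finite, so there are only finitely many subsets $\mathcal{E}'\subset\mathcal{E}$; hence setting $C_{1}:=\max\{\|S_{\mathcal{E}'}\|:\mathcal{E}'\subset\mathcal{E}\text{ connects }\mathcal{V}\}$ produces a single constant valid simultaneously for all admissible $\mathcal{E}'$ and all $v\in D^{\perp}$ (uniformity over $v$ being automatic from the positive homogeneity of the bound).

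I do not expect a serious obstacle here: the only point requiring care is that $C_{1}$ must depend on neither $\mathcal{E}'$ nor $v$, and this is supplied essentially for free by the finiteness of $\mathcal{E}$ and the homogeneity of the linear right inverse. A more self-contained route that avoids the pseudoinverse would be to pass to a spanning tree of $(\mathcal{V},\mathcal{E}')$, on which the decomposition of $v\in D^{\perp}$ into edge contributions is \emph{unique} and can be written down explicitly (and hence bounded directly), padding the remaining edges of $\mathcal{E}'$ with zeros; but the bounded-right-inverse argument is cleaner and makes the uniform bound most transparent.
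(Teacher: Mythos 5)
Your proposal is correct and follows essentially the same route as the paper: both arguments reduce the claim to choosing a linear right inverse of the summation map $\bigoplus_{(i,j)\in\mathcal{E}'}H_{(i,j)}^{\perp}\to D^{\perp}$ (the paper does this by shrinking to subspaces $\tilde H_{(i,j)}$ that make the sum direct, you via the Moore--Penrose pseudoinverse), bounding its norm, and then taking the maximum over the finitely many connecting subsets $\mathcal{E}'$. Your spanning-tree remark is in fact exactly the paper's direct-sum construction.
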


\begin{proof}
This is elementary, so we only give an outline. Fix an $\mathcal{E}'$.
We can choose $\tilde{H}_{(i,j)}\subset H_{(i,j)}^{\perp}$ so that
$\sum_{(i,j)\in\mathcal{E}'}H_{(i,j)}^{\perp}=D^{\perp}$ is a direct
sum of $\{\tilde{H}_{(i,j)}\}_{(i,j)\in\mathcal{E}'}$. So $v$ can
be written uniquely as the sum $v=\sum_{(i,j)\in\mathcal{E}'}z_{(i,j)}$,
where $z_{(i,j)}\in\tilde{H}_{(i,j)}$. The mapping from $v$ to each
$z_{(i,j)}\in\tilde{H}_{(i,j)}$ is linear, and this linear map has
a norm bounded by some $C_{(i,j),\mathcal{E}'}$. Letting $C_{1}$
be the maximum of these $C_{(i,j),\mathcal{E}'}$ gives us our conclusion. 
\end{proof}
We present our distributed Dykstra's algorithm in Algorithm \ref{alg:Ext-Dyk}. 

\begin{algorithm}[H]
\caption{Decentralized Dykstra's algorithm} 

\label{alg:Ext-Dyk}

Consider the problem \eqref{eq:Dyk-primal} along with the associated
dual problem \eqref{eq:dual-fn}.

Let $\bar{w}$ be a positive integer. Let $C_{1}>0$ satisfy Lemma
\ref{lem:express-v-as-sum}. Our decentralized Dykstra's algorithm
is as follows:

01$\quad$Let 

\begin{itemize}
\item $z_{i}^{1,0}\in X^{|\mathcal{V}|}$ be a starting dual vector for
$\mathbf{f}_{i}(\cdot)$ for each $i\in\mathcal{V}$ so that $[z_{i}^{1,0}]_{j}=0$
for all $j\in\mathcal{V}\backslash\{i\}$. 
\begin{itemize}
\item $v_{H}^{1,0}\in D^{\perp}$ be a starting dual vector for \eqref{eq:dual-fn}.

\begin{itemize}
\item Note: $\{z_{(i,j)}^{n,0}\}_{(i,j)\in\mathcal{E}}$ is defined through
$v_{H}^{n,0}$ in \eqref{eq_m:resetted-z-i-j}.
\end{itemize}
\item Let $x^{1,0}$ be $x^{1,0}=x_{0}-v_{H}^{1,0}-\sum_{i\in\mathcal{V}}z_{i}^{1,0}$.
\end{itemize}
\end{itemize}
02$\quad$For $n=1,2,\dots$

03$\quad$$\quad$Let $\mathcal{E}_{n}\subset\mathcal{E}$ be such
that $\mathcal{E}_{n}$ connects $\mathcal{V}$. 

04$\quad$$\quad$Define $\{z_{(i,j)}^{n,0}\}_{(i,j)\in\mathcal{E}}$
so that:\begin{subequations}\label{eq_m:resetted-z-i-j} 
\begin{eqnarray}
z_{(i,j)}^{n,0} & = & 0\mbox{ for all }(i,j)\notin\mathcal{E}_{n}\label{eq:reset-z-i-j-1}\\
z_{(i,j)}^{n,0} & \in & H_{(i,j)}^{\perp}\mbox{ for all }(i,j)\in\mathcal{E}\label{eq:reset-z-i-j-2}\\
\|z_{(i,j)}^{n,0}\| & \leq & C_{1}\|v_{H}^{n,0}\|\mbox{ for all }(i,j)\in\mathcal{E}\label{eq:reset-z-i-j-3}\\
\mbox{ and }\sum_{(i,j)\in\mathcal{E}}z_{(i,j)}^{n,0} & = & v_{H}^{n,0}.\label{eq:reset-z-i-j-4}
\end{eqnarray}
\end{subequations}

$\quad$$\quad$(This is possible by Lemma \ref{lem:express-v-as-sum}.) 

05$\quad$$\quad$For $w=1,2,\dots,\bar{w}$

06$\quad$$\quad$$\quad$Choose a set $S_{n,w}\subset\mathcal{E}_{n}\cup\mathcal{V}$
such that $S_{n,w}\neq\emptyset$. 

07$\quad$$\quad$$\quad$Define $\{z_{\alpha}^{n,w}\}_{\alpha\in S_{n,w}}$
by 
\begin{equation}
\{z_{\alpha}^{n,w}\}_{\alpha\in S_{n,w}}=\underset{z_{\alpha},\alpha\in S_{n,w}}{\arg\min}\frac{1}{2}\left\Vert x_{0}-\sum_{\alpha\notin S_{n,w}}z_{\alpha}^{n,w-1}-\sum_{\alpha\in S_{n,w}}z_{\alpha}\right\Vert ^{2}+\sum_{\alpha\in S_{n,w}}h_{\alpha}^{*}(z_{\alpha}).\label{eq:Dykstra-min-subpblm}
\end{equation}

08$\quad$$\quad$$\quad$Set $z_{\alpha}^{n,w}:=z_{\alpha}^{n,w-1}$
for all $\alpha\notin S_{n,w}$.

09$\quad$$\quad$End For 

10$\quad$$\quad$Let $z_{i}^{n+1,0}=z_{i}^{n,\bar{w}}$ for all $i\in\mathcal{V}$
and $v_{H}^{n+1,0}=v_{H}^{n,\bar{w}}=\sum_{(i,j)\in\mathcal{E}}z_{(i,j)}^{n,\bar{w}}$.

11$\quad$End For 
\end{algorithm}
\begin{remark}

(Intuition behind Algorithm \ref{alg:Ext-Dyk}) We now provide some
intuition behind Algorithm \ref{alg:Ext-Dyk}. The classical Dykstra
splitting approach is the block coordinate maximization of the dual
problem \eqref{eq:dual-fn}-\eqref{eq:Dykstra-dual-defn}. This is
reflected in lines 6-8 of Algorithm \ref{alg:Ext-Dyk}. In order for
Algorithm \ref{alg:Ext-Dyk} to handle time-varying graphs, we choose
$\mathcal{E}_{n}\subset\mathcal{E}$ in line 3 so that $\mathcal{E}_{n}$
connects $\mathcal{V}$, and the problem 
\[
\max_{z_{\alpha}\in X^{|\mathcal{V}|},\alpha\in\mathcal{E}_{n}\cup\mathcal{V}}-\frac{1}{2}\left\Vert x_{0}-\sum_{\alpha\in\mathcal{E}_{n}\cup\mathcal{V}}z_{\alpha}\right\Vert ^{2}+\frac{1}{2}\|x_{0}\|^{2}-\sum_{(i,j)\in\mathcal{E}_{n}}\delta_{H_{(i,j)}}^{*}(z_{(i,j)})-\sum_{i\in\mathcal{V}}\mathbf{f}_{i}^{*}(z_{i})
\]
(note the $\mathcal{E}_{n}$ in the above formula) would have the
same optimal objective value as \eqref{eq:dual-fn}-\eqref{eq:Dykstra-dual-defn}
since the corresponding primal problems are equivalent and strong
duality holds. The subset $\mathcal{E}_{n}\subset\mathcal{E}$ chosen
in line 3 may be such that $z_{(i,j)}\neq0$, but $(i,j)\notin\mathcal{E}_{n}$.
So we perform line 4 so that \eqref{eq_m:resetted-z-i-j} holds, which
implies that $z_{(i,j)}=0$ for all $(i,j)\notin\mathcal{E}_{n}$,
while perserving $v_{H}^{n,\bar{w}}=v_{H}^{n+1,0}$ (see \eqref{eq_m:all_acronyms}
later). As we shall see in Remark \ref{rem:irrelevance-of-z-i-j}
later, the reassignment of $\{z_{(i,j)}\}_{(i,j)\in\mathcal{E}}$
in line 4 is necessary for further analysis, but may be ignored in
implementing the algorithm. Algorithm \ref{alg:Ext-Dyk-1} then shows
an equivalent formulation of Algorithm \ref{alg:Ext-Dyk} where one
only keeps track of $x^{n,w}$ and $\{z_{i}^{n,w}\}_{i\in\mathcal{V}}$.
If $f_{i}(\cdot)\equiv0$, then $z_{i}^{n,w}$ is always $0$ for
all $i\in\mathcal{V}$, so Algorithm \ref{alg:Ext-Dyk-1} reduces
to the averaged consensus algorithm \cite{Boyd_distrib_averaging,Distrib_averaging_Dimakis_Kar_Moura_Rabbat_Scaglione}.

\end{remark}

\begin{remark}

(Choice of $S_{n,w}$) The choice of $S_{n,w}$ allows for a flexibility
in how large one wants the subproblem \eqref{eq:Dykstra-min-subpblm}
to be. It is easy to see that a small $S_{n,w}$ allows for the subproblems
to be small and easy to solve. The larger the size of $S_{n,w}$,
the harder the subproblem, but greater increase in the dual objective
value is expected. An issue of choosing large $|S_{n,w}|$ is that
we need an extra coordination between the nodes in $V'_{n,w}$. When
$S_{n,w}\subset\mathcal{V}$ and $|S_{n,w}|=1$, there is no coordination
needed. Similarly, when $S_{n,w}\subset\mathcal{E}$ and $|S_{n,w}|=1$,
only two nodes need to coordinate with each other, which is okay for
an undirected graph. An implementer can, for example, choose a star-like
subgraph (i.e., there is a central node in the subgraph connecting
to all others) and apply an algorithm suitable for problems with a
centralized node. Since the dual objective value acts as a Lyapunov
function, one could choose $S_{n,w}$ to be as large as one can reasonably
solve to increase the dual objective value as much as one can. This
increase in the dual objective value can be greater if large subproblems
are solved partially compared to small subproblems solved fully. 

\end{remark}

To simplify calculations, we let $v_{A}$, $v_{H}$ and $x$ be denoted
by\begin{subequations}\label{eq_m:all_acronyms} 
\begin{eqnarray}
v_{H} & = & \sum_{(i,j)\in\mathcal{E}}z_{(i,j)}\label{eq:v-H-def}\\
v_{A} & = & v_{H}+\sum_{i\in\mathcal{V}}z_{i}\label{eq:from-10}\\
x & = & x_{0}-v_{A}.\label{eq:x-from-v-A}
\end{eqnarray}
\end{subequations}Intuitively, $v_{H}$ describes the sum of the
dual variables due to $H_{(i,j)}$ for all $(i,j)\in\mathcal{E}$,
$v_{A}$ is the sum of all dual variables, and $x$ is the estimate
of the primal variable. 

The following inequality describes the duality gap between \eqref{eq:Dyk-primal}
and \eqref{eq:dual-fn}. 
\begin{eqnarray}
 &  & \begin{array}{c}
\frac{1}{2}\|x_{0}-x\|^{2}+\underset{\alpha\in\mathcal{E}\cup\mathcal{V}}{\sum}h_{\alpha}(x)-F(\{z_{\alpha}\}_{\alpha\in\mathcal{E}\cup\mathcal{V}})\end{array}\label{eq:From-8}\\
 & \overset{\eqref{eq:Dykstra-dual-defn}}{=} & \begin{array}{c}
\frac{1}{2}\|x_{0}-x\|^{2}+\underset{\alpha\in\mathcal{E}\cup\mathcal{V}}{\sum}[h_{\alpha}(x)+h_{\alpha}^{*}(z_{\alpha})]\end{array}\nonumber \\
 &  & \begin{array}{c}
\qquad-\left\langle x_{0},\underset{\alpha\in\mathcal{E}\cup\mathcal{V}}{\sum}z_{\alpha}\right\rangle +\frac{1}{2}\left\Vert \underset{\alpha\in\mathcal{E}\cup\mathcal{V}}{\sum}z_{\alpha}\right\Vert ^{2}\end{array}\nonumber \\
 & \overset{\scriptsize\mbox{Fenchel duality}}{\geq} & \begin{array}{c}
\frac{1}{2}\|x_{0}-x\|^{2}+\left\langle x,\underset{\alpha\in\mathcal{E}\cup\mathcal{V}}{\sum}z_{\alpha}\right\rangle -\left\langle x_{0},\underset{\alpha\in\mathcal{E}\cup\mathcal{V}}{\sum}z_{\alpha}\right\rangle +\frac{1}{2}\left\Vert \underset{\alpha\in\mathcal{E}\cup\mathcal{V}}{\sum}z_{\alpha}\right\Vert ^{2}\end{array}\nonumber \\
 & = & \begin{array}{c}
\frac{1}{2}\left\Vert x_{0}-x-\underset{\alpha\in\mathcal{E}\cup\mathcal{V}}{\sum}z_{\alpha}\right\Vert ^{2}\geq0.\end{array}\nonumber 
\end{eqnarray}

\begin{claim}

\label{claim:Fenchel-duality} In Algorithm \ref{alg:Ext-Dyk}, for
all $\alpha\in S_{n,w}$, we have 
\begin{enumerate}
\item [(a)]$-x^{n,w}+\partial h_{\alpha}^{*}(z_{\alpha}^{n,w})\ni0$,
\item [(b)]$-z_{\alpha}^{n,w}+\partial h_{\alpha}(x^{n,w})\ni0$, and
\item [(c)]$h_{\alpha}(x^{n,w})+h_{\alpha}^{*}(z_{\alpha}^{n,w})=\langle x^{n,w},z_{\alpha}^{n,w}\rangle$. 
\end{enumerate}
\end{claim}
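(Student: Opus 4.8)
The plan is to recognize Claim~\ref{claim:Fenchel-duality} as nothing more than the first-order optimality condition for the block minimization \eqref{eq:Dykstra-min-subpblm}, combined with two standard facts from convex analysis: the conjugate subgradient theorem (for closed proper convex $h$, one has $x\in\partial h^{*}(z)$ if and only if $z\in\partial h(x)$) and the Fenchel--Young equality ($h(x)+h^{*}(z)=\langle x,z\rangle$ if and only if $z\in\partial h(x)$). Part (a) carries the only computation; parts (b) and (c) then follow immediately.

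First I would apply Fermat's rule to the minimizer $\{z_{\alpha}^{n,w}\}_{\alpha\in S_{n,w}}$ of \eqref{eq:Dykstra-min-subpblm}. The objective is the sum of a smooth quadratic in the block variables and a term $\sum_{\alpha\in S_{n,w}}h_{\alpha}^{*}(z_{\alpha})$ that is separable across $\alpha$, so its subdifferential splits coordinatewise. For each fixed $\alpha\in S_{n,w}$ the stationarity inclusion reads
\[
0\in-\left(x_{0}-\sum_{\beta\notin S_{n,w}}z_{\beta}^{n,w-1}-\sum_{\gamma\in S_{n,w}}z_{\gamma}^{n,w}\right)+\partial h_{\alpha}^{*}(z_{\alpha}^{n,w}),
\]
where the leading term is the negative gradient of the quadratic with respect to $z_{\alpha}$.

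The key step, and the only place where care is needed, is to identify the residual in parentheses with $x^{n,w}$. Here I would invoke line~08 of Algorithm~\ref{alg:Ext-Dyk}, which sets $z_{\beta}^{n,w}=z_{\beta}^{n,w-1}$ for all $\beta\notin S_{n,w}$; hence the residual equals $x_{0}-\sum_{\alpha\in\mathcal{E}\cup\mathcal{V}}z_{\alpha}^{n,w}=x_{0}-v_{A}^{n,w}=x^{n,w}$, using the definitions \eqref{eq:from-10} and \eqref{eq:x-from-v-A}. Substituting yields $0\in-x^{n,w}+\partial h_{\alpha}^{*}(z_{\alpha}^{n,w})$, which is exactly (a).

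Finally, (b) is immediate from (a) by the conjugate subgradient theorem, since $x^{n,w}\in\partial h_{\alpha}^{*}(z_{\alpha}^{n,w})$ is equivalent to $z_{\alpha}^{n,w}\in\partial h_{\alpha}(x^{n,w})$; and (c) is the Fenchel--Young equality, which holds precisely because $z_{\alpha}^{n,w}\in\partial h_{\alpha}(x^{n,w})$. I expect no genuine obstacle: existence of the minimizer need not be shown (the claim is conditional on the minimizer produced by line~07), and non-uniqueness of the individual blocks $z_{\alpha}^{n,w}$ (the quadratic sees only their sum) is harmless, since the stationarity inclusion holds for whichever minimizer the algorithm selects.
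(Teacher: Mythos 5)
Your proposal is correct and follows essentially the same route as the paper: the paper likewise deduces (a) from the optimality conditions of \eqref{eq:Dykstra-min-subpblm} together with the identity $x^{n,w}=x_{0}-\sum_{\alpha\in\mathcal{V}\cup\mathcal{E}}z_{\alpha}^{n,w}$ from \eqref{eq_m:all_acronyms}, and then notes that the equivalence of (a), (b) and (c) is standard. Your write-up merely makes explicit the use of line~08 and the conjugate subgradient/Fenchel--Young facts that the paper leaves implicit.
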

\begin{proof}
By taking the optimality conditions in \eqref{eq:Dykstra-min-subpblm}
with respect to $z_{\alpha}$ for $\alpha\in S_{n,w}$ and making
use of \eqref{eq_m:all_acronyms} to get $x^{n,w}=x_{0}-\sum_{\alpha\in\mathcal{V}\cup\mathcal{E}}z_{\alpha}^{n,w}$,
we deduce (a). The equivalence of (a), (b) and (c) is standard. 
\end{proof}
Even though Algorithm \ref{alg:Ext-Dyk} is described so that each
node $i\in\mathcal{V}$ and edge $(i,j)\in\mathcal{E}$ contains a
variable $z_{\alpha}\in X^{|\mathcal{V}|}$, the size of the variable
$z_{\alpha}$ that needs to be stored in each node and edge is small
due to sparsity.
\begin{proposition}
\label{prop:sparsity}(Sparsity of $z_{\alpha}$) We have $[z_{i}^{n,w}]_{j}=0$
for all $j\in\mathcal{V}\backslash\{i\}$, $n\geq1$ and $w\in\{0,1,\dots,\bar{w}\}$.
Similarly, $[z_{(i,j)}^{n,w}]_{k}=0$ for all $k\in\mathcal{V}\backslash\{i,j\}$,
$n\geq1$ and $w\in\{0,1,\dots,\bar{w}\}$.
\end{proposition}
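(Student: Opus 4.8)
The plan is to show that the sparsity pattern is forced by the domains of the conjugate functions $h_\alpha^*$ appearing in \eqref{eq:Dykstra-min-subpblm}, and then to propagate this through the algorithm by induction on the pair $(n,w)$. The two facts I would establish first are: (i) since $\mathbf{f}_i(x)=f_i(x_i)$ depends only on the $i$-th block, its conjugate satisfies $\mathbf{f}_i^*(z)=f_i^*(z_i)$ when $z_j=0$ for all $j\neq i$ and $\mathbf{f}_i^*(z)=+\infty$ otherwise, so that $\dom h_i^*\subseteq\{z:z_j=0\text{ for all }j\neq i\}$; and (ii) since $h_{(i,j)}=\delta_{H_{(i,j)}}$ is the indicator of a subspace, $h_{(i,j)}^*=\delta_{H_{(i,j)}^\perp}$, and a direct computation gives $H_{(i,j)}^\perp=\{z:z_i+z_j=0,\ z_k=0\text{ for all }k\notin\{i,j\}\}$, whence $\dom h_{(i,j)}^*\subseteq\{z:z_k=0\text{ for all }k\notin\{i,j\}\}$. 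Both facts are just the elementary computation already used implicitly in \eqref{eq:D-and-D-perp}.

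With these in hand I would argue by induction. For the base case, the initialization in line 01 imposes $[z_i^{1,0}]_j=0$ for $j\neq i$ directly, and the edge variables $z_{(i,j)}^{1,0}$ are produced by the reset \eqref{eq_m:resetted-z-i-j} in line 04, which requires $z_{(i,j)}^{n,0}\in H_{(i,j)}^\perp$; by fact (ii) this is exactly the required support on $\{i,j\}$. For the inductive step within a cycle, consider the update in lines 06--08. If $\alpha\notin S_{n,w}$, then line 08 copies $z_\alpha^{n,w}=z_\alpha^{n,w-1}$, so the support is inherited from the inductive hypothesis. If $\alpha\in S_{n,w}$, then $z_\alpha^{n,w}$ is a minimizer of \eqref{eq:Dykstra-min-subpblm}; since the objective is the sum of a finite quadratic and the extended-valued terms $h_\alpha^*(z_\alpha)$, and since a point of finite objective exists (each $\dom h_\alpha^*$ is nonempty because each $h_\alpha$ is closed convex proper), any minimizer must lie in $\prod_{\alpha\in S_{n,w}}\dom h_\alpha^*$, and facts (i)--(ii) give the desired support. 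Finally, the carry-over in line 10 sets $z_i^{n+1,0}=z_i^{n,\bar{w}}$, inheriting the node support, while the edge variables $z_{(i,j)}^{n+1,0}$ are again regenerated by the reset \eqref{eq_m:resetted-z-i-j}, whose $H_{(i,j)}^\perp$ constraint re-establishes fact (ii); this closes the induction across cycles and covers every $w\in\{0,1,\dots,\bar{w}\}$ and $n\geq 1$.

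The only step needing care is the minimization in line 07: the support constraint is not imposed explicitly but is enforced implicitly through the extended-valued conjugates, so the point is simply to observe that every minimizer lies in the (product of the) effective domains of the $h_\alpha^*$. This is where the argument genuinely uses that $\mathbf{f}_i$ and $\delta_{H_{(i,j)}}$ are ``blockwise'' in the product space $X^{|\mathcal{V}|}$; everything else is bookkeeping. I therefore expect a short proof, with the main obstacle being the combinatorial tracking of the four places where $z_\alpha$ is assigned (lines 01, 04, 07--08, 10) rather than any analytic difficulty.
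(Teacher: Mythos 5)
Your proof is correct and follows essentially the same route as the paper's: an induction along the algorithm in which the initialization and the reset \eqref{eq_m:resetted-z-i-j} handle the $w=0$ iterates and the block structure of $\mathbf{f}_{i}$ and $H_{(i,j)}^{\perp}$ handles the updates. The only cosmetic difference is that the paper obtains the support of the updated $z_{\alpha}^{n,w}$ from Claim \ref{claim:Fenchel-duality}(b) (i.e., $z_{\alpha}^{n,w}\in\partial h_{\alpha}(x^{n,w})$), whereas you obtain it from the minimizer of \eqref{eq:Dykstra-min-subpblm} lying in $\dom h_{\alpha}^{*}$; these are equivalent observations in this setting.
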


\begin{proof}
The result for $z_{i}^{n,w}$ holds for $n=1$ and $w=0$. Claim \ref{claim:Fenchel-duality}(b)
shows that $z_{i}^{n,w}\in\partial\mathbf{f}_{i}(x^{n,w})$ for all
$i\in S_{n,w}$. Note that since $[\mathbf{f}_{i}(x)]_{j}=0$ for
all $j\in\mathcal{V}\backslash\{i\}$, $[\partial\mathbf{f}_{i}(x)]_{j}=0$
for all $j\in\mathcal{V}\backslash\{i\}$, which easily gives what
we need.

For all $(i,j)\in\mathcal{E}$ and $n\geq1$, the line \eqref{eq:reset-z-i-j-2}
implies that $z_{(i,j)}^{n,0}\in H_{(i,j)}^{\perp}$, and Claim \ref{claim:Fenchel-duality}(b)
implies that $z_{(i,j)}^{n,w}\in H_{(i,j)}^{\perp}$ for all $w\in\{1,\dots,\bar{w}\}$.
This implies the result at hand for $z_{(i,j)}^{n,w}$. 
\end{proof}
Dykstra's algorithm is traditionally written in terms of solving for
the primal variable $x$. For completeness, we show the equivalence
between \eqref{eq:Dykstra-min-subpblm} and the primal minimization
problem. The proof is easily extended from \cite[Proposition 2.4]{Pang_Dyk_spl}
(The duality between \eqref{eq:Dykstra-min-subpblm} and \eqref{eq:primal-subpblm}
can also be obtained by Fenchel duality.) 
\begin{proposition}
(On solving \eqref{eq:Dykstra-min-subpblm}) If a minimizer $\{z_{\alpha}^{n,w}\}_{\alpha\in S_{n,w}}$
for \eqref{eq:Dykstra-min-subpblm} exists, then the $x^{n,w}$ in
\eqref{eq:x-from-v-A} satisfies 
\begin{equation}
x^{n,w}=\begin{array}{c}
\underset{x\in X^{|\mathcal{V}|}}{\arg\min}\underset{\alpha\in S_{n,w}}{\sum}h_{\alpha}(x)+\frac{1}{2}\left\Vert x-\left(x_{0}-\underset{\alpha\notin S_{n,w}}{\sum}z_{\alpha}^{n,w}\right)\right\Vert ^{2}.\end{array}\label{eq:primal-subpblm}
\end{equation}
Conversely, if $x^{n,w}$ solves \eqref{eq:primal-subpblm} with the
dual variables $\{\tilde{z}_{\alpha}^{n,w}\}_{\alpha\in S_{n,w}}$
satisfying 
\begin{equation}
\begin{array}{c}
\tilde{z}_{\alpha}^{n,w}\in\partial h_{\alpha}(x^{n,w})\mbox{ and }x^{n,w}-x_{0}+\underset{\alpha\notin S_{n,w}}{\overset{\phantom{\alpha\notin S_{n,w}}}{\sum}}z_{\alpha}^{n,w}+\underset{\alpha\in S_{n,w}}{\sum}\tilde{z}_{\alpha}^{n,w}=0,\end{array}\label{eq:primal-optim-cond}
\end{equation}
then $\{\tilde{z}_{\alpha}^{n,w}\}_{\alpha\in S_{n,w}}$ solves \eqref{eq:Dykstra-min-subpblm}. 
\end{proposition}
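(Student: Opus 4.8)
The plan is to treat \eqref{eq:Dykstra-min-subpblm} and \eqref{eq:primal-subpblm} as a Fenchel primal--dual pair for the ``active'' variables indexed by $S_{n,w}$, with the inactive variables $\{z_{\alpha}^{n,w}\}_{\alpha\notin S_{n,w}}$ held fixed, and to verify that $x^{n,w}$ together with $\{z_{\alpha}^{n,w}\}_{\alpha\in S_{n,w}}$ simultaneously certifies optimality in both problems. The whole argument hinges on the single identity $x^{n,w}=x_{0}-\sum_{\alpha\in\mathcal{E}\cup\mathcal{V}}z_{\alpha}^{n,w}$, which is just a rewriting of the definitions in \eqref{eq_m:all_acronyms}, together with the conjugate subgradient inversion $z\in\partial h_{\alpha}(x)\iff x\in\partial h_{\alpha}^{*}(z)$ for closed convex $h_{\alpha}$ (the equivalence already recorded in Claim \ref{claim:Fenchel-duality}). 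I would write $c:=x_{0}-\sum_{\alpha\notin S_{n,w}}z_{\alpha}^{n,w}$, noting that $z_{\alpha}^{n,w}=z_{\alpha}^{n,w-1}$ for $\alpha\notin S_{n,w}$ by line 08, so $c$ is exactly the constant appearing in both \eqref{eq:Dykstra-min-subpblm} and \eqref{eq:primal-subpblm}; the identity above then reads $c-\sum_{\alpha\in S_{n,w}}z_{\alpha}^{n,w}=x^{n,w}$.

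For the forward direction I would start from the assumption that $\{z_{\alpha}^{n,w}\}_{\alpha\in S_{n,w}}$ minimizes \eqref{eq:Dykstra-min-subpblm}. Its first order condition in each $z_{\beta}$ reads $0\in-(c-\sum_{\alpha\in S_{n,w}}z_{\alpha}^{n,w})+\partial h_{\beta}^{*}(z_{\beta}^{n,w})$, i.e.\ $x^{n,w}\in\partial h_{\beta}^{*}(z_{\beta}^{n,w})$, which is precisely Claim \ref{claim:Fenchel-duality}(a) and is equivalent to $z_{\beta}^{n,w}\in\partial h_{\beta}(x^{n,w})$ by part (b). I would then exhibit $x^{n,w}$ as the minimizer of \eqref{eq:primal-subpblm} by verifying the sufficient optimality condition directly: from $z_{\beta}^{n,w}\in\partial h_{\beta}(x^{n,w})$ we get $\sum_{\beta\in S_{n,w}}z_{\beta}^{n,w}\in\partial\big(\sum_{\beta\in S_{n,w}}h_{\beta}\big)(x^{n,w})$ (the easy inclusion, requiring no constraint qualification), while the gradient of the quadratic term at $x^{n,w}$ is $x^{n,w}-c=-\sum_{\beta\in S_{n,w}}z_{\beta}^{n,w}$ by the boxed identity. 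Adding these yields $0$ in the subdifferential of the objective of \eqref{eq:primal-subpblm}, so $x^{n,w}$ is a global minimizer, and strong convexity of \eqref{eq:primal-subpblm} makes it the unique one, which is what \eqref{eq:primal-subpblm} asserts.

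For the converse I would assume $x^{n,w}$ solves \eqref{eq:primal-subpblm} with multipliers $\{\tilde{z}_{\alpha}^{n,w}\}_{\alpha\in S_{n,w}}$ satisfying \eqref{eq:primal-optim-cond} and check that these multipliers satisfy the first order condition for \eqref{eq:Dykstra-min-subpblm}. The linear equation in \eqref{eq:primal-optim-cond} rearranges to $c-\sum_{\alpha\in S_{n,w}}\tilde{z}_{\alpha}^{n,w}=x^{n,w}$, and the inclusion $\tilde{z}_{\alpha}^{n,w}\in\partial h_{\alpha}(x^{n,w})$ inverts to $x^{n,w}\in\partial h_{\alpha}^{*}(\tilde{z}_{\alpha}^{n,w})$; combining them gives $0\in-(c-\sum_{\alpha\in S_{n,w}}\tilde{z}_{\alpha}^{n,w})+\partial h_{\alpha}^{*}(\tilde{z}_{\alpha}^{n,w})$ for every $\alpha\in S_{n,w}$, which is exactly the (sufficient, by convexity) optimality condition for \eqref{eq:Dykstra-min-subpblm}. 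Hence $\{\tilde{z}_{\alpha}^{n,w}\}_{\alpha\in S_{n,w}}$ solves \eqref{eq:Dykstra-min-subpblm}.

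I do not expect a genuine obstacle: the content is the standard Fenchel correspondence between a sum-of-functions proximal problem and its Dykstra dual, and the argument is a short extension of \cite[Proposition 2.4]{Pang_Dyk_spl}. The only points requiring care are bookkeeping ones, namely keeping the constant $c$ and the frozen variables $z_{\alpha}^{n,w}=z_{\alpha}^{n,w-1}$ straight, and invoking only the \emph{sufficiency} half of the subdifferential optimality conditions so that no constraint qualification on $\sum_{\alpha\in S_{n,w}}h_{\alpha}$ is needed --- consistent with the paper's emphasis on avoiding constraint qualifications.
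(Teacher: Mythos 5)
Your argument is correct and is essentially the proof the paper intends: the paper itself omits the details, deferring to \cite[Proposition 2.4]{Pang_Dyk_spl} and remarking that the equivalence follows from Fenchel duality, which is exactly the subgradient-inversion $z\in\partial h_{\alpha}(x)\iff x\in\partial h_{\alpha}^{*}(z)$ argument you carry out, anchored on the identity $x^{n,w}=x_{0}-\sum_{\alpha}z_{\alpha}^{n,w}$ from \eqref{eq_m:all_acronyms}. Your bookkeeping of the frozen variables via line 08 and your restriction to the sufficiency half of the optimality conditions (so no constraint qualification is invoked) are both consistent with the paper's setup.
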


\subsection{Examples of $S_{n,w}$}

In this subsection, we elaborate on how to solve \eqref{eq:primal-subpblm},
and show that Algorithm \ref{alg:Ext-Dyk} is an extension of the
average consensus algorithm.

For an $S_{n,w}$ such that $S_{n,w}\cap\mathcal{E}\neq\emptyset$,
define $\mathcal{V}'_{n,w}$ by 
\begin{equation}
\mathcal{V}'_{n,w}=\{\mbox{all vertices that are endpoints of some edge in }S_{n,w}\cap\mathcal{E}\}.\label{eq:V-prime}
\end{equation}
Suppose $S_{n,w}\cap\mathcal{E}$ is such that the subgraph $(\mathcal{V}'_{n,w},S_{n,w}\cap\mathcal{E})$
is a connected graph with no cycles, and $S_{n,w}\cap\mathcal{V}\subset\mathcal{V}_{n,w}'$.
Let $\tilde{y}\in X^{|\mathcal{V}|}$ be defined by 
\begin{equation}
\tilde{y}:=x_{0}-\sum_{\alpha\notin S_{n,w}}z_{\alpha}^{n,w}\overset{\scriptsize{\mbox{line 8}}}{=}x_{0}-\sum_{\alpha\notin S_{n,w}}z_{\alpha}^{n,w-1}\overset{\eqref{eq_m:all_acronyms}}{=}x^{n,w-1}+\sum_{\alpha\in S_{n,w}}z_{\alpha}^{n,w-1}.\label{eq:tilde-y}
\end{equation}
Then the primal minimization problem \eqref{eq:primal-subpblm} becomes
\begin{eqnarray}
x^{n,w} & = & \begin{array}{c}
\underset{x\in X^{|\mathcal{V}|}}{\arg\min}\underset{i\in S_{n,w}\cap\mathcal{V}}{\sum}h_{i}(x)+\underset{(i,j)\in S_{n,w}\cap\mathcal{E}}{\sum}h_{(i,j)}(x)\end{array}\nonumber \\
 &  & \begin{array}{c}
\qquad+\frac{1}{2}\left\Vert x-\left(x_{0}-\underset{\alpha\notin S_{n,w}}{\sum}z_{\alpha}^{n,w}\right)\right\Vert ^{2}\end{array}\nonumber \\
 & \overset{\eqref{eq:tilde-y}}{=} & \begin{array}{c}
\underset{x\in X^{|\mathcal{V}|}}{\arg\min}\underset{i\in S_{n,w}\cap\mathcal{V}}{\overset{\phantom{i\in S_{n,w}\cap\mathcal{V}}}{\sum}}h_{i}(x)+\underset{(i,j)\in S_{n,w}\cap\mathcal{E}}{\sum}h_{(i,j)}(x)+\frac{1}{2}\left\Vert x-\tilde{y}\right\Vert ^{2}.\end{array}\label{eq:primal-subpblm-special-case}
\end{eqnarray}
Recall that $h_{i}:X^{|\mathcal{V}|}\to\mathbb{R}$ is a function
whose output depends only on the $i$-th coordinate, where $i\in\mathcal{V}$.
If $x^{n,w}$ were to solve \eqref{eq:primal-subpblm-special-case},
then $h_{(i,j)}(x^{n,w})=\delta_{H_{(i,j)}}(x^{n,w})$ is finite for
all $(i,j)\in S_{n,w}\cap\mathcal{E}$, which shows that $x^{n,w}\in H_{(i,j)}$
for all $(i,j)\in S_{n,w}\cap\mathcal{E}$. This in turn means that
all the components of $x^{n,w}$ indexed by $\mathcal{V}'_{n,w}$
would need to have the same value. So the problem \eqref{eq:primal-subpblm-special-case}
can be reduced to one which optimizes over a variable in $X$ (instead
of $X^{|\mathcal{V}|}$), which, for all $i'\in\mathcal{V}'_{n,w}$,
takes the form 
\begin{eqnarray}
x_{i'}^{n,w} & \overset{\eqref{eq:primal-subpblm-special-case}}{=} & \underset{x\in X}{\arg\min}\sum_{i\in S_{n,w}\cap\mathcal{V}}f_{i}(x)+\frac{1}{2}\sum_{i\in\mathcal{V}'_{n,w}}\|x-\tilde{y}_{i}\|^{2}\label{eq:updated-x-n-w}\\
 & = & \underset{x\in X}{\arg\min}\sum_{i\in S_{n,w}\cap\mathcal{V}}f_{i}(x)+\frac{|\mathcal{V}'_{n,w}|}{2}\left\Vert x-\frac{1}{|\mathcal{V}'_{n,w}|}\sum_{i\in\mathcal{V}'_{n,w}}\tilde{y}_{i}\right\Vert ^{2}\nonumber 
\end{eqnarray}
where $f_{i}:X\to\bar{\mathbb{R}}$ is defined as in Problem \ref{prob:prob-statement}.
The iterate $x^{n,w}\in X^{|\mathcal{V}|}$ can be expressed in terms
of $x^{n,w-1}$ via 
\begin{equation}
x_{i'}^{n,w}=\begin{cases}
x_{i'}^{n,w-1} & \mbox{ if }i'\notin\mathcal{V}'_{n,w}\\
\mbox{The formula in \eqref{eq:updated-x-n-w}} & \mbox{ if }i'\in\mathcal{V}'_{n,w}.
\end{cases}\label{eq:update-all-x-n-w}
\end{equation}

\begin{remark}

(The case $S_{n,w}\cap\mathcal{V}=\emptyset$) A notable case is when
$S_{n,w}\cap\mathcal{V}=\emptyset$ and $|S_{n,w}\cap\mathcal{E}|=1$.
Let $(i,j)$ be the element in $S_{n,w}\cap\mathcal{E}$. Then one
can calculate from \eqref{eq:update-all-x-n-w} that $x_{i}^{n,w}=x_{j}^{n,w}=\frac{1}{2}(x_{i}^{n,w-1}+x_{j}^{n,w-1})$,
and the other $|\mathcal{V}|$ components of $x^{n,w}$ remain unchanged
from $x^{n,w-1}$. If $f_{i}(\cdot)\equiv0$ for all $i\in\mathcal{V}$
and the edges are chosen over the graph $(\mathcal{V},\mathcal{E})$,
we reduce to the case of averaged consensus studied in \cite{Boyd_distrib_averaging,Distrib_averaging_Dimakis_Kar_Moura_Rabbat_Scaglione}.

\end{remark}

\subsection{Simplification of Algorithm \ref{alg:Ext-Dyk} and further remarks}

We first remark that there is no need to track $\{z_{(i,j)}^{n,w-1}\}_{(i,j)\in\mathcal{E}}$
throughout the algorithm, and we only need to keep track of $\{z_{i}^{n,w-1}\}_{i\in\mathcal{V}}$
and $x^{n,w-1}$. We make a few more remarks about Algorithm \ref{alg:Ext-Dyk}. 

\begin{remark}(Irrelevance of $z_{(i,j)}^{n,w-1}$) \label{rem:irrelevance-of-z-i-j}A
first observation of the dual objective function is that as long as
$z_{(i,j)}\in H_{(i,j)}^{\perp}$, we have $\delta_{H_{(i,j)}}^{*}(z_{(i,j)})=\delta_{H_{(i,j)}^{\perp}}(z_{(i,j)})=0$.
Since 
\[
-\frac{1}{2}\left\Vert x_{0}-\sum_{\alpha\in\mathcal{E}\cup\mathcal{V}}z_{\alpha}^{n,w}\right\Vert ^{2}\overset{\eqref{eq:v-H-def}}{=}-\frac{1}{2}\left\Vert x_{0}-v_{H}^{n,w}-\sum_{\alpha\in\mathcal{V}}z_{\alpha}^{n,w}\right\Vert ^{2},
\]
the dual objective function \eqref{eq:Dykstra-dual-defn} thus does
not depend directly on each $\{z_{(i,j)}\}_{(i,j)\in\mathcal{E}}$,
but rather through the sum $v_{H}:=\sum_{(i,j)\in\mathcal{E}}z_{(i,j)}$
that appears in the quadratic term in \eqref{eq:Dykstra-min-subpblm}.
Next, in calculating $\frac{1}{|\mathcal{V}'_{n,w}|}\sum_{i\in\mathcal{V}'_{n,w}}\tilde{y}_{i}$
in \eqref{eq:updated-x-n-w}, we note that since $z_{(i,j)}^{n,w-1}\in H_{(i,j)}^{\perp}\subset D^{\perp}$,
$\sum_{i'\in\mathcal{V}}[z_{(i,j)}^{n,w-1}]_{i'}\overset{\eqref{eq:D-and-D-perp}}{=}0$.
Also, by Proposition \ref{prop:sparsity}, if $\alpha\in S_{n,w}\cap\mathcal{E}$,
then $[z_{\alpha}^{n,w-1}]_{i}=0$ if $i\in\mathcal{V}\backslash\mathcal{V}_{n,w}'$.
This means that 
\begin{eqnarray}
 &  & \sum_{i\in\mathcal{V}'_{n,w}}\tilde{y}_{i}\overset{\eqref{eq:tilde-y}}{=}\sum_{i\in\mathcal{V}'_{n,w}}\left[x^{n,w-1}+\sum_{\alpha\in S_{n,w}}z_{\alpha}^{n,w-1}\right]_{i}\label{eq:sum-tilde-y}\\
 & = & \sum_{i\in\mathcal{V}'_{n,w}}x_{i}^{n,w-1}+\sum_{i\in\mathcal{V}'_{n,w}}\sum_{\alpha\in S_{n,w}\cap\mathcal{V}}[z_{\alpha}^{n,w-1}]_{i}+\sum_{i\in\mathcal{V}'_{n,w}}\sum_{\alpha\in S_{n,w}\cap\mathcal{E}}[z_{\alpha}^{n,w-1}]_{i}\nonumber \\
 & = & {\normalcolor \sum_{i\in\mathcal{V}'_{n,w}}x_{i}^{n,w-1}+\sum_{i\in\mathcal{V}'_{n,w}}\sum_{\alpha\in S_{n,w}\cap\mathcal{V}}[z_{\alpha}^{n,w-1}]_{i}+\sum_{i\in\mathcal{V}}\sum_{\alpha\in S_{n,w}\cap\mathcal{E}}[z_{\alpha}^{n,w-1}]_{i}}\nonumber \\
 & \overset{\eqref{eq:D-and-D-perp}}{=} & \sum_{i\in\mathcal{V}'_{n,w}}x_{i}^{n,w-1}+\sum_{i\in\mathcal{V}'_{n,w}}\sum_{\alpha\in S_{n,w}\cap\mathcal{V}}[z_{\alpha}^{n,w-1}]_{i}.\nonumber 
\end{eqnarray}
So one only needs to keep track of $x^{n,w}$ and $\{z_{i}^{n,w}\}_{i\in\mathcal{V}}$
in Algorithm \ref{alg:Ext-Dyk}, and there is no need to keep track
of $\{z_{\alpha}^{n,w}\}_{\alpha\in\mathcal{E}}$. This justifies
why we can have the step of reassigning $z_{(i,j)}^{n,0}$ in line
4, and Algorithm \ref{alg:Ext-Dyk} could have been stated in terms
of $v_{H}$ only, and not $\{z_{(i,j)}\}_{(i,j)\in\mathcal{E}}$.
The reason why we need to introduce the variables $\{z_{(i,j)}\}_{(i,j)\in\mathcal{E}}$
is so that the analysis in \eqref{eq:biggest-formula} can be carried
through. 

\end{remark}

In view of Remark \ref{rem:irrelevance-of-z-i-j}, Algorithm \ref{alg:Ext-Dyk}
can thus be simplified to Algorithm \ref{alg:Ext-Dyk-1} without
the terms $\{z_{(i,j)}\}_{(i,j)\in\mathcal{E}}$. Furthermore, if
$f_{i}(\cdot)\equiv0$ for all $i\in V$, the variables $z_{i}^{n,w}$
would always be zero, and Algorithm \ref{alg:Ext-Dyk-1} reduces to
the well known averaged consensus problem \cite{Boyd_distrib_averaging,Distrib_averaging_Dimakis_Kar_Moura_Rabbat_Scaglione}. 

\begin{algorithm}[h]
\caption{Decentralized Dykstra's algorithm simplified}
\label{alg:Ext-Dyk-1}
Consider the problem \eqref{eq:Dyk-primal} along with the associated
dual problem \eqref{eq:dual-fn}. We only keep track of $\{z_{i}^{n,w}\}_{i\in\mathcal{V}}$
and $x^{n,w}$, and these iterates are equivalent to that of Algorithm
\ref{alg:Ext-Dyk} by Remark \ref{rem:irrelevance-of-z-i-j}.

Let $\bar{w}$ be a positive integer. Our decentralized Dykstra's
algorithm is as follows:

01$\quad$Let 

\begin{itemize}
\item $z_{i}^{1,0}\in X^{|\mathcal{V}|}$ be a starting dual vector for
$\mathbf{f}_{i}(\cdot)$ for each $i\in\mathcal{V}$ so that $[z_{i}^{1,0}]_{j}=0$
for all $j\in\mathcal{V}\backslash\{i\}$. 
\begin{itemize}
\item $v_{H}^{1,0}\in D^{\perp}$ be a starting dual vector for (1.7).
\item Let $x^{1,0}$ be $x^{1,0}=x_{0}-v_{H}^{1,0}-\sum_{i\in\mathcal{V}}z_{i}^{1,0}$.
\end{itemize}
\end{itemize}
02$\quad$For $n=1,2,\dots$

03$\quad$$\quad$Let $\mathcal{E}_{n}\subset\mathcal{E}$ be such
that $\mathcal{E}_{n}$ connects $\mathcal{V}$. 

05$\quad$$\quad$For $w=1,2,\dots,\bar{w}$

06$\quad$$\quad$$\quad$Choose a set $S_{n,w}\subset\mathcal{E}_{n}\cup\mathcal{V}$
such that $S_{n,w}\neq\emptyset$

$\phantom{\mbox{06}}$$\quad$$\quad$$\quad\qquad$and $S_{n,w}\cap\mathcal{V}\subset\mathcal{V}_{n,w}'$
for $\mathcal{V}'_{n,w}$ as defined in \eqref{eq:V-prime}.

07$\quad$$\quad$$\quad$Define $x^{n,w}\in X^{|\mathcal{V}|}$ by
\begin{equation}
x_{i'}^{n,w}\overset{\eqref{eq:update-all-x-n-w}}{=}\begin{cases}
x_{i'}^{n,w-1} & \mbox{ if }i'\notin\mathcal{V}'_{n,w}\\
\underset{x\in X}{\arg\min}\sum_{i\in S_{n,w}\cap\mathcal{V}}f_{i}(x)+\frac{|\mathcal{V}'_{n,w}|}{2}\left\Vert x-\frac{1}{|\mathcal{V}'_{n,w}|}\sum_{i\in\mathcal{V}'_{n,w}}\tilde{y}_{i}\right\Vert ^{2} & \mbox{ if }i'\in\mathcal{V}'_{n,w}.
\end{cases}\label{eq:update-all-x-n-w-1}
\end{equation}
where $\sum_{i\in\mathcal{V}'_{n,w}}\tilde{y}_{i}$ has the form in
\eqref{eq:sum-tilde-y}, which does not depend on $\{z_{(i,j)}^{n,w-1}\}_{(i,j)\in\mathcal{E}}$.
Let $\{z_{i}^{n,w}\}_{i\in S_{n,w}\cap\mathcal{V}}$ be such that
$[z_{i}^{n,w}]_{i}$ is the subgradient of $f_{i}(\cdot)$ at $x_{i'}^{n,w}$
that certifies the optimality in \eqref{eq:update-all-x-n-w-1}.

08$\quad$$\quad$$\quad$Set $z_{\alpha}^{n,w}:=z_{\alpha}^{n,w-1}$
for all $\alpha\notin S_{n,w}\cap\mathcal{V}$.

09$\quad$$\quad$End For 

10$\quad$$\quad$Let $z_{i}^{n+1,0}=z_{i}^{n,\bar{w}}$ for all $i\in\mathcal{V}$,
and $x^{n+1,0}=x^{n,\bar{w}}$.

11$\quad$End For 
\end{algorithm}
\begin{remark}

\label{rem:distrib-comp} (Distributed asynchronous computation) Proposition
\ref{prop:sparsity} shows that the storage requirement for each vertex
and edge is small. Suppose $S_{n,w}$ and $S_{n,w+1}$ are such that
$S_{n,w}\cap\mathcal{V}\subset\mathcal{V}'_{n,w}$, $S_{n,w+1}\cap\mathcal{V}\subset\mathcal{V}_{n,w+1}'$
and $\mathcal{V}_{n,w}'\cap\mathcal{V}_{n,w+1}'=\emptyset$. Then
the computations in for the iterations $(n,w)$ and $(n,w+1)$ can
be conducted in parallel. This is because calculations for $S_{n,w}$
in \eqref{eq:Dykstra-min-subpblm} only uses and affects the coordinates
of $\{z_{\alpha}\}_{\alpha\in\mathcal{E}\cup\mathcal{V}}$ indexed
by $\mathcal{V}'_{n,w}\subset\mathcal{V}$ and the similar thing goes
for $S_{n,w+1}$. This idea can be naturally extended to the case
of $S_{n,w},S_{n,w+1},\dots,S_{n,w+j}$ for any $j\geq1$ to allows
for distributed asynchronous computation.

\end{remark}

\begin{remark}

(Scalability) Algorithm \ref{alg:Ext-Dyk} allows for the size of
the sets $S_{n,w}$ to be arbitrarily large so that there would be
a greedier increase in the dual objective value. One would then expect
faster convergence with larger sizes of $S_{n,w}$. Even though for
this paper, we only cover the case where $|S_{n,w}\cap\mathcal{V}|\leq1$,
the case where $|S_{n,w}\cap\mathcal{V}|>1$ can be analyzed using
the techniques in \cite{Pang_Dyk_spl}, where we split vertices in
$\mathcal{V}$ according to whether $\dom(f_{i})=X$, $f_{i}(\cdot)$
is an indicator function of a closed convex set, or $f_{i}(\cdot)$
is a general closed convex function.

\end{remark}

\begin{remark}

\label{rem:time-vary} (Time-varying graphs) Note that in line 5 of
Algorithm \ref{alg:Ext-Dyk}, we only need to choose $\mathcal{E}_{n}\subset\mathcal{E}$
so that $\mathcal{E}_{n}$ connects $\mathcal{V}$. As long as $\mathcal{E}_{n}=[\cup_{w=1}^{\bar{w}}S_{n,w}]\cap\mathcal{E}$,
the convergence result in Theorem \ref{thm:convergence} holds. So
as long as enough edges are chosen in each cycle to connect the graph,
Algorithm \ref{alg:Ext-Dyk} would converge. In \cite{Nedich_Olshevsky},
they used the term uniformly strongly connectedness or $B$-strongly
connectedness for time-varying directed graphs. Our assumption is
equivalent to how $B$-connectedness would have been defined for undirected
graphs. 

\end{remark}

\subsection{Convergence of Algorithm \ref{alg:Ext-Dyk}}

We state some notation necessary for further discussions. For any
$\alpha\in\mathcal{E}\cup\mathcal{V}$ and $n\in\{1,2,\dots\}$, let
$p(n,\alpha)$ be 
\[
p(n,\alpha)=\max\{m:m\leq\bar{w},\alpha\in S_{n,m}\}.
\]
In other words, $p(n,\alpha)$ is the index $m$ such that $\alpha\in S_{n,m}$
but $\alpha\notin S_{n,k}$ for all $k\in\{m+1,\dots,\bar{w}\}$.
It follows from line 8 in Algorithm \ref{alg:Ext-Dyk} that 
\begin{equation}
z_{\alpha}^{n,p(n,\alpha)}=z_{\alpha}^{n,p(n,\alpha)+1}=\cdots=z_{\alpha}^{n,\bar{w}}.\label{eq:stagnant-indices}
\end{equation}
Moreover, $(i,j)\notin\mathcal{E}_{n}$ implies $(i,j)\notin S_{n,w}$
for all $w\in\{1,\dots,\bar{w}\}$, so 
\begin{equation}
0\overset{\scriptsize\eqref{eq:reset-z-i-j-1}}{=}z_{(i,j)}^{n,0}=z_{(i,j)}^{n,1}=\cdots=z_{(i,j)}^{n,\bar{w}}\mbox{ for all }(i,j)\notin\mathcal{E}_{n}.\label{eq:zero-indices}
\end{equation}

We have the following theorem on the convergence of Algorithm \ref{alg:Ext-Dyk}.
\begin{theorem}
\label{thm:convergence} (Convergence to primal minimizer) Consider
Algorithm \ref{alg:Ext-Dyk}. Assume that for all $n\geq1$, $\mathcal{E}_{n}=[\cup_{w=1}^{\bar{w}}S_{n,w}]\cap\mathcal{E}$,
and $[\cup_{w=1}^{\bar{w}}S_{n,w}]\supset\mathcal{V}$. Assume that
there are constants $A$ and $B$ such that 
\begin{equation}
\sum_{\alpha\in\mathcal{E}\cup\mathcal{V}}\|z_{\alpha}^{n,\bar{w}}\|\leq A\sqrt{n}+B\mbox{ for all }n\geq0.\label{eq:sqrt-growth-sum-z}
\end{equation}

For the sequence $\{z_{\alpha}^{n,w}\}_{{1\leq n<\infty\atop 0\leq w\leq\bar{w}}}\subset X^{|\mathcal{V}|}$
for each $\alpha\in\mathcal{E}\cup\mathcal{V}$ generated by Algorithm
\ref{alg:Ext-Dyk} and the sequences $\{v_{H}^{n,w}\}_{{1\leq n<\infty\atop 0\leq w\leq\bar{w}}}\subset X^{|\mathcal{V}|}$
and $\{v_{A}^{n,w}\}_{{1\leq n<\infty\atop 0\leq w\leq\bar{w}}}\subset X^{|\mathcal{V}|}$
thus derived, we have:

\begin{enumerate}
\item [(i)]The sum $\sum_{n=1}^{\infty}\sum_{w=1}^{\bar{w}}\|v_{A}^{n,w}-v_{A}^{n,w-1}\|^{2}$
is finite and $\{F(\{z_{\alpha}^{n,\bar{w}}\}_{\alpha\in\mathcal{E}\cup\mathcal{V}})\}_{n=1}^{\infty}$
is nondecreasing.
\item [(ii)]There is a constant $C$ such that $\|v_{A}^{n,w}\|^{2}\leq C$
for all $n\in\mathbb{N}$ and $w\in\{1,\dots,\bar{w}\}$. 
\item [(iii)]There exists a subsequence $\{v_{A}^{n_{k},\bar{w}}\}_{k=1}^{\infty}$
of $\{v_{A}^{n,\bar{w}}\}_{n=1}^{\infty}$ which converges to some
$v_{A}^{*}\in X^{|\mathcal{V}|}$ and that 
\[
\lim_{k\to\infty}\langle v_{A}^{n_{k},\bar{w}}-v_{A}^{n_{k},p(n_{k},\alpha)},z_{\alpha}^{n_{k},\bar{w}}\rangle=0\mbox{ for all }\alpha\in\mathcal{E}\cup\mathcal{V}.
\]
\item [(iv)]For the $v_{A}^{*}$ in (iii), $x_{0}-v_{A}^{*}$ is the minimizer
of the primal problem (P) and we have $\lim_{k\to\infty}F(\{z_{\alpha}^{n_{k},w}\}_{\alpha\in\mathcal{E}\cup\mathcal{V}})=\frac{1}{2}\|v_{A}^{*}\|^{2}+h(x_{0}-v_{A}^{*})$,
where $h(\cdot)=\sum_{\alpha\in\mathcal{E}\cup\mathcal{V}}h_{\alpha}(\cdot)$. 
\end{enumerate}
The properties (i) to (iv) in turn imply that $\lim_{n\to\infty}x^{n,\bar{w}}$
exists and equals $x_{0}-v_{A}^{*}$, which is the primal minimizer
of \eqref{eq:Dyk-primal}.
\end{theorem}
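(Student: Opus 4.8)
The plan is to follow the Boyle--Dykstra architecture, but carried out in terms of the \emph{aggregated} dual variable $v_{A}$ rather than the individual $z_{\alpha}$, proving (i)--(iv) as successive refinements and then closing with a strong-concavity squeeze.

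\emph{Part (i).} I would first prove a sufficient-increase inequality for each block step \eqref{eq:Dykstra-min-subpblm}. The only way $v_{A}$ changes when the block $S_{n,w}$ is optimized is through $s:=\sum_{\alpha\in S_{n,w}}z_{\alpha}$, and minimizing $-F$ over the block is minimizing $\frac{1}{2}\|c-s\|^{2}+\sum_{\alpha\in S_{n,w}}h_{\alpha}^{*}(z_{\alpha})$ with $c$ fixed. Restricting $F$ to the segment from $\{z_{\alpha}^{n,w-1}\}$ to the block maximizer $\{z_{\alpha}^{n,w}\}$, splitting off the quadratic whose curvature in $s$ is exactly $1$, and using that $t=1$ maximizes the $F$-value along this segment, a short concavity argument gives $F^{n,w}-F^{n,w-1}\geq\frac{1}{2}\|v_{A}^{n,w}-v_{A}^{n,w-1}\|^{2}$ (recall $\|v_{A}^{n,w}-v_{A}^{n,w-1}\|=\|x^{n,w}-x^{n,w-1}\|=\|\Delta s\|$). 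The line-4 reset leaves $F$ unchanged, since $F$ depends on the $z_{(i,j)}$ only through $v_{H}$ and the vanishing terms $\delta_{H_{(i,j)}}^{*}$, so $F^{n,\bar w}=F^{n+1,0}$ and $\{F^{n,\bar w}\}$ is nondecreasing. Weak duality, the chain \eqref{eq:From-8}, bounds $F$ above by the primal value $P^{*}$ of \eqref{eq:Dyk-primal}, so the increments are summable, proving (i).

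\emph{Part (ii).} I would introduce the reduced dual $G(v):=-\frac{1}{2}\|x_{0}-v\|^{2}+\frac{1}{2}\|x_{0}\|^{2}-h^{*}(v)$ with $h^{*}=(\sum_{\alpha}h_{\alpha})^{*}$. Since $h^{*}(v)\leq\sum_{\alpha}h_{\alpha}^{*}(z_{\alpha})$ whenever $\sum_{\alpha}z_{\alpha}=v$, one has $F(\{z_{\alpha}\})\leq G(\sum_{\alpha}z_{\alpha})$; and Fenchel--Rockafellar duality applied to the splitting of \eqref{eq:Dyk-primal} into its quadratic term and $h=\sum_{\alpha}h_{\alpha}$ (the quadratic is finite and continuous everywhere, so the qualification is automatic and the primal minimum is attained) gives $\max_{v}G(v)=P^{*}$, attained at a unique $v^{\circ}$ because $G$ is $1$-strongly concave. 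Strong concavity then yields $\frac{1}{2}\|v_{A}^{n,w}-v^{\circ}\|^{2}\leq G(v^{\circ})-G(v_{A}^{n,w})\leq P^{*}-F^{n,w}\leq P^{*}-F^{1,0}$, which is the uniform bound (ii); note this step does not use \eqref{eq:sqrt-growth-sum-z}.

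\emph{Part (iii) --- the crux.} Writing $\epsilon_{n}:=\sum_{w=1}^{\bar w}\|v_{A}^{n,w}-v_{A}^{n,w-1}\|^{2}$, part (i) gives $\sum_{n}\epsilon_{n}<\infty$, hence $\liminf_{n}n\epsilon_{n}=0$, so I can choose indices $n_{k}$ with $n_{k}\epsilon_{n_{k}}\to0$, refined by (ii) so that $v_{A}^{n_{k},\bar w}\to v_{A}^{*}$. Cauchy--Schwarz gives $\|v_{A}^{n,\bar w}-v_{A}^{n,p(n,\alpha)}\|\leq\sqrt{\bar w\,\epsilon_{n}}$, while \eqref{eq:sqrt-growth-sum-z} gives $\|z_{\alpha}^{n,\bar w}\|\leq A\sqrt{n}+B$, so $|\langle v_{A}^{n_{k},\bar w}-v_{A}^{n_{k},p(n_{k},\alpha)},z_{\alpha}^{n_{k},\bar w}\rangle|\leq\sqrt{\bar w}\,(A\sqrt{n_{k}\epsilon_{n_{k}}}+B\sqrt{\epsilon_{n_{k}}})\to0$, which is (iii). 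This is the step I expect to be the main obstacle: it is exactly here that the $\sqrt{n}$ calibration of the growth hypothesis is essential, because summability alone only forces $\|v_{A}^{n,\bar w}-v_{A}^{n,p(n,\alpha)}\|\to0$ and cannot by itself tame a dual sequence $\{z_{\alpha}^{n,\bar w}\}$ that diverges (the no-dual-optimizer case).

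\emph{Part (iv) and the conclusion.} Using Claim \ref{claim:Fenchel-duality}(c) at the last update index $p(n,\alpha)$, where $z_{\alpha}^{n,\bar w}=z_{\alpha}^{n,p(n,\alpha)}$ by \eqref{eq:stagnant-indices}, to substitute each $h_{\alpha}^{*}(z_{\alpha}^{n,\bar w})$, and then writing $v_{A}^{n,p(n,\alpha)}=v_{A}^{n,\bar w}-(v_{A}^{n,\bar w}-v_{A}^{n,p(n,\alpha)})$, I obtain the master identity
\[
F^{n,\bar w}=\frac{1}{2}\|v_{A}^{n,\bar w}\|^{2}+\sum_{\alpha}h_{\alpha}\big(x^{n,p(n,\alpha)}\big)-\sum_{\alpha}\big\langle v_{A}^{n,\bar w}-v_{A}^{n,p(n,\alpha)},\,z_{\alpha}^{n,\bar w}\big\rangle .
\]
Along $n_{k}$ the last sum vanishes by (iii), $\frac{1}{2}\|v_{A}^{n_{k},\bar w}\|^{2}\to\frac{1}{2}\|v_{A}^{*}\|^{2}$, and $x^{n_{k},p(n_{k},\alpha)}=x_{0}-v_{A}^{n_{k},p(n_{k},\alpha)}\to x_{0}-v_{A}^{*}$ (again because $\|v_{A}^{n,\bar w}-v_{A}^{n,p(n,\alpha)}\|\to0$); lower semicontinuity of each $h_{\alpha}$ then gives $\lim_{k}F^{n_{k},\bar w}\geq\frac{1}{2}\|v_{A}^{*}\|^{2}+h(x_{0}-v_{A}^{*})$. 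As the right-hand side is a value of the primal objective it is $\geq P^{*}$, whereas $\lim_{k}F^{n_{k},\bar w}\leq P^{*}$ by weak duality, so both inequalities are equalities: $x_{0}-v_{A}^{*}$ attains $P^{*}$ and is the primal minimizer, proving (iv). Finally, monotonicity forces the whole sequence $F^{n,\bar w}\uparrow P^{*}$, and the estimate from (ii) gives $\frac{1}{2}\|v_{A}^{n,\bar w}-v^{\circ}\|^{2}\leq P^{*}-F^{n,\bar w}\to0$; comparison with the subsequential limit identifies $v^{\circ}=v_{A}^{*}$, whence $x^{n,\bar w}=x_{0}-v_{A}^{n,\bar w}\to x_{0}-v_{A}^{*}$, the unique primal minimizer of \eqref{eq:Dyk-primal}.
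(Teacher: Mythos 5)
Your parts (i) and (iii) follow the paper's argument essentially verbatim (sufficient increase from the quadratic regularizer, then the $\liminf_n n\epsilon_n=0$ trick combined with the $\sqrt{n}$ growth hypothesis), and your part (ii) and closing squeeze replace the paper's duality-gap chain \eqref{eq:From-8} evaluated at the primal minimizer by the $1$-strong concavity of the reduced dual $G(v)=-\frac{1}{2}\|x_{0}-v\|^{2}+\frac{1}{2}\|x_{0}\|^{2}-h^{*}(v)$; these are equivalent devices (the paper's $\frac{1}{2}\|x_{0}-x^{*}-v_{A}^{n,w}\|^{2}\leq$ gap is exactly your strong-concavity estimate), and your master identity in (iv) is the same computation as the chain \eqref{eq:biggest-formula}, just rearranged.

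The genuine gap is in part (iv), at the step ``lower semicontinuity of each $h_{\alpha}$ then gives $\lim_{k}F^{n_{k},\bar w}\geq\frac{1}{2}\|v_{A}^{*}\|^{2}+h(x_{0}-v_{A}^{*})$.'' Your master identity can only contain the terms $h_{\alpha}(x^{n,p(n,\alpha)})$ for $\alpha\in\mathcal{E}_{n}\cup\mathcal{V}$: for an edge $(i,j)\notin\mathcal{E}_{n}$ the index $p(n,(i,j))$ is undefined and the corresponding contribution to $F$ is just $h_{(i,j)}^{*}(0)=0$, with no matching primal term. Hence lower semicontinuity controls the vertex terms and shows $x_{0}-v_{A}^{*}\in H_{(i,j)}$ only for edges that recur in $\mathcal{E}_{n_{k}}$; it says nothing about the remaining edges, and a priori $h(x_{0}-v_{A}^{*})$ could be $+\infty$, in which case your claimed inequality is false rather than merely unproved. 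The missing ingredient is the standing hypothesis that each $\mathcal{E}_{n}$ connects $\mathcal{V}$, which your proposal never invokes: by Proposition \ref{prop:E-connects-V}, $\cap_{(i,j)\in\mathcal{E}_{n_{k}}}H_{(i,j)}=D\subset H_{(i,j)}$ for every edge of $\mathcal{E}$, so membership in the used hyperplanes upgrades to full feasibility. The paper supplies this via the linear-regularity estimate \eqref{eq:reg-argument} (with the uniform constant $\kappa$ over connected subgraphs); an alternative repair consistent with your setup is to pass to a further subsequence on which $\mathcal{E}_{n_{k}}\equiv\mathcal{E}'$ is constant, conclude $x_{0}-v_{A}^{*}\in\cap_{(i,j)\in\mathcal{E}'}H_{(i,j)}=D$ from closedness of each $H_{(i,j)}$, and only then write $h(x_{0}-v_{A}^{*})=\sum_{i\in\mathcal{V}}h_{i}(x_{0}-v_{A}^{*})$. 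With that insertion your argument closes correctly.
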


\begin{proof}
We first show that (i) to (iv) implies the final assertion. For all
$n\in\mathbb{N}$ we have, from weak duality, 
\begin{equation}
\begin{array}{c}
F(\{z_{\alpha}^{n,\bar{w}}\}_{\alpha\in\mathcal{E}\cup\mathcal{V}})\leq\frac{1}{2}\|x_{0}-(x_{0}-v_{A}^{*})\|^{2}+\underset{\alpha\in\mathcal{E}\cup\mathcal{V}}{\overset{}{\sum}}h_{\alpha}(x_{0}-v_{A}^{*}).\end{array}\label{eq:weak-duality}
\end{equation}
Since the values $\{F(\{z_{\alpha}^{n,\bar{w}}\}_{\alpha\in\mathcal{E}\cup\mathcal{V}})\}_{n=1}^{\infty}$
are nondecreasing in $n$, we make use of (iv) to get 
\[
\begin{array}{c}
\underset{n\to\infty}{\lim}F(\{z_{\alpha}^{n,\bar{w}}\}_{\alpha\in\mathcal{E}\cup\mathcal{V}})=\frac{1}{2}\|x_{0}-(x_{0}-v_{A}^{*})\|^{2}+\underset{\alpha\in\mathcal{E}\cup\mathcal{V}}{\overset{}{\sum}}h_{\alpha}(x_{0}-v_{A}^{*}),\end{array}
\]
Hence $x_{0}-v_{A}^{*}=\arg\min_{x}h(x)+\frac{1}{2}\|x-x_{0}\|^{2}$,
and (substituting $x=x_{0}-v_{A}^{*}$ in \eqref{eq:From-8}) 
\begin{eqnarray*}
 &  & \begin{array}{c}
\frac{1}{2}\|x_{0}-(x_{0}-v_{A}^{*})\|^{2}+h(x_{0}-v_{A}^{*})-F(\{z_{\alpha}^{n,\bar{w}}\}_{\alpha\in\mathcal{E}\cup\mathcal{V}})\end{array}\\
 & \overset{\eqref{eq:From-8},\eqref{eq:v-H-def},\eqref{eq:from-10}}{\geq} & \begin{array}{c}
\frac{1}{2}\|x_{0}-(x_{0}-v_{A}^{*})-v_{A}^{n,\bar{w}}\|^{2}\end{array}\\
 & \overset{\eqref{eq:x-from-v-A}}{=} & \begin{array}{c}
\frac{1}{2}\|x^{n,\bar{w}}-(x_{0}-v_{A}^{*})\|^{2}.\end{array}
\end{eqnarray*}
Hence $\lim_{n\to\infty}x^{n,\bar{w}}$ is the minimizer in (P). 

It remains to prove assertions (i) to (iv).

\textbf{Proof of (i):} From the fact that $\{z_{\alpha}^{n,w}\}_{\alpha\in S_{n,w}}$
minimize \eqref{eq:Dykstra-min-subpblm} (which includes the quadratic
regularizer) we have 
\begin{eqnarray}
F(\{z_{\alpha}^{n,w-1}\}_{\alpha\in\mathcal{E}\cup\mathcal{V}}) & \overset{\eqref{eq:Dykstra-min-subpblm}}{\leq} & \begin{array}{c}
F(\{z_{\alpha}^{n,w}\}_{\alpha\in\mathcal{E}\cup\mathcal{V}})-\frac{1}{2}\|v_{A}^{n,w}-v_{A}^{n,w-1}\|^{2}.\end{array}\label{eq:SHQP-decrease}
\end{eqnarray}
(The last term in \eqref{eq:SHQP-decrease} arises from the quadratic
term in \eqref{eq:Dykstra-min-subpblm}.) By line 10 of Algorithm
\ref{alg:Ext-Dyk}, $z_{i}^{n+1,0}=z_{i}^{n,\bar{w}}$ for all $i\in\mathcal{V}$
and $v_{H}^{n+1,0}=v_{H}^{n,\bar{w}}$ (even though the decompositions
\eqref{eq:reset-z-i-j-4} of $v_{H}^{n+1,0}$ and $v_{H}^{n,\bar{w}}$
may be different). Combining \eqref{eq:SHQP-decrease} over all $m\in\{1,\dots,n\}$
and $w\in\{1,\dots,\bar{w}\}$, we have 
\[
\begin{array}{c}
F(\{z_{\alpha}^{1,0}\}_{\alpha\in\mathcal{E}\cup\mathcal{V}})+\underset{m=1}{\overset{n}{\sum}}\underset{w=1}{\overset{\bar{w}}{\sum}}\|v_{A}^{m,w}-v_{A}^{m,w-1}\|^{2}\overset{\eqref{eq:SHQP-decrease}}{\leq}F(\{z_{\alpha}^{n,\bar{w}}\}_{\alpha\in\mathcal{E}\cup\mathcal{V}}).\end{array}
\]
Next, $F(\{z_{\alpha}^{n,\bar{w}}\}_{\alpha\in\mathcal{E}\cup\mathcal{V}})$
is bounded from above by weak duality. The proof of the claim is complete.

\textbf{Proof of (ii):} Substituting $\{z_{\alpha}\}_{\alpha\in\mathcal{E}\cup\mathcal{V}}$
in \eqref{eq:From-8} to be $\{z_{\alpha}^{n,w}\}_{\alpha\in\mathcal{E\cup}\mathcal{V}}$
and $x$ to be the primal minimizer $x^{*}$, we have 
\begin{eqnarray*}
 &  & \begin{array}{c}
\frac{1}{2}\|x_{0}-x^{*}\|^{2}+\underset{\alpha\in\mathcal{E}\cup\mathcal{V}}{\overset{}{\sum}}h_{\alpha}(x^{*})-F(\{z_{\alpha}^{1,0}\}_{\alpha\in\mathcal{E}\cup\mathcal{V}})\end{array}\\
 & \overset{\scriptsize\mbox{part (i)}}{\geq} & \begin{array}{c}
\frac{1}{2}\|x_{0}-x^{*}\|^{2}+\underset{\alpha\in\mathcal{E}\cup\mathcal{V}}{\overset{}{\sum}}h_{\alpha}(x^{*})-F(\{z_{\alpha}^{n,w}\}_{\alpha\in\mathcal{E}\cup\mathcal{V}})\end{array}\\
 & \overset{\eqref{eq:From-8}}{\geq} & \begin{array}{c}
\frac{1}{2}\left\Vert x_{0}-x^{*}-\underset{\alpha\in\mathcal{E}\cup\mathcal{V}}{\overset{}{\sum}}z_{\alpha}^{n,w}\right\Vert ^{2}\overset{\eqref{eq:from-10}}{=}\frac{1}{2}\|x_{0}-x^{*}-v_{A}^{n,w}\|^{2}.\end{array}
\end{eqnarray*}
The conclusion is immediate.

\textbf{Proof of (iii): }We first make use of the technique in \cite[Lemma 29.1]{BauschkeCombettes11}
(which in turn is largely attributed to \cite{BD86}) to show that
\begin{equation}
\begin{array}{c}
\underset{n\to\infty}{\liminf}\left[\left(\underset{w=1}{\overset{\bar{w}}{\sum}}\|v_{A}^{n,w}-v_{A}^{n,w-1}\|\right)\sqrt{n}\right]=0.\end{array}\label{eq:root-n-dec}
\end{equation}
Seeking a contradiction, suppose instead that there is an $\epsilon>0$
and $\bar{n}>0$ such that if $n>\bar{n}$, then $\left(\sum_{w=1}^{\bar{w}}\|v_{A}^{n,w}-v_{A}^{n,w-1}\|\right)\sqrt{n}>\epsilon$.
By the Cauchy Schwarz inequality, we have $\begin{array}{c}
\frac{\epsilon^{2}}{n}<\left(\underset{w=1}{\overset{\bar{w}}{\sum}}\|v_{A}^{n,w}-v_{A}^{n,w-1}\|\right)^{2}\leq\bar{w}\underset{w=1}{\overset{\bar{w}}{\sum}}\|v_{A}^{n,w}-v_{A}^{n,w-1}\|^{2}.\end{array}$ This contradicts the earlier claim in (i) that $\sum_{n=1}^{\infty}\sum_{w=1}^{\bar{w}}\|v_{A}^{n,w}-v_{A}^{n,w-1}\|^{2}$
is finite. 

Through \eqref{eq:root-n-dec}, we find a sequence $\{n_{k}\}_{k=1}^{\infty}$
such that 
\begin{equation}
\lim_{k\to\infty}\left[\left(\sum_{w=1}^{\bar{w}}\|v_{A}^{n_{k},w}-v_{A}^{n_{k},w-1}\|\right)\sqrt{n_{k}}\right]=0.\label{eq:subseq-sqrt-limit}
\end{equation}
Recalling the assumption \eqref{eq:sqrt-growth-sum-z}, we get 
\begin{equation}
\begin{array}{c}
\underset{k\to\infty}{\lim}\left[\left(\underset{w=1}{\overset{\bar{w}}{\sum}}\|v_{A}^{n_{k},w}-v_{A}^{n_{k},w-1}\|\right)\|z_{\alpha}^{n_{k},\bar{w}}\|\right]\overset{\eqref{eq:sqrt-growth-sum-z},\eqref{eq:subseq-sqrt-limit}}{=}0\mbox{ for all }\alpha\in\mathcal{E}\cup\mathcal{V}.\end{array}\label{eq:lim-sum-norm-z}
\end{equation}
Moreover, 
\begin{eqnarray}
|\langle v_{A}^{n_{k},\bar{w}}-v_{A}^{n_{k},p(n_{k},\alpha)},z_{\alpha}^{n_{k},\bar{w}}\rangle| & \leq & \begin{array}{c}
\|v_{A}^{n_{k},\bar{w}}-v_{A}^{n_{k},p(n_{k},\alpha)}\|\|z_{\alpha}^{n_{k},\bar{w}}\|\end{array}\label{eq:inn-pdt-sum-norm}\\
 & \leq & \begin{array}{c}
\left(\underset{w=1}{\overset{\bar{w}}{\sum}}\|v_{A}^{n_{k},w}-v_{A}^{n_{k},w-1}\|\right)\|z_{\alpha}^{n_{k},\bar{w}}\|.\end{array}\nonumber 
\end{eqnarray}
By (ii) and the finite dimensionality of $X$, there exists a further
subsequence of $\{v_{A}^{n_{k},\bar{w}}\}_{k=1}^{\infty}$ which converges
to some $v_{A}^{*}\in X$. Combining \eqref{eq:lim-sum-norm-z} and
\eqref{eq:inn-pdt-sum-norm} gives (iii).

\textbf{Proof of (iv):} From earlier results, we obtain 
\begin{eqnarray}
 &  & \begin{array}{c}
-\underset{\alpha\in\mathcal{E}\cup\mathcal{V}}{\overset{}{\sum}}h_{\alpha}(x_{0}-v_{A}^{*})\end{array}\label{eq:biggest-formula}\\
 & \overset{\eqref{eq:From-8}}{\leq} & \begin{array}{c}
\frac{1}{2}\|x_{0}-(x_{0}-v_{A}^{*})\|^{2}-F(\{z_{\alpha}^{n_{k},\bar{w}}\}_{\alpha\in\mathcal{E}\cup\mathcal{V}})\end{array}\nonumber \\
 & \overset{\eqref{eq:Dykstra-dual-defn},\eqref{eq:stagnant-indices}}{=} & \begin{array}{c}
\frac{1}{2}\|v_{A}^{*}\|^{2}+\underset{\alpha\in\mathcal{E}_{n_{k}}\cup\mathcal{V}}{\overset{}{\sum}}h_{\alpha}^{*}(z_{\alpha}^{n_{k},p(n_{k},\alpha)})\end{array}\nonumber \\
 &  & \begin{array}{c}
+\underset{(i,j)\notin\mathcal{E}_{n_{k}}}{\overset{}{\sum}}h_{(i,j)}^{*}(z_{(i,j)}^{n_{k},\bar{w}})-\langle x_{0},v_{A}^{n_{k},\bar{w}}\rangle+\frac{1}{2}\|v_{A}^{n_{k},\bar{w}}\|^{2}\end{array}\nonumber \\
 & \overset{\scriptsize\mbox{Claim \ref{claim:Fenchel-duality}(c)},\alpha\in S_{n,p(n,\alpha)},\eqref{eq:zero-indices}}{=} & \begin{array}{c}
\frac{1}{2}\|v_{A}^{*}\|^{2}+\underset{\alpha\in\mathcal{E}_{n_{k}}\cup\mathcal{V}}{\overset{}{\sum}}\langle x_{0}-v_{A}^{n_{k},p(n_{k},\alpha)},z_{\alpha}^{n_{k},p(n_{k},\alpha)}\rangle\end{array}\nonumber \\
 &  & \begin{array}{c}
-\underset{\alpha\in\mathcal{E}_{n_{k}}\cup\mathcal{V}}{\overset{}{\sum}}h_{\alpha}(x_{0}-v_{A}^{n_{k},p(n_{k},\alpha)})-\langle x_{0},v_{A}^{n_{k},\bar{w}}\rangle+\frac{1}{2}\|v_{A}^{n_{k},\bar{w}}\|^{2}\end{array}\nonumber \\
 & \overset{\eqref{eq:stagnant-indices}}{=} & \begin{array}{c}
\frac{1}{2}\|v_{A}^{*}\|^{2}-\underset{\alpha\in\mathcal{E}_{n_{k}}\cup\mathcal{V}}{\overset{}{\sum}}\langle v_{A}^{n_{k},p(n_{k},\alpha)}-v_{A}^{n_{k},\bar{w}},z_{\alpha}^{n_{k},\bar{w}}\rangle\end{array}\nonumber \\
 &  & \begin{array}{c}
-\underset{\alpha\in\mathcal{E}_{n_{k}}\cup\mathcal{V}}{\overset{}{\sum}}h_{\alpha}(x_{0}-v_{A}^{n_{k},p(n_{k},\alpha)})-\langle x_{0},v_{A}^{n_{k},\bar{w}}\rangle\end{array}\nonumber \\
 &  & \begin{array}{c}
+\left\langle x_{0}-v_{A}^{n_{k},\bar{w}},\underset{\alpha\in\mathcal{E}_{n_{k}}\cup\mathcal{V}}{\overset{}{\sum}}z_{\alpha}^{n_{k},p(n_{k},\alpha)}\right\rangle +\frac{1}{2}\|v_{A}^{n_{k},\bar{w}}\|^{2}\end{array}\nonumber \\
 & \overset{\eqref{eq:from-10},\eqref{eq:zero-indices}}{=} & \begin{array}{c}
\frac{1}{2}\|v_{A}^{*}\|^{2}-\frac{1}{2}\|v_{A}^{n_{k},\bar{w}}\|^{2}-\!\!\!\underset{\alpha\in\mathcal{E}_{n_{k}}\cup\mathcal{V}}{\overset{}{\sum}}\langle v_{A}^{n_{k},p(n_{k},\alpha)}-v_{A}^{n_{k},\bar{w}},z_{\alpha}^{n_{k},\bar{w}}\rangle\end{array}\nonumber \\
 &  & \begin{array}{c}
-\underset{\alpha\in\mathcal{E}_{n_{k}}\cup\mathcal{V}}{\overset{}{\sum}}h_{\alpha}(x_{0}-v_{A}^{n_{k},p(n_{k},\alpha)})\end{array}\nonumber 
\end{eqnarray}

Since $\lim_{k\to\infty}v_{A}^{n_{k},\bar{w}}=v_{A}^{*}$, we have
$\lim_{k\to\infty}\frac{1}{2}\|v_{A}^{*}\|^{2}-\frac{1}{2}\|v_{A}^{n_{k},\bar{w}}\|^{2}=0$.
The third term in the last group of formulas (i.e., the sum involving
the inner products) converges to 0 by (iii).

Next, recall that if $(i,j)\in\mathcal{E}_{n}$, by \eqref{eq:primal-subpblm},
we have $h_{(i,j)}(x_{0}-v_{A}^{n,p(n,(i,j))})=0$, which gives $x_{0}-v_{A}^{n,p(n,(i,j))}\overset{\eqref{eq:common-primal},\eqref{eq:H-i-j-subspace}}{\in}H_{(i,j)}$.
There is a constant $\kappa_{\mathcal{E}_{n_{k}}}>0$ such that 
\begin{eqnarray}
 &  & d(x_{0}-v_{A}^{n_{k},\bar{w}},\cap_{(i,j)\in\mathcal{E}}H_{(i,j)})\label{eq:reg-argument}\\
 & \overset{\scriptsize{\mathcal{E}_{n_{k}}\mbox{ connects }\mathcal{V},\mbox{ Prop \ref{prop:E-connects-V}(1)}}}{=} & d(x_{0}-v_{A}^{n_{k},\bar{w}},\cap_{(i,j)\in\mathcal{E}_{n_{k}}}H_{(i,j)})\nonumber \\
 & \leq & \kappa_{\mathcal{E}_{n_{k}}}\max_{(i,j)\in\mathcal{E}_{n_{k}}}d(x_{0}-v_{A}^{n_{k},\bar{w}},H_{(i,j)})\nonumber \\
 & \overset{x_{0}-v_{A}^{n_{k},p(n_{k},(i,j))}\in H_{(i,j)}}{\leq} & \kappa_{\mathcal{E}_{n_{k}}}\max_{(i,j)\in\mathcal{E}_{n_{k}}}\|v_{A}^{n_{k},\bar{w}}-v_{A}^{n_{k},p(n_{k},(i,j))}\|.\nonumber 
\end{eqnarray}
Let $\kappa:=\max\{\kappa_{\mathcal{E}'}:\mathcal{E}'\mbox{ connects }\mathcal{V}\}$.
We have $\kappa_{\mathcal{E}_{n_{k}}}\leq\kappa$. Taking limits of
\eqref{eq:reg-argument}, the RHS converges to zero by (i), so $d(x_{0}-v_{A}^{*},\cap_{(i,j)\in\mathcal{E}}H_{(i,j)})=0$,
or $x_{0}-v_{A}^{*}\in\cap_{(i,j)\in\mathcal{E}}H_{(i,j)}$. So $\sum_{(i,j)\in\mathcal{E}}h_{(i,j)}(x_{0}-v_{A}^{*})=0$.
Together with the fact that $x_{0}-v_{A}^{n_{k},p(n_{k},(i,j))}\in H_{(i,j)}$,
we have 
\begin{equation}
\sum_{(i,j)\in\mathcal{E}_{n_{k}}}h_{(i,j)}(x_{0}-v_{A}^{n_{k},p(n_{k},(i,j))})=0=\sum_{(i,j)\in\mathcal{E}}h_{(i,j)}(x_{0}-v_{A}^{*}).\label{eq:all-indicator-edges-zero}
\end{equation}

Lastly, by the lower semicontinuity of $h_{i}(\cdot)$, we have 
\begin{equation}
-\lim_{k\to\infty}\sum_{i\in\mathcal{V}}h_{i}(x_{0}-v_{A}^{n_{k},p(n_{k},i)})\leq-\sum_{i\in\mathcal{V}}h_{i}(x_{0}-v_{A}^{*}).\label{eq:lsc-argument}
\end{equation}
As mentioned after \eqref{eq:biggest-formula}, taking the limits
as $k\to\infty$ would result in the first three terms of the last
formula in \eqref{eq:biggest-formula} to be zero. Hence 
\begin{eqnarray*}
-\sum_{\alpha\in\mathcal{E}\cup\mathcal{V}}h_{\alpha}(x_{0}-v_{A}^{*}) & \overset{\eqref{eq:biggest-formula}}{\leq} & \lim_{k\to\infty}-\sum_{\alpha\in\mathcal{E}_{n_{k}}\cup\mathcal{V}}h_{\alpha}(x_{0}-v_{A}^{n_{k},p(n_{k},\alpha)})\\
 & \overset{\eqref{eq:all-indicator-edges-zero},\eqref{eq:lsc-argument}}{\leq} & -\sum_{\alpha\in\mathcal{E}\cup\mathcal{V}}h_{\alpha}(x_{0}-v_{A}^{*}).
\end{eqnarray*}
So \eqref{eq:biggest-formula} becomes an equation in the limit. The
first two lines of \eqref{eq:biggest-formula} then gives
\[
\lim_{k\to\infty}F(\{z_{\alpha}^{n_{k},\bar{w}}\}_{\alpha\in\mathcal{E}\cup\mathcal{V}})=\frac{1}{2}\|v_{A}^{*}\|^{2}+\sum_{i\in\mathcal{V}}h_{i}(x_{0}-v_{A}^{*}),
\]
which shows that $x_{0}-v_{A}^{*}$ is the primal minimizer. 
\end{proof}
A last detail that we need to resolve is to show that \eqref{eq:sqrt-growth-sum-z}
holds for the choice of $S_{n,w}$ in Algorithm \ref{alg:Ext-Dyk}. 
\begin{proposition}
(Growth of $\sum_{\alpha\in\mathcal{E}\cup\mathcal{V}}\|z_{\alpha}^{n,w}\|$)
If $S_{n,w}$ are such that $|S_{n,w}\cap\mathcal{V}|\leq1$ for all
$n\in\mathbb{N}$ and $w\in\{1,\dots,\bar{w}\}$ like in Algorithm
\ref{alg:Ext-Dyk}, then \eqref{eq:sqrt-growth-sum-z} holds. 
\end{proposition}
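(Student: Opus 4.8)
The plan is to split $\sum_{\alpha\in\mathcal E\cup\mathcal V}\|z_\alpha^{n,\bar w}\|$ into a vertex contribution $\sum_{i\in\mathcal V}\|z_i^{n,\bar w}\|$ and an edge contribution $\sum_{(i,j)\in\mathcal E}\|z_{(i,j)}^{n,\bar w}\|$, and to bound each by $O(\sqrt n)$. I would lean on two facts that hold unconditionally because their derivations do not invoke \eqref{eq:sqrt-growth-sum-z}: the inequality behind part (i), giving $\sum_{n=1}^\infty\sum_{w=1}^{\bar w}\|v_A^{n,w}-v_A^{n,w-1}\|^2=:M<\infty$, and the inequality behind part (ii), giving $\|v_A^{n,w}\|^2\le C$. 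Writing $d_{n,w}:=\|v_A^{n,w}-v_A^{n,w-1}\|$, the recurring mechanism is to control each one-step increment of a dual variable by a multiple of $d_{n,w}$, telescope across $w$ and across cycles (using $z_i^{n+1,0}=z_i^{n,\bar w}$ and $v_H^{n+1,0}=v_H^{n,\bar w}$ from line 10), and convert $\sum_{m\le n}\sum_w d_{m,w}$ into $O(\sqrt n)$ by Cauchy--Schwarz together with $M<\infty$.

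For the vertices I would introduce the coordinate-sum map $\Sigma(v):=\sum_{k\in\mathcal V}v_k\in X$. Every edge dual lies in $H_{(i,j)}^\perp\subset D^\perp$, so $\Sigma$ annihilates it by \eqref{eq:D-and-D-perp}; hence $\Sigma(v_A^{n,w})=\sum_{i\in\mathcal V}[z_i^{n,w}]_i$ using \eqref{eq_m:all_acronyms} and the sparsity of $z_i$ (Proposition \ref{prop:sparsity}). Because $|S_{n,w}\cap\mathcal V|\le 1$, at most one vertex dual changes at step $(n,w)$, and when $S_{n,w}\cap\mathcal V=\{i\}$ the sparsity gives $\|z_i^{n,w}-z_i^{n,w-1}\|=\|\Sigma(v_A^{n,w})-\Sigma(v_A^{n,w-1})\|\le\sqrt{|\mathcal V|}\,d_{n,w}$, while every other vertex dual is unchanged. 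Telescoping and summing over $i$ (each step charging at most one vertex, by the $\le 1$ assumption) yields $\sum_{i\in\mathcal V}\|z_i^{n,\bar w}\|\le\sum_{i\in\mathcal V}\|z_i^{1,0}\|+\sqrt{|\mathcal V|}\sum_{m=1}^n\sum_{w=1}^{\bar w}d_{m,w}$, and Cauchy--Schwarz bounds the double sum by $\sqrt{\bar w M}\,\sqrt n$. This makes the vertex contribution $O(\sqrt n)$.

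For the edges I would first bound $v_H$: since $\|v_H^{n,0}\|=\|v_H^{n-1,\bar w}\|\le\|v_A^{n-1,\bar w}\|+\sum_{i\in\mathcal V}\|z_i^{n-1,\bar w}\|$, part (ii) and the vertex estimate already give $\|v_H^{n,0}\|=O(\sqrt n)$. The reset in line 4 then bounds the start-of-cycle edge duals through \eqref{eq:reset-z-i-j-3}, namely $\sum_{(i,j)\in\mathcal E}\|z_{(i,j)}^{n,0}\|\le|\mathcal E|\,C_1\|v_H^{n,0}\|=O(\sqrt n)$. It remains to bound the within-cycle variation $\sum_{w=1}^{\bar w}\sum_{(i,j)}\|z_{(i,j)}^{n,w}-z_{(i,j)}^{n,w-1}\|$. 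Here, when the active edge set $S_{n,w}\cap\mathcal E$ induces a forest on $\mathcal V'_{n,w}$, the edge increments form the \emph{unique} decomposition of the edge-part of $v_A^{n,w}-v_A^{n,w-1}$ into the direct summands $H_{(i,j)}^\perp$ (the direct-sum structure used in the proof of Lemma \ref{lem:express-v-as-sum}); combined with the vertex-increment bound this makes each $\|z_{(i,j)}^{n,w}-z_{(i,j)}^{n,w-1}\|$ at most a uniform constant times $d_{n,w}$. Summing over the at most $|\mathcal E|$ active edges and over $w$, Cauchy--Schwarz bounds the within-cycle variation by a constant multiple of $\sqrt{\bar w M}=O(1)$. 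Adding this to the reset bound gives $\sum_{(i,j)}\|z_{(i,j)}^{n,\bar w}\|=O(\sqrt n)$, and combining with the vertex estimate yields \eqref{eq:sqrt-growth-sum-z}.

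The main obstacle is the edge contribution. For the vertex duals the coordinate-sum functional cleanly isolates each one-step increment, but edge duals live in $D^\perp$, where $\Sigma$ carries no information, so their magnitudes are not pinned down by $v_A$ alone. The decomposition of an increment of $v_A$ into individual edge increments is unique—and hence controllable by a single uniform constant, taken as the maximum of the finitely many decomposition constants over connected acyclic subgraphs—precisely when $S_{n,w}\cap\mathcal E$ is acyclic; if cycles were allowed, one could inject an arbitrarily large circulation around a cycle without changing $v_H$, so the individual edge duals could be unbounded. Thus the acyclicity of the active edge set, the operating assumption behind Algorithm \ref{alg:Ext-Dyk-1} and \eqref{eq:updated-x-n-w}, is exactly what the edge estimate needs, and verifying the uniform decomposition constant is the one genuinely non-routine point.
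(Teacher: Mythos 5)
Your proposal is correct and follows essentially the same route as the paper's proof: the coordinate-sum functional on $D^{\perp}$ to isolate the single active vertex increment and bound it by a constant times $\|v_{A}^{n,w}-v_{A}^{n,w-1}\|$, the acyclicity of $S_{n,w}\cap\mathcal{E}$ to get a uniform constant for decomposing $v_{H}$-increments into individual edge increments, the reset bound \eqref{eq:reset-z-i-j-3} for the start-of-cycle edge duals, and Cauchy--Schwarz against the summability from Theorem \ref{thm:convergence}(i) to produce the $O(\sqrt{n})$ growth. The only cosmetic difference is that you bound $\|v_{H}^{n,0}\|$ through part (ii) and the vertex estimate rather than by telescoping the $v_{H}$ increments as in \eqref{eq:z-bdd-for-E}; both yield the same conclusion.
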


\begin{proof}
We either have $S_{n,w}\cap\mathcal{V}=\emptyset$ or $|S_{n,w}\cap\mathcal{V}|=1$.
In the second case, let $i^{*}$ be the index such that $i^{*}\in S_{n,w}\cap\mathcal{V}$.
Otherwise, in the first case, we let $i^{*}$ be any index in $\mathcal{V}$.
We have 
\begin{eqnarray}
\sum_{i\in\mathcal{V}}[v_{A}^{n,w}-v_{A}^{n,w-1}]_{i} & \overset{\scriptsize{\mbox{line 8},\eqref{eq_m:all_acronyms}}}{=} & \sum_{i\in\mathcal{V}}\sum_{\alpha\in S_{n,w}}[z_{\alpha}^{n,w}-z_{\alpha}^{n,w-1}]_{i}\label{eq:for-norm-rate}\\
 & \overset{z_{(i,j)}\in D^{\perp},\eqref{eq:D-and-D-perp}}{=} & \sum_{i\in\mathcal{V}}[z_{i^{*}}^{n,w}-z_{i^{*}}^{n,w-1}]_{i}\nonumber \\
 & \overset{\scriptsize{\mbox{Prop \ref{prop:sparsity}}}}{=} & [z_{i^{*}}^{n,w}-z_{i^{*}}^{n,w-1}]_{i^{*}}.\nonumber 
\end{eqnarray}
Recall that the norm $\|\cdot\|$ always refers to the $2$-norm unless
stated otherwise. By the equivalence of norms in finite dimensions,
we can find a constant $c_{1}$ such that 
\begin{eqnarray}
\|v_{A}^{n,w}-v_{A}^{n,w-1}\| & \geq & c_{1}\sum_{i\in\mathcal{V}}\|[v_{A}^{n,w}-v_{A}^{n,w-1}]_{i}\|\label{eq:bdd-z-i}\\
 & \geq & c_{1}\left\Vert \sum_{i\in\mathcal{V}}[v_{A}^{n,w}-v_{A}^{n,w-1}]_{i}\right\Vert \nonumber \\
 & \overset{\eqref{eq:for-norm-rate}}{=} & c_{1}\|z_{i^{*}}^{n,w}-z_{i^{*}}^{n,w-1}\|\overset{\eqref{eq:Dykstra-min-subpblm}}{=}c_{1}\sum_{i\in\mathcal{V}}\|z_{i}^{n,w}-z_{i}^{n,w-1}\|.\nonumber 
\end{eqnarray}

Next, $v_{H}^{n,w}-v_{H}^{n,w-1}\overset{\eqref{eq:from-10}}{=}v_{A}^{n,w}-v_{A}^{n,w-1}-(z_{i^{*}}^{n,w}-z_{i^{*}}^{n,w-1})$,
so 
\begin{eqnarray}
\|v_{H}^{n,w}-v_{H}^{n,w-1}\| & \leq & \|v_{A}^{n,w}-v_{A}^{n,w-1}\|+\|z_{i^{*}}^{n,w}-z_{i^{*}}^{n,w-1}\|\label{eq:bdd-v-H}\\
 & \overset{\eqref{eq:bdd-z-i}}{\leq} & \left(1+\frac{1}{c_{1}}\right)\|v_{A}^{n,w}-v_{A}^{n,w-1}\|.\nonumber 
\end{eqnarray}
We can choose $\{z_{(i,j)}^{n,w}\}_{(i,j)\in\mathcal{E}}$ such that
\begin{equation}
\sum_{(i,j)\in S_{n,w}\cap\mathcal{E}}[z_{(i,j)}^{n,w}-z_{(i,j)}^{n,w-1}]\overset{\scriptsize{\mbox{line 8}}}{=}\sum_{(i,j)\in\mathcal{E}}[z_{(i,j)}^{n,w}-z_{(i,j)}^{n,w-1}]\overset{\eqref{eq:v-H-def}}{=}v_{H}^{n,w}-v_{H}^{n,w-1}.\label{eq:decomp-v-H}
\end{equation}
Without loss of generality, we can assume that $S_{n,w}\cap\mathcal{E}$
contains edges that do not form a cycle. This also means that for
a $v_{H}^{n,w}-v_{H}^{n,w-1}$, each $z_{(i,j)}^{n,w}-z_{(i,j)}^{n,w-1}$
can be determined uniquely with a linear map from the relation \eqref{eq:decomp-v-H}.
Therefore there is a constant $\kappa_{(i,j),S_{n,w}\cap\mathcal{E}}>0$
such that 
\begin{equation}
\|z_{(i,j)}^{n,w}-z_{(i,j)}^{n,w-1}\|\leq\kappa_{(i,j),S_{n,w}\cap\mathcal{E}}\|v_{H}^{n,w}-v_{H}^{n,w-1}\|.\label{eq:basic-bdd-z-i-j}
\end{equation}
Thus there is a constant $\kappa>0$ such that 
\begin{equation}
\sum_{(i,j)\in\mathcal{E}}\|z_{(i,j)}^{n,w}-z_{(i,j)}^{n,w-1}\|\overset{\eqref{eq:Dykstra-min-subpblm}}{=}\sum_{(i,j)\in S_{n,w}\cap\mathcal{E}}\|z_{(i,j)}^{n,w}-z_{(i,j)}^{n,w-1}\|\overset{\eqref{eq:basic-bdd-z-i-j}}{\leq}\kappa\|v_{H}^{n,w}-v_{H}^{n,w-1}\|.\label{eq:bdd-z-i-j}
\end{equation}
Combining \eqref{eq:bdd-z-i}, \eqref{eq:bdd-v-H} and \eqref{eq:bdd-z-i-j}
together shows that there is a constant $C_{2}>1$ such that 
\begin{equation}
\|v_{H}^{n,w}-v_{H}^{n,w-1}\|+\sum_{(i,j)\in\mathcal{E}}\|z_{(i,j)}^{n,w}-z_{(i,j)}^{n,w-1}\|+\sum_{i\in\mathcal{V}}\|z_{i}^{n,w}-z_{i}^{n,w-1}\|\leq C_{2}\|v_{A}^{n,w}-v_{A}^{n,w-1}\|.\label{eq:all-3-bdd}
\end{equation}
Since $\{z_{\alpha}^{n,0}\}_{\alpha\in\mathcal{E}}$ was chosen to
satisfy \eqref{eq_m:resetted-z-i-j}, there is some $M>1$ such that
\begin{equation}
\sum_{\alpha\in\mathcal{E}}\|z_{\alpha}^{n,0}\|\overset{\eqref{eq:reset-z-i-j-3}}{\leq}M\|v_{H}^{n,0}\|\overset{\eqref{eq:reset-z-i-j-4}}{\leq}M\left(\|v_{H}^{1,0}\|+\sum_{m=1}^{n-1}\sum_{w=1}^{\bar{w}}\|v_{H}^{m,w}-v_{H}^{m,w-1}\|\right)\label{eq:z-bdd-for-E}
\end{equation}
Now for any $n\geq1$, we have 
\begin{eqnarray}
\sum_{\alpha\in\mathcal{E}\cup\mathcal{V}}\|z_{\alpha}^{n,\bar{w}}\|\!\!\!\! & \leq & \sum_{\alpha\in\mathcal{E}}\|z_{\alpha}^{n,0}\|+\sum_{w=1}^{\bar{w}}\sum_{\alpha\in\mathcal{E}}\|z_{\alpha}^{n,w}-z_{\alpha}^{n,w-1}\|\label{eq:2nd-big-ineq}\\
 &  & +\sum_{m=1}^{n}\sum_{w=1}^{\bar{w}}\sum_{\alpha\in\mathcal{V}}\|z_{\alpha}^{m,w}-z_{\alpha}^{m,w-1}\|+\sum_{\alpha\in\mathcal{V}}\|z_{\alpha}^{1,0}\|\nonumber \\
 & \overset{\eqref{eq:z-bdd-for-E}}{\leq} & M\|v_{H}^{1,0}\|+\sum_{\alpha\in\mathcal{V}}\|z_{\alpha}^{1,0}\|+\sum_{w=1}^{\bar{w}}\left(\sum_{\alpha\in\mathcal{E}}\|z_{\alpha}^{n,w}-z_{\alpha}^{n,w-1}\|\right)\nonumber \\
 &  & +\sum_{m=1}^{n-1}\sum_{w=1}^{\bar{w}}\left(M\|v_{H}^{m,w}-v_{H}^{m,w-1}\|+\sum_{\alpha\in\mathcal{V}}\|z_{\alpha}^{m,w}-z_{\alpha}^{m,w-1}\|\right)\nonumber \\
 & \overset{\eqref{eq:all-3-bdd}}{\leq} & M\|v_{H}^{1,0}\|+\sum_{\alpha\in\mathcal{V}}\|z_{\alpha}^{1,0}\|+MC_{2}\sum_{m=1}^{n}\sum_{w=1}^{\bar{w}}\|v_{A}^{n,w}-v_{A}^{n,w-1}\|.\nonumber 
\end{eqnarray}
By the Cauchy Schwarz inequality, we have 
\begin{equation}
\sum_{m=1}^{n}\sum_{w=1}^{\bar{w}}\|v_{A}^{m,w}-v_{A}^{m,w-1}\|\leq\sqrt{n\bar{w}}\sqrt{\sum_{m=1}^{n}\sum_{w=1}^{\bar{w}}\|v_{A}^{m,w}-v_{A}^{m,w-1}\|^{2}}.\label{eq:sum-bdd-by-sqrt-n}
\end{equation}
Since the second square root of the right hand side of \eqref{eq:sum-bdd-by-sqrt-n}
is bounded by Theorem \ref{thm:convergence}(i), we make use of \eqref{eq:2nd-big-ineq}
to obtain the conclusion \eqref{eq:sqrt-growth-sum-z} as needed. 
\end{proof}
\begin{remark}

(Convergence rate) An aspect of Algorithm \ref{alg:Ext-Dyk} that
we do not cover in this paper is the convergence rate. In the case
where there are no dual minimizers, components of the dual variables
$\{\{z_{\alpha}^{n,w}\}_{\alpha\in\mathcal{E}\cup\mathcal{V}}\}_{{1\leq n<\infty\atop 0\leq w\leq\bar{w}}}$
need not be bounded. But in the case where the variables $\{\{z_{\alpha}^{n,w}\}_{\alpha\in\mathcal{E}\cup\mathcal{V}}\}_{{1\leq n<\infty\atop 0\leq w\leq\bar{w}}}$
remain bounded as $n\to\infty$, an $O(1/n)$ rate was shown for the
dual objective function, which leads to an $O(1/\sqrt{n})$ convergence
rate of the distance $\|x^{n,w}-x^{*}\|$ to the optimal solution
$x^{*}$. The ideas for these results are presented in \cite[Section 3]{Pang_Dyk_spl}.
Such ideas were already present in \cite{Beck_Tetruashvili_2013,Beck_alt_min_SIOPT_2015}
for example. 

\end{remark}

\subsection{Generality of \eqref{eq:Dyk-primal}}

Another case of interest is when $\frac{1}{2}\|x-x_{0}\|^{2}$ in
\eqref{eq:Dyk-primal} is replaced by $\frac{1}{2}\|x-x_{0}\|_{Q}^{2}$,
where where $\|x\|_{Q}^{2}=\langle x,Qx\rangle$ and $Q$ is a block
diagonal positive definite matrix. In the case where $Q$ is such
that $\|x\|_{Q}^{2}=\sum_{i\in\mathcal{V}}\lambda_{i}\|x_{i}\|^{2}$
for some $\lambda\in\mathbb{R}^{|\mathcal{V}|}$ such that $\lambda>0$
and $\mathbf{f}_{i}\equiv0$ for all $i\in\mathcal{V}$, then the
minimizer of \eqref{eq:Dyk-primal} is $\frac{1}{\sum_{i\in\mathcal{V}}\lambda_{i}}\sum_{i\in\mathcal{V}}\lambda_{i}y_{i}^{0}$.
In other words, \eqref{eq:Dyk-primal} becomes a weighted average
consensus problem. 

We show how to transform a problem involving $\frac{1}{2}\|x-x_{0}\|_{Q}^{2}$
to one involving $\frac{1}{2}\|x-x_{0}\|^{2}$. Note that 
\begin{eqnarray*}
 &  & \min_{x\in X^{|\mathcal{V}|}}\frac{1}{2}\|x-x_{0}\|_{Q}^{2}+\sum_{(i,j)\in\mathcal{E}}\delta_{H_{(i,j)}}(x)+\sum_{i\in\mathcal{V}}\mathbf{f}_{i}(x)\\
 & \equiv & \min_{x\in X^{|\mathcal{V}|}}\frac{1}{2}\|Q^{1/2}x-Q^{1/2}x_{0}\|^{2}+\sum_{(i,j)\in\mathcal{E}}\delta_{Q^{1/2}H_{(i,j)}}(Q^{1/2}x)+\sum_{i\in\mathcal{V}}\mathbf{f}_{i}\circ Q^{-1/2}(Q^{1/2}x).
\end{eqnarray*}
We can thus let $v^{0}$ be $Q^{1/2}x_{0}$, and seek the variable
$v=Q^{1/2}x$. The function $\delta_{Q^{1/2}H_{(i,j)}}(\cdot)$ requires
a transformation of the set \eqref{eq:H-i-j-subspace}, but the transformed
problem would fit the framework of Dykstra's algorithm. 

\section{\label{sec:non-strongly-convex-alg}Distributed algorithm for functions
not strongly convex}

We saw earlier that the minimization of the sum of strongly convex
functions can be minimized over a network. A natural question to ask
is whether it is possible to minimize the sum of functions that are
not necessarily strongly convex in the same setting. 

A technique for minimizing \eqref{eq:better-dual} is to choose 2
or more nodes , say $S_{k}\subset\mathcal{V}$ (which preferably forms
a connected subgraph to allow for communications), and then minimize
the function varying only the dual variables corresponding to the
chosen nodes. This leads to Algorithm \ref{alg:distributed-dual-ascent}. 

\begin{algorithm}[h]
\caption{Distributed dual ascent algorithm}
\label{alg:distributed-dual-ascent}

Consider the problem \eqref{eq:better-dual}. 

Let $\{y_{i}^{0}\}_{i\in\mathcal{V}}\subset X$ be starting variables
such that 
\[
\sum_{i\in\mathcal{V}}y_{i}^{0}=0.
\]

For $n=1,\dots$

$\quad$Find a set $S_{n}\subset\mathcal{V}$ such that $|S_{n}|\geq2$. 

$\quad$For all $i\in S_{n}$, define $y_{i}^{n}$ so that 
\begin{eqnarray*}
\{y_{i}^{n}\}_{i\in S_{n}}\in & \underset{\{y_{i}\}_{i\in S_{n}}}{\arg\max} & -\sum_{i\in S_{n}}f_{i}^{*}(y_{i})\\
 & \mbox{s.t. } & \sum_{i\in S_{n}}y_{i}=\sum_{i\in S_{n}}y_{i}^{n-1}.
\end{eqnarray*}

$\quad$Define $y_{i}^{k}=y_{i}^{k-1}$ for all $i\notin S_{n}$. 

End for
\end{algorithm}
Such a method is analogous to the method of alternating minimization,
which have stationary points that are not optimal points. We now show
an example of such a stationary point for Algorithm \ref{alg:distributed-dual-ascent}. 

\begin{example}

\label{exa:alg-is-stuck} (Algorithm \ref{alg:distributed-dual-ascent}
can get stuck at non-optimal value) Consider a graph with $\mathcal{V}=\{1,2,3\}$
and $\mathcal{E}=\{(1,2),(2,3)\}$. Let $f_{i}:\mathbb{R}\to\bar{\mathbb{R}}$,
$i\in\{1,2,3\}$, be defined by 
\[
f_{1}(x)=\frac{1}{2}(x+1)^{2},\quad f_{2}(x)=\delta_{\{0\}}(x),\quad f_{3}(x)=\frac{1}{2}(x-1)^{2},
\]
and we have $f_{1}^{*}(z)=\frac{1}{2}z^{2}-z$, $f_{2}^{*}(z)=\delta_{\mathbb{R}}(z)$
and $f_{3}^{*}(z)=\frac{1}{2}z^{2}+z$. Let $\bar{y}=(0,0,0)\in\mathbb{R}^{3}$.
For $S\subset\{1,2,3\}$, denote the dual problem $(DP_{S})$ by 
\begin{eqnarray*}
(DP_{S}) & \max_{y_{i}\in\mathbb{R},i\in S} & -\sum_{i\in S}f_{i}^{*}(y_{i})\\
 & \mbox{s.t.} & \sum_{i\in S}y_{i}=\sum_{i\in S}\bar{y}_{i}.
\end{eqnarray*}
The problem $(DP_{\{1,2\}})$ has minimizer $y_{1}=y_{2}=0$ with
$-1\in\partial f_{1}^{*}(0)$ and $-1\in\partial f_{2}^{*}(0)$, and
the problem $(DP_{\{2,3\}})$ minimizer $y_{2}=y_{3}=0$ with $1\in\partial f_{2}^{*}(0)$
and $1\in\partial f_{3}^{*}(0)$. Hence the problem $(DP_{\{1,2,3\}})$
has a stationary point of $\bar{y}=(0,0,0)$ for the alternating minimization
method. However, the global minimizer to $(DP_{\{1,2,3\}})$ is $(1,0,-1)$
with $0\in\partial f_{i}^{*}(y_{i})$ for $i\in\{1,2,3\}$. 

\end{example}

Note from this example that the failure can be identified from the
fact that $x=-1$ if we use the edge $(1,2)$ and $x=1$ if we use
the edge $(2,3)$. Node 2 should be able to figure out that the $x$
values corresponding to edges $(1,2)$ and $(2,3)$ are too far apart,
and one needs to minimize $f_{1}^{*}(\cdot)+f_{3}^{*}(\cdot)$ in
order to avoid convergence to a non optimal value. 

We now give a proof for the convergence of Algorithm \ref{alg:distributed-dual-ascent},
which is based on the proof in \cite{Tseng_JOTA_2001} (and who in
turn cited other references). To shorten notation, for each set $S\subset\mathcal{V}$,
we let $\mathcal{D}(S)$ be the set of directions $d$ in $X^{|\mathcal{V}|}$
defined by 
\[
\mathcal{D}(S):=\left\{ d\in X^{|\mathcal{V}|}:\sum_{i=1}^{|\mathcal{V}|}d_{i}=0\mbox{ and supp}(d)\subset S\right\} ,
\]
where supp$(d)$ is the set $\{i:d_{i}\neq0\}$. We define $G:X^{|\mathcal{V}|}\to\bar{\mathbb{R}}$
by 
\[
G(y)=\sum_{i\in\mathcal{V}}f_{i}^{*}(y_{i}),
\]
and let $G'(y;d)$ be the directional derivative of $G$ at $y$ in
the direction $d$. 
\begin{theorem}
\label{thm:Conv-general-alg}(Convergence of Algorithm \ref{alg:distributed-dual-ascent})
Suppose that there is an integer $T$ such that for every $n\geq1$,
the sets $\{S_{n+i}\}_{i=1}^{T}$ satisfies the following:

\begin{enumerate}
\item [(a)]Suppose $S',S''$ are elements in $\{S_{n+i}\}_{i=1}^{T}$ such
that $S'\cap S''\neq\emptyset$ and $y\in X^{|\mathcal{V}|}$. Then
for all $d\in\mathcal{D}(S'\cup S'')$, we can find $d'\in\mathcal{D}(S')$
and $d''\in\mathcal{D}(S'')$ such that $d=d'+d''$ and 
\begin{equation}
G'(y;d)=G'(y;d')+G'(y;d'').\label{eq:dirn-derv-condn}
\end{equation}
\item [(b)]Suppose $y\in X^{|\mathcal{V}|}$. If for all $r\in\{1,\dots,T\}$,
$G'(y;d_{r})\geq0$ for all $d_{r}\in\mathcal{D}(S_{n+r})$, then
$y$ is a minimizer of $G(\cdot)$. 
\end{enumerate}
Suppose further that the sequence $\{y^{n}\}_{n=1}^{\infty}$ is bounded.
Then every cluster point of $\{y^{n}\}_{n=1}^{\infty}$ is a minimizer
of $G$. 
\end{theorem}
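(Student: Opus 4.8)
The plan is to follow the alternating-minimization strategy of \cite{Tseng_JOTA_2001}: extract a cluster point, stabilize the pattern of blocks over one length-$T$ window, push the per-block optimality conditions to the limit, and then let assumptions (a) and (b) upgrade per-block optimality into global optimality. First I would record two monotonicity facts. Since $\{y_i^n\}_{i\in S_n}$ minimizes $\sum_{i\in S_n}f_i^*(y_i)$ over the feasible affine set while $y^{n-1}$ is itself feasible, $\{G(y^n)\}$ is nonincreasing; as $\{y^n\}$ is bounded and $G$ is closed (hence bounded below on the compact closure of the iterates), $G(y^n)\to G^*$ for some $G^*$. Let $\bar y$ be a cluster point, $y^{n_k}\to\bar y$. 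Because $\mathcal V$ is finite there are only finitely many windows $(S_{n_k+1},\dots,S_{n_k+T})$, so I would pass to a subsequence on which the window is a fixed tuple $(\bar S_1,\dots,\bar S_T)$ and, simultaneously, $y^{n_k+r}\to\hat y^{(r)}$ for every $r\in\{0,\dots,T\}$, with $\hat y^{(0)}=\bar y$. Two structural facts survive: $\hat y^{(r)}-\hat y^{(r-1)}\in\mathcal{D}(\bar S_r)$ (the step lies in the closed set $\mathcal{D}(\bar S_r)$), and $G(\hat y^{(r)})=G^*$ for every $r$.

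Next I would extract per-block optimality at the limit points. Each $y^{n_k+r}$ minimizes $G$ over $y^{n_k+r-1}+\mathcal{D}(\bar S_r)$, so $G'(y^{n_k+r};d)\geq0$ for all $d\in\mathcal{D}(\bar S_r)$. Since $y\mapsto G'(y;d)=\inf_{t>0}t^{-1}(G(y+td)-G(y))$ is upper semicontinuous for convex $G$, letting $k\to\infty$ yields $G'(\hat y^{(r)};d)\geq0$ for all $d\in\mathcal{D}(\bar S_r)$; equivalently, $\hat y^{(r)}$ minimizes $G$ over $\hat y^{(r)}+\mathcal{D}(\bar S_r)$, which I will abbreviate as \emph{optimality over $\bar S_r$}. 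The workhorse is an equal-value slide: if $y$ is optimal over a block $S$ and $y'\in y+\mathcal{D}(S)$ satisfies $G(y')=G(y)$, then $y'$ is optimal over $S$ too, since both minimize $G$ over the same affine set.

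Then comes the transfer, the heart of the argument, which I would prove by induction on $j$: \emph{$\hat y^{(j)}$ is optimal over every $\bar S_\ell$ with $\ell\leq j$}. Given the claim for $j-1$: $\hat y^{(j)}$ is optimal over $\bar S_j$ by the previous paragraph, and sliding back along $\hat y^{(j)}-\hat y^{(j-1)}\in\mathcal{D}(\bar S_j)$ (equal values) makes $\hat y^{(j-1)}$ optimal over $\bar S_j$ as well. Fix $\ell<j$. If $\bar S_\ell\cap\bar S_j=\emptyset$, then $\hat y^{(j)}$ and $\hat y^{(j-1)}$ agree on the $\bar S_\ell$-coordinates, so separability of $G$ transfers optimality over $\bar S_\ell$ from $\hat y^{(j-1)}$ to $\hat y^{(j)}$. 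If $\bar S_\ell\cap\bar S_j\neq\emptyset$, then $\hat y^{(j-1)}$ is optimal over both $\bar S_\ell$ and $\bar S_j$, so assumption (a)---which splits any $d\in\mathcal{D}(\bar S_\ell\cup\bar S_j)$ as $d=d'+d''$ with $d'\in\mathcal{D}(\bar S_\ell)$, $d''\in\mathcal{D}(\bar S_j)$ and $G'(\hat y^{(j-1)};d)=G'(\hat y^{(j-1)};d')+G'(\hat y^{(j-1)};d'')$---upgrades this to optimality over $\bar S_\ell\cup\bar S_j$; sliding forward along $\hat y^{(j)}-\hat y^{(j-1)}\in\mathcal{D}(\bar S_j)\subset\mathcal{D}(\bar S_\ell\cup\bar S_j)$ (equal values) then gives optimality of $\hat y^{(j)}$ over $\bar S_\ell\cup\bar S_j$, hence over $\bar S_\ell$. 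At $j=T$ this produces a single point $\hat y^{(T)}$ optimal over each of $\bar S_1,\dots,\bar S_T$; assumption (b), applied with $n=n_k$, makes $\hat y^{(T)}$ a global minimizer, and since $G(\bar y)=G^*=G(\hat y^{(T)})$, the arbitrary cluster point $\bar y$ is a minimizer too.

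The \emph{main obstacle} I anticipate is the claim $G(\hat y^{(r)})=G^*$ at each window limit point. Lower semicontinuity yields only $G(\hat y^{(r)})\leq G^*$, whereas every equal-value slide above needs genuine equality, i.e.\ no downward jump of $G$ along $y^{n_k+r}\to\hat y^{(r)}$. This is precisely where closedness of the $f_i$ and the continuity of the conjugates $f_i^*$ on the interior of their domains must be invoked, and it is the step that Tseng's framework treats most carefully; the upper semicontinuity of $G'(\cdot;d)$ used in the second paragraph carries the same finiteness caveat at boundary points of $\dom G$, so I expect both to be discharged together under the closedness/continuity hypotheses on the $f_i$.
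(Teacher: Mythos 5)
Your proposal is correct and follows essentially the same route as the paper's proof (both adapted from Tseng's alternating-minimization argument): stabilize the block pattern over a length-$T$ window around the cluster point, use monotonicity to get equal $G$-values at all window limit points, establish the equal-value slide, run the induction that propagates block-optimality across the window via assumption (a), and close with assumption (b). The only differences are cosmetic — you use a forward window and pass directional-derivative inequalities to the limit where the paper uses a backward window ending at the cluster point and passes the function-value inequalities via continuity of $G$ — and the continuity caveat you flag at the end is present (silently) in the paper's argument as well.
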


\begin{proof}
Our proof is adapted from the ideas in \cite[Section 4]{Tseng_JOTA_2001}.
Suppose $\bar{y}\in X^{|\mathcal{V}|}$ is a cluster point of $\{y^{n}\}_{n=1}^{\infty}$
and that $\mathcal{R}\subset\{1,2,\dots\}$ is such that $\lim_{r\in\mathcal{R}}y^{r}=\bar{y}$.
We can assume, by taking subsequences if necessary, that $\lim_{r\in\mathcal{R}}y^{r-T+1+j}$
converges to some $\bar{y}^{j}$ for all $j\in\{1,\dots,T\}$. We
have $\bar{y}^{T-1}=\bar{y}$. We also note that $\{G(y^{n})\}_{n=1}^{\infty}$
is a non-increasing sequence, so 
\begin{equation}
\lim_{r\in\mathcal{R}}G(y^{r-T+1+j})\mbox{ exists for all }j\in\{1,\dots,T\}.\label{eq:Tseng-6}
\end{equation}

Next, we can assume that for each $j\in\{1,\dots,T\}$, the set $\{S_{r-T+1+j}\}_{r\in\mathcal{R}}$
depends only on $j$, which we call $\bar{S}_{j}$. For each $j\in\{1,\dots,T\}$,
since $\bar{S}_{j}$ is chosen at iteration $r-T+1+j$ for $r\in\mathcal{R}$,
we have 
\begin{eqnarray*}
G(y^{r-T+1+j}) & \leq & G(y^{r-T+1+j}+d_{j})\mbox{ for all }d_{j}\in\mathcal{D}(\bar{S}_{j})\\
y_{i}^{r-T+j} & = & y_{i}^{r-T+1+j}\mbox{ for all }i\notin\bar{S}_{j}.
\end{eqnarray*}
Then the continuity of $G(\cdot)$ gives us 
\begin{eqnarray}
G(\bar{y}^{j}) & \leq & G(\bar{y}^{j}+d_{j})\mbox{ for all }d_{j}\in\mathcal{D}(\bar{S}_{j})\label{eq:Tseng-7}\\
\bar{y}_{i}^{j-1} & = & \bar{y}_{i}^{j}\mbox{ for all }i\notin\bar{S}_{j}.\nonumber 
\end{eqnarray}
We have $G(\bar{y}^{1})=\cdots=G(\bar{y}^{T})$. The previous line
also gives $\bar{y}^{j}-\bar{y}^{j-1}\in\mathcal{D}(\bar{S}_{j})$,
so 
\begin{equation}
G(\bar{y}^{j-1})=G(\bar{y}^{j})\overset{\eqref{eq:Tseng-7}}{\leq}G(\bar{y}^{j}+(d_{j}+\bar{y}^{j-1}-\bar{y}^{j}))=G(\bar{y}^{j-1}+d_{j})\mbox{ for all }d_{j}\in\mathcal{D}(\bar{S}_{j}).\label{eq:Tseng-8}
\end{equation}
We claim that for $j=1,\dots,T-1$, 
\begin{equation}
G(\bar{y}^{j})\leq G(\bar{y}^{j}+d_{k})\mbox{ for all }d_{k}\in\mathcal{D}(\bar{S}_{1}),\mathcal{D}(\bar{S}_{2}),\dots,\mbox{ or }\mathcal{D}(\bar{S}_{j}).\label{eq:Tseng-9}
\end{equation}
By \eqref{eq:Tseng-7}, \eqref{eq:Tseng-9} holds for $j=1$. Suppose
\eqref{eq:Tseng-9} holds for $j=1,\dots,l-1$ for some $l\in\{2,\dots,T-1\}$.
We show that \eqref{eq:Tseng-9} holds for $j=l$. From \eqref{eq:Tseng-8},
\[
G(\bar{y}^{l-1})\le G(\bar{y}^{l-1}+d_{l})\mbox{ for all }d_{l}\in\mathcal{D}(\bar{S}_{l}),
\]
implying that 
\begin{equation}
G'(\bar{y}^{l-1};\bar{y}^{l}-\bar{y}^{l-1}+v)\geq0\mbox{ for all }v\in\mathcal{D}(\bar{S}_{l}).\label{eq:crit-1}
\end{equation}
Also, since \eqref{eq:Tseng-9} holds for $j=l-1$, we have, for all
$k\in\{1,\dots,l-1\}$, 
\[
G(\bar{y}^{l-1})\leq G(\bar{y}^{l-1}+d_{k}-v)\mbox{ for all }d_{k}\in\mathcal{D}(\bar{S}_{k})\mbox{ and }v\in\mathcal{D}(\bar{S}_{l}\cap\bar{S}_{k}),
\]
which in turn implies 
\begin{equation}
G'(\bar{y}^{l-1};d_{k}-v)\geq0\mbox{ for all }d_{k}\in\mathcal{D}(\bar{S}_{k})\mbox{ and }v\in\mathcal{D}(\bar{S}_{l}\cap\bar{S}_{k}).\label{eq:crit-2}
\end{equation}
If $\bar{S}_{l}\cap\bar{S}_{k}=\emptyset$, then $v$ can be taken
to be zero, and we get \eqref{eq:dirn-derv-condn}. By \eqref{eq:crit-1}
and \eqref{eq:crit-2} and property (a) for each $d_{k}$, we can
choose $v$ such that for all $d_{k}\in\mathcal{D}(\bar{S}_{k})$
\begin{equation}
G'(\bar{y}^{l-1};\bar{y}^{l}-\bar{y}^{l-1}+d_{k})\overset{\scriptsize{\mbox{ppty (a)}}}{=}G'(\bar{y}^{l-1};\bar{y}^{l}-\bar{y}^{l-1}+v)+G'(\bar{y}^{l-1};d_{k}-v)\overset{\scriptsize{\eqref{eq:crit-1},\eqref{eq:crit-2}}}{\geq}0.\label{eq:dirn-and-step}
\end{equation}
Since $G(\cdot)$ is convex, 
\[
G(\bar{y}^{l}+d_{k})=G\big(\bar{y}^{l-1}+(\bar{y}^{l}-\bar{y}^{l-1}+d_{k})\big)\overset{\eqref{eq:dirn-and-step}}{\geq}G(\bar{y}^{l-1})=G(\bar{y}^{l})\mbox{ for all }d_{k}\in\mathcal{D}(\bar{S}_{k}).
\]
Since \eqref{eq:Tseng-7} holds with $j=l$, \eqref{eq:Tseng-9} holds
for $j=l$. So \eqref{eq:Tseng-9} holds for all $j\in\{1,\dots,T-1\}$.
Taking $j=T-1$ for \eqref{eq:Tseng-9} and combining property (b)
proves that $\bar{y}=\bar{y}^{T-1}$ is a minimizer of $G(\cdot)$. 
\end{proof}
Define $\mathcal{V}_{sm}\subset\mathcal{V}$ to be such that 
\[
\mathcal{V}_{sm}:=\{i\in\mathcal{V}:f_{i}^{*}(\cdot)\mbox{ is smooth}\}.
\]

We give more insight on Properties (a) and (b) in Theorem \ref{thm:Conv-general-alg}.
\begin{proposition}
Property (a) in Theorem \ref{thm:Conv-general-alg} is satisfied if
for any two elements $S',S''$ in $\{S_{n+i}\}_{i=1}^{T}$, either
$S'\cap S''=\emptyset$ or $S'\cap S''\cap\mathcal{V}_{sm}\neq\emptyset$. 
\end{proposition}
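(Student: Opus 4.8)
The plan is to exploit the separability of $G$ across vertices, so that its directional derivative splits coordinatewise as $G'(y;d)=\sum_{i\in\mathcal{V}}(f_i^*)'(y_i;d_i)$, where $(f_i^*)'(y_i;\cdot)$ is the one-dimensional directional derivative of the convex function $f_i^*$ (this limit exists and the sum is finite). Property (a) only needs to be verified when $S'\cap S''\neq\emptyset$, in which case the hypothesis supplies a vertex $i_0\in S'\cap S''\cap\mathcal{V}_{sm}$. The key idea is to construct the decomposition $d=d'+d''$ so that $d'_i$ and $d''_i$ are \emph{simultaneously} nonzero only at $i_0$, routing through this single smooth coordinate all the adjustment needed to force $d'$ and $d''$ to sum to zero individually.

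Concretely, write $A=S'\setminus S''$, $B=S''\setminus S'$, $C=S'\cap S''$, and set $s_A=\sum_{i\in A}d_i$, $s_B=\sum_{i\in B}d_i$, $s_C=\sum_{i\in C}d_i$; since $d\in\mathcal{D}(S'\cup S'')$ we have $s_A+s_B+s_C=0$. I would then define $d'_i=d_i$ for $i\in A$, $d''_i=d_i$ for $i\in B$, $d'_i=d_i$ and $d''_i=0$ for $i\in C\setminus\{i_0\}$, and all components zero outside $S'\cup S''$. The two remaining scalars are fixed by $d'_{i_0}=d_{i_0}-s_A-s_C$ and $d''_{i_0}=-s_B$. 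A short check using $s_A+s_B+s_C=0$ gives $d'_{i_0}+d''_{i_0}=d_{i_0}$, so $d=d'+d''$, while $\sum_i d'_i=s_A+(s_C-d_{i_0})+d'_{i_0}=0$ and $\sum_i d''_i=s_B+d''_{i_0}=0$; moreover $\mbox{supp}(d')\subset A\cup C=S'$ and $\mbox{supp}(d'')\subset B\cup\{i_0\}\subset S''$, so $d'\in\mathcal{D}(S')$ and $d''\in\mathcal{D}(S'')$ as required.

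It remains to establish the directional-derivative identity \eqref{eq:dirn-derv-condn}. By construction, at every coordinate other than $i_0$ at most one of $d'_i,d''_i$ is nonzero, so $(f_i^*)'(y_i;d'_i)+(f_i^*)'(y_i;d''_i)=(f_i^*)'(y_i;d_i)$ holds there trivially (one summand vanishes, and $d'_i+d''_i=d_i$). At $i_0$ the function $f_{i_0}^*$ is smooth, hence $(f_{i_0}^*)'(y_{i_0};\cdot)=\langle\nabla f_{i_0}^*(y_{i_0}),\cdot\rangle$ is linear, which yields $(f_{i_0}^*)'(y_{i_0};d'_{i_0})+(f_{i_0}^*)'(y_{i_0};d''_{i_0})=(f_{i_0}^*)'(y_{i_0};d_{i_0})$. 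Summing these coordinatewise identities over $i\in\mathcal{V}$ gives exactly $G'(y;d')+G'(y;d'')=G'(y;d)$.

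The one genuine obstacle is that the directional derivative of a convex function is only subadditive in the direction, so in general $G'(y;d'+d'')\le G'(y;d')+G'(y;d'')$, with strict inequality possible at a kink. Linearity at a point of smoothness is precisely what restores equality, which is why it is essential that $i_0$ be the \emph{only} coordinate carrying a genuine split: every nonsmooth coordinate must receive its entire share of $d$ from just one of $d',d''$. Were $S'\cap S''$ to contain no smooth vertex, the corrections $-s_A$ and $-s_B$ would have to be absorbed at a kink, where subadditivity may be strict and the identity can fail; this is exactly what the smoothness hypothesis rules out.
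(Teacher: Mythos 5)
Your proposal is correct and uses essentially the same construction as the paper: the decomposition you write down (all of $S'$ keeps its share of $d$ except that the smooth vertex $i_0$ absorbs the correction $-s_A-s_C$, while $d''$ carries $B$ and the correction $-s_B$ at $i_0$) is, coordinate by coordinate, identical to the paper's choice with $\bar{i}=i_0$, and the final step likewise rests on linearity of the directional derivative of $f_{i_0}^*$ at a point of smoothness. Your closing remark on why subadditivity of directional derivatives at a kink is the genuine obstacle is a nice articulation of what the paper leaves implicit, but it does not change the argument.
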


\begin{proof}
We only need to consider the case when $S'\cap S''\cap\mathcal{V}_{sm}\neq\emptyset$.
We want to show that if $d\in\mathcal{D}(S'\cup S'')$, then $d$
can be written as $d=d'+d''$, where $d'\in\mathcal{D}(S')$ and $d''\in\mathcal{D}(S'')$,
so that \eqref{eq:dirn-derv-condn} holds. If $S'\cap S''\cap\mathcal{V}_{sm}\neq\emptyset$,
then let $\bar{i}\in S'\cap S''\cap\mathcal{V}_{sm}$. For a given
$d\in\mathcal{D}(S'\cup S'')$, define $d'$ and $d''$ so that 
\[
d'_{i}=\begin{cases}
d_{i} & \mbox{ if }i\in S'\backslash\{\bar{i}\}\\
-\sum_{i\in S'\backslash\{\bar{i}\}}d_{i} & \mbox{ if }i=\bar{i}\\
0 & \mbox{ otherwise }
\end{cases}\mbox{ and }d''_{i}=\begin{cases}
d_{i} & \mbox{ if }i\in S''\backslash S'\\
-\sum_{i\in S''\backslash S'}d_{i} & \mbox{ if }i=\bar{i}\\
0 & \mbox{ otherwise.}
\end{cases}
\]
It is clear to see that $d=d'+d''$, $d'\in\mathcal{D}(S')$ and $d''\in\mathcal{D}(S'')$.
From the smoothness of $f_{\bar{i}}^{*}(\cdot)$, we have $[f_{\bar{i}}^{*}]'(x;d_{\bar{i}}'+d_{\bar{i}}'')=[f_{\bar{i}}^{*}]'(x;d_{\bar{i}}')+[f_{\bar{i}}^{*}]'(x;d_{\bar{i}}'')$,
which gives \eqref{eq:dirn-derv-condn} as needed.
\end{proof}
\begin{proposition}
Property (b) in Theorem \ref{thm:Conv-general-alg} is satisfied if

\begin{enumerate}
\item For all $n\geq0$ and $y$, the condition $G'(y;d)\geq0$ for all
$d\in\mathcal{D}(S_{n})$ implies the existence of KKT multipliers
of 
\begin{eqnarray*}
 & \max_{y'_{i}\in X,i\in S_{n}} & -\sum_{i\in S_{n}}f_{i}^{*}(y'_{i})\\
 & \mbox{s.t.} & \sum_{i\in S_{n}}y'_{i}=\sum_{i\in S_{n}}y_{i}
\end{eqnarray*}
at a maximizer $\bar{y}$. Specifically, there exists $x\in X$ such
that $x\in\partial f_{i}^{*}(\bar{y}_{i})$ for all $i\in S_{n}$.
\item For every $\bar{i},\bar{j}\in\mathcal{V}$, we can find a sequence
of sets $\{\tilde{S}_{k}\}_{k=1}^{K}\subset\{S_{n+r}\}_{r=1}^{T}$
such that $\bar{i}\in\tilde{S}_{1}$, $\bar{j}\in\tilde{S}_{K}$,
and $\tilde{S}_{k}\cap\tilde{S}_{k+1}\cap\mathcal{V}_{sm}\neq\emptyset$
for all $k\in\{1,\dots,K-1\}$. 
\end{enumerate}
\end{proposition}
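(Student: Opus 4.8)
The plan is to prove property (b) in Theorem~\ref{thm:Conv-general-alg} directly, by producing a \emph{single} vector $x^*\in X$ with $x^*\in\partial f_i^*(y_i)$ for all $i\in\mathcal{V}$. Once such an $x^*$ is in hand, global optimality is immediate from convexity: for any feasible $y'$ (i.e. $\sum_i y_i'=0$), the subgradient inequality gives $\sum_i f_i^*(y_i')\ge \sum_i f_i^*(y_i)+\langle x^*,\sum_i(y_i'-y_i)\rangle=G(y)$, where the inner product vanishes because both $y$ and $y'$ satisfy $\sum_i(\cdot)=0$. Hence $y$ minimizes $G$ and property (b) follows, so the whole content of the proof is the construction of $x^*$.

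First I would convert the hypothesis of property (b) into per-block data. For each $r\in\{1,\dots,T\}$, the assumption $G'(y;d)\ge 0$ for all $d\in\mathcal{D}(S_{n+r})$ says exactly that $y$, restricted to $S_{n+r}$, is stationary for the block subproblem in condition (1); since that subproblem is convex, stationarity identifies $y$ as a maximizer $\bar y$, and condition (1) then furnishes a common multiplier $x_r\in X$ with $x_r\in\partial f_i^*(y_i)$ for every $i\in S_{n+r}$. The remaining task is to glue $x_1,\dots,x_T$ into one global $x^*$.

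The gluing is carried by the smooth vertices. If $s\in S_{n+r}\cap\mathcal{V}_{sm}$, then $\partial f_s^*(y_s)=\{\nabla f_s^*(y_s)\}$ is a singleton, forcing $x_r=\nabla f_s^*(y_s)$; in particular, two blocks sharing a smooth vertex $s$ have equal multipliers $x_r=\nabla f_s^*(y_s)=x_{r'}$. Applying condition (2) to a pair of smooth vertices $s,s'$ yields a chain $\tilde S_1\ni s,\dots,\tilde S_K\ni s'$ whose consecutive members overlap in $\mathcal{V}_{sm}$; propagating the equality of multipliers along the chain and using that both endpoints are smooth gives $\nabla f_s^*(y_s)=\nabla f_{s'}^*(y_{s'})$, so all smooth vertices share one gradient value, which I name $x^*$. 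Finally, for an arbitrary $j\in\mathcal{V}$, I would apply condition (2) to the pair $(j,s_0)$ with $s_0$ a fixed smooth vertex: the resulting chain starts at some $\tilde S_1\ni j$, its common multiplier equals $x^*$ by the same propagation, and $j\in\tilde S_1$ gives $x^*\in\partial f_j^*(y_j)$. This produces $x^*\in\partial f_i^*(y_i)$ for every $i$, closing the argument.

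The hard part is precisely this gluing. Because a non-smooth $f_i^*$ has a set-valued subdifferential, the block multipliers $x_r$ from condition (1) are genuinely non-unique, and local multipliers of different blocks have no reason to coincide unless something pins them down; the smooth vertices are that pin (their subdifferentials are singletons), and condition (2) is exactly the demand that smooth vertices connect the whole graph so that one consistent $x^*$ can propagate everywhere. I would also flag the degenerate regime $\mathcal{V}_{sm}=\emptyset$ (equivalently, chains forced to have length one), in which condition (2) merely asserts that every pair of vertices lies in a common block: here pairwise common multipliers need not intersect in a single global multiplier, so this case must either be argued separately or is implicitly outside the intended scope.
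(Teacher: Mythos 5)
Your argument is correct and takes essentially the same route as the paper's own (very terse, two-sentence) proof: extract a block multiplier from condition (1), note that a shared smooth vertex forces the multipliers of two overlapping blocks to coincide because the subdifferential there is a singleton, and chain via condition (2) to a single global $x^*$ that certifies optimality of $y$ over the affine feasible set. Your write-up is simply more complete --- in particular the explicit final subgradient-inequality step and the flag on the degenerate case $\mathcal{V}_{sm}=\emptyset$ are both points the paper leaves unaddressed.
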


\begin{proof}
Recall the $y$ in property (b) in Theorem \ref{thm:Conv-general-alg}.
Through condition (2), it suffices to prove that if $\tilde{S}_{1}$
and $\tilde{S}_{2}$ are such that $\tilde{S}_{1}\cap\tilde{S}_{2}\cap\mathcal{V}_{sm}\neq\emptyset$,
then there exists $x$ such that $x\in\partial f_{i}^{*}(y_{i})$
for all $i\in\tilde{S}_{1}\cup\tilde{S}_{2}$, which is in turn easy
from condition (1). 
\end{proof}
In Example \ref{exa:alg-is-stuck}, we see that $2\notin\mathcal{V}_{sm}$,
so Theorem \ref{thm:Conv-general-alg} does not apply.

\subsection{Connection between Sections \ref{sec:conv-Dyk} and \ref{sec:non-strongly-convex-alg}}

We now give a connection between the algorithms in the Sections \ref{sec:conv-Dyk}
and \ref{sec:non-strongly-convex-alg}. For a graph $(\mathcal{V},\mathcal{E})$,
construct the graph $(\mathcal{V}^{+},\mathcal{E}^{+})$ via 
\begin{eqnarray*}
\mathcal{V}^{+} & = & \mathcal{V}\times\{0,1\},\\
\mathcal{E}^{+} & = & \big\{\big((i,0),(j,0)\big):(i,j)\in\mathcal{E}\big\}\cup\big\{\big((i,0),(i,1)\big):i\in\mathcal{V}\big\}.
\end{eqnarray*}
One can easily check that $|\mathcal{V}^{+}|=2|\mathcal{V}|$ and
$|\mathcal{E}^{+}|=|\mathcal{E}|+|\mathcal{V}|$. Let the function
associated with the vertex $(i,s)$ be $f_{i,s}:X\to\bar{\mathbb{R}}$
defined by 
\[
f_{(i,s)}(x)=\begin{cases}
\frac{1}{2}\|x-[x_{0}]_{i}\|^{2} & \mbox{ if }s=0\\
f_{i}(x) & \mbox{\,if }s=1.
\end{cases}
\]
Note that $\min_{x\in X}\sum_{(i,s)\in\mathcal{V}^{+}}f_{(i,s)}(x)$
is equivalent to the problem \eqref{eq:separate_obj} considered in
Dykstra's algorithm. 

The dual problem 
\[
\max_{y_{(i,s)}\in X,(i,s)\in\mathcal{V}^{+}}\left\{ -\sum_{(i,s)\in\mathcal{V}^{+}}f_{(i,s)}^{*}(y_{(i,s)}):\sum_{(i,s)\in\mathcal{V}^{+}}y_{(i,s)}=0\right\} 
\]
 (recall how \eqref{eq:better-dual} is derived as the dual of \eqref{eq:centralized-primal})
can be simplified to be 
\begin{eqnarray}
 & \underset{y_{i,s}\in X:i\in\mathcal{V},s\in\{0,1\}}{\max} & -\sum_{i\in\mathcal{V}}f_{i}^{*}(y_{i,1})-\sum_{i\in\mathcal{V}}\left[\frac{1}{2}\|y_{i,0}+[x_{0}]_{i}\|^{2}-\frac{1}{2}\|[x_{0}]_{i}\|^{2}\right]\label{eq:first-common-dual}\\
 & \mbox{s.t.} & \sum_{i\in\mathcal{V}}(y_{i,0}+y_{i,1})=0.\nonumber 
\end{eqnarray}
Recall the dual problem in \eqref{eq:dual-fn} and \eqref{eq:Dykstra-dual-defn}.
Define the variable $z_{e}\in X^{|\mathcal{V}|}$ to be 
\[
z_{e}:=\sum_{(i,j)\in\mathcal{E}}z_{(i,j)}.
\]
Suppose $z_{(i,j)}\in H_{(i,j)}^{\perp}$. Then $\delta_{H_{(i,j)}}^{*}(z_{(i,j)})=0$.
Also, $z_{(i,j)}\in H_{(i,j)}^{\perp}\subset D^{\perp}$, so $z_{e}\in D^{\perp}$.
The dual problem in \eqref{eq:dual-fn} and \eqref{eq:Dykstra-dual-defn}
becomes 
\begin{eqnarray}
 & \underset{\{z_{e}\in D^{\perp}\}\cup\{z_{\alpha}\in X^{|\mathcal{V}|}:\alpha\in\mathcal{V}\}}{\max} & -\sum_{i\in\mathcal{V}}\mathbf{f}_{i}^{*}(z_{i})-\frac{1}{2}\left\Vert \sum_{i\in\mathcal{V}}z_{i}+z_{e}-x_{0}\right\Vert ^{2}+\frac{1}{2}\|x_{0}\|^{2}.\label{eq:second-common-dual}
\end{eqnarray}
We now show how \eqref{eq:first-common-dual} and \eqref{eq:second-common-dual}
are related. 
\begin{proposition}
\label{prop:1-1-corr}Consider the problems \eqref{eq:first-common-dual}
and \eqref{eq:second-common-dual}.

\begin{enumerate}
\item The two problems are related through a change of variables. Specifically,
if $\{y_{(i,s)}\}_{(i,s)\in\mathcal{V}^{+}}\subset X$ were obtained
from $\{z_{i}\}_{i\in\mathcal{V}}\subset X^{|\mathcal{V}|}$ and $z_{e}\in X^{|\mathcal{V}|}$
by 
\[
y_{i,1}=[z_{i}]_{i}\mbox{ and }y_{i,0}=-[z_{e}]_{i}-[z_{i}]_{i}\mbox{ for all }i\in\mathcal{V},
\]
then the objective values in \eqref{eq:second-common-dual} and \eqref{eq:first-common-dual}
coincide. Conversely, if $\{z_{i}\}_{i\in\mathcal{V}}$ and $z_{e}$
were obtained from $\{y_{(i,s)}\}_{(i,s)\in\mathcal{V}^{+}}$ by 
\[
[z_{i}]_{j}=\begin{cases}
0 & \mbox{ if }j\neq i\\
y_{i,1} & \mbox{ if }j=i
\end{cases}\mbox{ and }[z_{e}]_{i}=-y_{i,0}-y_{i,1}\mbox{ for all }i,j\in\mathcal{V},
\]
then $z_{e}\in D^{\perp}$ and the objective values in \eqref{eq:second-common-dual}
and \eqref{eq:first-common-dual} coincide.
\item Let $S^{1}\subset\mathcal{V}^{+}$ be a connected subset of vertices
in the graph $(\mathcal{V}^{+},\mathcal{E}^{+})$ so that $|S^{1}|>1$.
Define $\Pi_{0}S^{1}\subset\mathcal{V}$ to be the set 
\[
\Pi_{0}S^{1}:=\{i\in\mathcal{V}:(i,0)\in S^{1}\}.
\]
Let $\Pi_{1}S^{1}$ be similarly defined. With respect to the graph
$(\mathcal{V},\mathcal{E})$, suppose that there is a subset $\mathcal{E}'$
of $\mathcal{E}$ not containing any cycles that connects all the
vertices in $\Pi_{0}S^{1}$. Since $S^{1}$ is a subset of connected
vertices in the graph $(\mathcal{V}^{+},\mathcal{E}^{+})$, we have
$\Pi_{1}S^{1}\subset\Pi_{0}S^{1}$. \\
Suppose that for a fixed $\{y_{(i,s)}\}_{(i,s)\in\mathcal{V}^{+}}\subset X$,
a subproblem of \eqref{eq:first-common-dual} is solved with only
variables $y_{(i,s)}$ indexed by $S^{1}$ allowed to vary while the
other variables stay fixed. Then under the change of variables in
(1), this subproblem is equivalent to solving the subproblem in \eqref{eq:second-common-dual}
where 

\begin{enumerate}
\item $[z_{e}]_{i}$ is allowed to vary if and only if $i$ is an endpoint
of some edge in $\mathcal{E}'$, and 
\item $z_{i}$ is allowed to vary if and only if $i\in\Pi_{1}S^{1}$. 
\end{enumerate}
\end{enumerate}
\end{proposition}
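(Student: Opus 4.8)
The plan is to handle the two parts in turn: part (1) is a direct computation showing the stated substitution preserves both objective and constraint, and part (2) is a bookkeeping argument tracking which variables are frozen once that substitution is applied to a subproblem.

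For part (1), I would first observe that $\mathbf{f}_i(x)=f_i(x_i)$ forces $\mathbf{f}_i^*(z_i)=f_i^*([z_i]_i)$ whenever $[z_i]_j=0$ for $j\neq i$ and $+\infty$ otherwise; combined with the sparsity in Proposition \ref{prop:sparsity}, this identifies $\mathbf{f}_i^*(z_i)=f_i^*(y_{i,1})$ term by term. The heart of the calculation is the quadratic term. Working coordinatewise and using $[z_i]_j=0$ for $j\neq i$, I would compute
\[
\Big[\sum_{i\in\mathcal{V}}z_i+z_e-x_0\Big]_j=y_{j,1}+[z_e]_j-[x_0]_j=-y_{j,0}-[x_0]_j,
\]
where the last equality inserts $[z_e]_j=-y_{j,0}-y_{j,1}$. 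Summing squared norms over $j$ matches $\tfrac12\|\sum_i z_i+z_e-x_0\|^2$ against $\tfrac12\sum_i\|y_{i,0}+[x_0]_i\|^2$, so that the objectives of \eqref{eq:second-common-dual} and \eqref{eq:first-common-dual} agree once $\tfrac12\|x_0\|^2=\tfrac12\sum_i\|[x_0]_i\|^2$ is split across the vertices. Finally, the constraint $z_e\in D^\perp$ reads $\sum_i[z_e]_i=0$ by \eqref{eq:D-and-D-perp}, which under the substitution is exactly $\sum_i(y_{i,0}+y_{i,1})=0$; the converse substitution is the coordinatewise inverse of the same linear map, so $z_e\in D^\perp$ is automatic and the objectives again coincide.

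For part (2), I would note that the subproblem of \eqref{eq:first-common-dual} indexed by $S^1$ varies exactly $\{y_{i,0}\}_{i\in\Pi_0 S^1}$ and $\{y_{i,1}\}_{i\in\Pi_1 S^1}$, holding the rest fixed, subject to the single global equality. The map of part (1) is, coordinate by coordinate, the invertible linear correspondence $(y_{i,0},y_{i,1})\leftrightarrow([z_e]_i,[z_i]_i)$ with $[z_i]_i=y_{i,1}$ and $[z_e]_i=-y_{i,0}-y_{i,1}$. Reading off which variables move: $z_i$ varies precisely when $y_{i,1}$ does, i.e. when $i\in\Pi_1 S^1$, which is (b). For (a), freezing $\{y_{j,0},y_{j,1}\}_{j\notin S^1}$ freezes $[z_e]_j$ for $j\notin\Pi_0 S^1$ (here using $\Pi_1 S^1\subset\Pi_0 S^1$), while the global constraint becomes $\sum_{i\in\Pi_0 S^1}[z_e]_i=-\sum_{j\notin\Pi_0 S^1}[z_e]_j=\mathrm{const}$, so the admissible $z_e$ are exactly those whose restriction to $\Pi_0 S^1$ is a fixed-sum perturbation supported on $\Pi_0 S^1$.

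The remaining step, and the main obstacle, is to translate this fixed-sum freedom on $\Pi_0 S^1$ into the edge description via $\mathcal{E}'$. Here I would invoke that $(\Pi_0 S^1,\mathcal{E}')$ is connected and acyclic, so by the direct-sum construction in the proof of Lemma \ref{lem:express-v-as-sum} (equivalently Proposition \ref{prop:E-connects-V}) the assignment $\{z_{(i,j)}\}_{(i,j)\in\mathcal{E}'}\mapsto\sum_{(i,j)\in\mathcal{E}'}z_{(i,j)}$ is a linear isomorphism onto the space of vectors supported on $\Pi_0 S^1$ with zero coordinate-sum, since each $z_{(i,j)}\in H_{(i,j)}^\perp$ places $+w$ at $i$ and $-w$ at $j$. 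Hence perturbing $z_e$ within the fixed-sum family on $\Pi_0 S^1$ is equivalent to perturbing the edge variables on $\mathcal{E}'$, and the coordinates of $z_e$ that genuinely move are exactly the endpoints of edges of $\mathcal{E}'$. This is what makes the $\mathcal{E}'$ formulation exact rather than merely "$\Pi_0 S^1$": when $|\Pi_0 S^1|=1$ the tree $\mathcal{E}'$ is empty, the lone coordinate is pinned by the fixed sum, and indeed no $[z_e]_i$ varies. Combining the matched objectives from part (1) with this identification of the moving variables yields the claimed equivalence of subproblems.
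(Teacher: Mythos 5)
Your proposal is correct and follows essentially the same route as the paper: part (1) is the same direct coordinatewise verification that the substitution matches the objectives term by term and turns the sum constraint into $z_e\in D^{\perp}$, and part (2) is the same bookkeeping of which variables each frozen/free coordinate controls. The one place you go beyond the paper is in making explicit the tree isomorphism between $\{z_{(i,j)}\}_{(i,j)\in\mathcal{E}'}$ and the fixed-sum perturbations of $z_e$ supported on $\Pi_0 S^1$ (including the degenerate case $|\Pi_0 S^1|=1$), a step the paper only asserts in the remark following the proposition; this is a welcome addition rather than a deviation.
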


\begin{proof}
Statement 1 is obvious from the constructions. 

We now work on Statement 2. From $|S^{1}|>1$ and the definition of
$(\mathcal{V}^{+},\mathcal{E}^{+})$, if $(i,1)\in\mathcal{V}^{+}$,
then $(i,0)\in\mathcal{V}^{+}$. So for each $i\in\mathcal{V}$, there
are three cases: (1) $(i,1)\notin\mathcal{V}^{+}$ and $(i,0)\notin\mathcal{V}^{+}$
(in which case there is nothing to do), (2) $(i,0)\in\mathcal{V}^{+}$
and $(i,1)\notin\mathcal{V}^{+}$ and (3) $(i,0)\in\mathcal{V}^{+}$
and $(i,1)\in\mathcal{V}^{+}$. 

In case (2), if the term $y_{i,0}$ is in $S^{1}$, then $y_{i,0}$
affects only the $\frac{1}{2}\|y_{i,0}+[x_{0}]_{i}\|^{2}$ in \eqref{eq:first-common-dual}.
In turn, $[z_{e}]_{i}$ only affects $\frac{1}{2}\|[z_{e}]_{i}+[z_{i}]_{i}-[x_{0}]_{i}\|^{2}$
in the quadratic term in \eqref{eq:second-common-dual}. 

In case (3), it is clear that the term $f_{i}(y_{i,1})$ varies through
$y_{i,1}$ if and only if $\mathbf{f}_{i}(z_{i})$ varies through
$z_{i}$. Recall that $\Pi_{0}S^{1}\subset\Pi_{1}S^{1}$, so if the
term $y_{i,1}$ is in $S^{1}$, then $y_{i,0}$ is in $S^{1}$. The
terms $y_{i,1}$ and $y_{i,0}$ then combine to affect $f_{i}^{*}(y_{i,1})+\frac{1}{2}\|y_{i,0}+[x_{0}]_{i}\|^{2}$.
Correspondingly, $[z_{e}]_{i}$ and $[z_{i}]_{i}$ combine to affect
$\mathbf{f}_{i}^{*}(z_{i})+\frac{1}{2}\|[z_{e}]_{i}+[z_{i}]_{i}-[x_{0}]_{i}\|^{2}$.

To wrap up, note that the constraint in \eqref{eq:first-common-dual}
corresponds to $z_{e}\in D^{\perp}$. 
\end{proof}
One can easily figure out that (2a) in Proposition \ref{prop:1-1-corr}
corresponds to varying $z_{(i,j)}$ for all $(i,j)\in\mathcal{E}'$
in the original dual problem of \eqref{eq:dual-fn} and \eqref{eq:Dykstra-dual-defn}.

\section{\label{sec:Accelerate}Accelerated methods for \eqref{eq:dual-fn}}

In this section, we write down an accelerated proximal gradient (APG)
algorithm \cite{Nesterov_1983,BeckTeboulle2009,Tseng_APG_2008} on
the dual problem described through \eqref{eq:dual-fn} and \eqref{eq:Dykstra-dual-defn}
that allows for greedy steps that can be performed asynchronously.
Before we continue, we remark that we had shown that an APG with greedy
steps can be performed on the formulation \eqref{eq:illus-dyk-D}
in \cite{Pang_DBAP}. We point out that the APG derived from \eqref{eq:dual-fn}
and \eqref{eq:Dykstra-dual-defn} has a much lower Lipschitz constant
and allows for greedy steps of the form \eqref{eq:Dykstra-min-subpblm}. 

We first recall a variant of the accelerated proximal gradient in
\cite{Tseng_APG_2008}. In view of the clash of variables, we substitute
the variables $x$, $y$ and $z$ in \cite{Tseng_APG_2008} to be
$u$, $v$ and $w$, and then substitute the $f^{P}(\cdot)$ in \cite{Tseng_APG_2008}
for the function $-F(\cdot)$ in \eqref{eq:Dykstra-dual-defn}. (Their
algorithm includes allowing for the domain of the optimization problem
for $w^{k+1}$ to change in each iteration, which we omit.) Let 
\begin{align}
l(u,v) & =\underbrace{\frac{1}{2}\left\Vert x_{0}-\sum_{\alpha\in\mathcal{E}\cup\mathcal{V}}v_{\alpha}\right\Vert ^{2}-\frac{1}{2}\|x_{0}\|^{2}}\label{eq:l-u-v}\\
 & \phantom{=}+\underbrace{\left\langle \left(\sum_{\alpha\in\mathcal{E}\cup\mathcal{V}}v_{\alpha}\right)-x_{0},\sum_{\alpha\in\mathcal{E}\cup\mathcal{V}}(u_{\alpha}-v_{\alpha})\right\rangle }+\underbrace{\sum_{\alpha\in\mathcal{E}\cup\mathcal{V}}h_{\alpha}(u_{\alpha})}\nonumber 
\end{align}

\begin{remark}

\label{rem:APG-details} We now explain the formula $l(u,v)$ above.
In \cite{Tseng_APG_2008}, the function that Algorithm \ref{alg:Tseng-2008}
aims to minimize was $f^{P}(u)=f(u)+P(u)$, where $f(\cdot)$ is smooth
and $P(\cdot)$ admits an easy calculation of its proximal, and has
a linearization $f(v)+\langle\nabla f(v),u-v\rangle+P(u)$. The underbraced
terms in \eqref{eq:l-u-v} play the role of the terms $f(v)$, $\langle\nabla f(v),u-v\rangle$
and $P(u)$ in the linearization of $-F(u)$ in \eqref{eq:Dykstra-dual-defn}. 

\end{remark}
\begin{algorithm}[H]
\caption{From \cite[Algorithm 1]{Tseng_APG_2008}}
\label{alg:Tseng-2008}

We want to find $u$ to minimize $-F(\cdot)$, which is the sum of
a convex smooth function and a convex separable function. Choose $\theta_{0}\in(0,1]$,
$u^{0}$, $w^{0}\in\mbox{dom}(P)$. Let $L>0$ be such that 
\begin{equation}
\begin{array}{c}
l(u,v)+\frac{L}{2}\|u-v\|^{2}\geq-F(u)\mbox{ for all }u,v\in[X^{|\mathcal{V}|}]^{|\mathcal{V}\cup\mathcal{E}|}.\end{array}\label{eq:show-cond}
\end{equation}
Go to 1.

1. Let 
\begin{eqnarray*}
v^{k} & = & (1-\theta_{k})u^{k}+\theta_{k}w^{k}\\
w^{k+1} & = & \underset{u\in X}{\arg\min}\{l(u;v^{k})+\theta_{k}L\frac{1}{2}\|u-w^{k}\|^{2})\},\\
\hat{u}^{k+1} & = & (1-\theta_{k})u^{k}+\theta_{k}w^{k+1},
\end{eqnarray*}

Choose $u_{k+1}$ to be such that 
\begin{equation}
\begin{array}{c}
-F(u^{k+1})\leq l(\hat{u}^{k+1};v^{k})+\frac{L}{2}\|\hat{u}^{k+1}-v^{k}\|^{2}.\end{array}\label{eq:Tseng-14}
\end{equation}

Choose $\theta_{k+1}\in(0,1]$ satisfying 
\[
\begin{array}{c}
\frac{1-\theta_{k+1}}{\theta_{k+1}^{2}}\leq\frac{1}{\theta_{k}^{2}}.\end{array}
\]

$k\leftarrow k+1$, and go to 1. 
\end{algorithm}
\begin{remark}

Algorithm \ref{alg:Tseng-2008} requires the condition \eqref{eq:show-cond}.
Since $l(\cdot,v)$ is the linearization of $-F(\cdot)$ at $v$,
we have $l(v,v)=-F(v)$. Since the smooth portions of both $l(u,v)+\frac{L}{2}\|u-v\|^{2}$
and $-F(\cdot)$ are quadratics, showing \eqref{eq:show-cond} is
equivalent to finding $L>0$ such that the Hessian of the smooth portion
of $l(u,v)+\frac{L}{2}\|u-v\|^{2}$ is greater than that of $-F(\cdot)$,
i.e., 
\begin{equation}
\frac{L}{2}\sum_{\alpha\in\mathcal{E}\cup\mathcal{V}}\|u_{\alpha}\|^{2}\geq\frac{1}{2}\left\Vert \sum_{\alpha\in\mathcal{E}\cup\mathcal{V}}u_{\alpha}\right\Vert ^{2}.\label{eq:choose-L}
\end{equation}

\end{remark}

Since the variables $\{u_{\alpha}\}_{\alpha\in\mathcal{E}\cup\mathcal{V}}$
are to satisfy the sparsity pattern in Proposition \ref{prop:sparsity},
we show that $L$ can be chosen as follows.
\begin{proposition}
(Choice of $L$ in Algorithm \ref{alg:Tseng-2008}) In order to satisfy
\eqref{eq:choose-L} while obeying the sparsity pattern in Proposition
\ref{prop:sparsity}, we can choose $L$ to be $\bar{d}+1$, where
$\bar{d}$ is the maximum degree of the vertices in the graph.
\end{proposition}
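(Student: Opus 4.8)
The plan is to verify the inequality \eqref{eq:choose-L} directly, exploiting the sparsity pattern of Proposition \ref{prop:sparsity} to reduce it to an elementary per-coordinate estimate combined with the standard bound $\|\sum_{t=1}^{m}a_{t}\|^{2}\le m\sum_{t=1}^{m}\|a_{t}\|^{2}$ (a consequence of Cauchy--Schwarz, or of the convexity of $\|\cdot\|^{2}$). The key observation is that although each $u_{\alpha}$ lives in $X^{|\mathcal{V}|}$, only a few of the $u_{\alpha}$ are nonzero in any fixed coordinate, and the number that overlap at coordinate $k$ is governed by the degree of vertex $k$.

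First I would expand the right-hand side of \eqref{eq:choose-L} coordinatewise, writing $\|\sum_{\alpha\in\mathcal{E}\cup\mathcal{V}}u_{\alpha}\|^{2}=\sum_{k\in\mathcal{V}}\|\sum_{\alpha\in\mathcal{E}\cup\mathcal{V}}[u_{\alpha}]_{k}\|^{2}$. Fixing a coordinate $k\in\mathcal{V}$, Proposition \ref{prop:sparsity} shows that $[u_{\alpha}]_{k}=0$ unless $\alpha=k$ (the vertex variable at $k$) or $\alpha=(i,j)$ is an edge incident to $k$; hence the inner sum has at most $1+\deg(k)\le 1+\bar{d}$ nonzero terms. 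Applying the inequality $\|\sum_{t}a_{t}\|^{2}\le m\sum_{t}\|a_{t}\|^{2}$ with $m=\deg(k)+1\le\bar{d}+1$ gives
\[
\Big\|\sum_{\alpha}[u_{\alpha}]_{k}\Big\|^{2}\le(\bar{d}+1)\Big(\|[u_{k}]_{k}\|^{2}+\sum_{(i,j)\in\mathcal{E},\,k\in\{i,j\}}\|[u_{(i,j)}]_{k}\|^{2}\Big).
\]

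Finally I would sum this over $k\in\mathcal{V}$ and interchange the order of summation in the edge term, so that $\sum_{k}\sum_{(i,j):k\in\{i,j\}}\|[u_{(i,j)}]_{k}\|^{2}=\sum_{(i,j)\in\mathcal{E}}\big(\|[u_{(i,j)}]_{i}\|^{2}+\|[u_{(i,j)}]_{j}\|^{2}\big)$. Invoking Proposition \ref{prop:sparsity} once more identifies the bracketed quantities as full norms, namely $\sum_{k}\|[u_{k}]_{k}\|^{2}=\sum_{i\in\mathcal{V}}\|u_{i}\|^{2}$ and $\|[u_{(i,j)}]_{i}\|^{2}+\|[u_{(i,j)}]_{j}\|^{2}=\|u_{(i,j)}\|^{2}$, whence the total equals exactly $(\bar{d}+1)\sum_{\alpha\in\mathcal{E}\cup\mathcal{V}}\|u_{\alpha}\|^{2}$. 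Dividing by $2$ yields \eqref{eq:choose-L} with $L=\bar{d}+1$. There is no genuine obstacle beyond the indexing bookkeeping; the only point to watch is that the counting at each coordinate uses the degree of that specific vertex, so that taking the worst case over all vertices produces the uniform constant $\bar{d}+1$, which is attained at a maximum-degree vertex and is therefore the best constant obtainable from this coordinatewise argument.
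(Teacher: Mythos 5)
Your proof is correct and follows essentially the same route as the paper's: both reduce \eqref{eq:choose-L} coordinatewise via the sparsity of Proposition \ref{prop:sparsity}, observe that at coordinate $k$ only the vertex variable $u_{k}$ and the $\deg(k)\le\bar{d}$ incident edge variables contribute, and then apply the elementary bound $\|\sum_{t=1}^{m}a_{t}\|^{2}\le m\sum_{t=1}^{m}\|a_{t}\|^{2}$ (which the paper phrases equivalently as positive semidefiniteness of $(\bar{d}+1)I$ minus an all-ones block matrix).
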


\begin{proof}
We look at the $i$-th component $\{[u_{\alpha}]_{i}\}_{\alpha\in\mathcal{E}\cup\mathcal{V}}$
of the terms in \eqref{eq:choose-L} for all $i\in\mathcal{V}$. As
long as we can prove that 
\begin{equation}
\frac{\bar{d}+1}{2}\sum_{\alpha\in\mathcal{E}\cup\mathcal{V}}\|[u_{\alpha}]_{i}\|^{2}\geq\frac{1}{2}\left\Vert \sum_{\alpha\in\mathcal{E}\cup\mathcal{V}}[u_{\alpha}]_{i}\right\Vert ^{2},\label{eq:choose-L-2}
\end{equation}
the conclusion will follow. In view of the sparsity pattern of the
$u$'s in Proposition \ref{prop:sparsity}, most of the $[u_{\alpha}]_{i}$'s
are zero. For all $i\in\mathcal{V}$, we define $\tilde{\mathcal{E}}_{i}$
to be the set of all edges $e\in\mathcal{E}$ such that one of the
endpoints is $i$. Then \eqref{eq:choose-L-2} reduces to 
\begin{equation}
\begin{array}{c}
\frac{\bar{d}+1}{2}\left(\|[u_{i}]_{i}\|^{2}+\underset{e\in\tilde{\mathcal{E}_{i}}}{\sum}\|[u_{e}]_{i}\|^{2}\right)\geq\frac{1}{2}\left\Vert [u_{i}]_{i}+\underset{e\in\tilde{\mathcal{E}_{i}}}{\sum}[u_{e}]_{i}\right\Vert ^{2},\end{array}\label{eq:choose-L-3}
\end{equation}
We form the vector $\tilde{u}\in X^{|\tilde{\mathcal{E}}_{i}|+1}$
so that it contains $[u_{i}]_{i}$ and $\{[u_{e}]_{i}\}_{e\in\tilde{\mathcal{E}}_{i}}$
as its components. The formula \eqref{eq:choose-L-3} can be seen
to be equivalent to 
\[
\begin{array}{c}
\tilde{u}^{T}\left[\left(\begin{array}{cccc}
(\bar{d}+1)I\\
 & (\bar{d}+1)I\\
 &  & \ddots\\
 &  &  & (\bar{d}+1)I
\end{array}\right)-\left(\begin{array}{cccc}
I & I & \cdots & I\\
I & I & \cdots & I\\
\vdots & \vdots & \ddots & \vdots\\
I & I & \cdots & I
\end{array}\right)\right]\tilde{u}\geq0,\end{array}
\]
which is clearly true. Thus we are done. 
\end{proof}
If we had used the formulation in \eqref{eq:Dykstra-min-subpblm}
without exploiting the sparsity in Proposition \ref{prop:sparsity},
then the corresponding $L$ would be $|\mathcal{V}\cup\mathcal{E}|$,
which is a much larger number than $\bar{d}+1$ in most large graphs.
Recall that $L$ was chosen so that \eqref{eq:show-cond} holds, and
$L$ should be as small as possible subject to this condition so that
the step for calculating $w^{k+1}$ in Algorithm \ref{alg:Tseng-2008}
would be minimizing a function closer to $-F(\cdot)$. This lower
value of $\bar{d}+1$ is one advantage of applying the APG on the
dual problem from \eqref{eq:dual-fn} and \eqref{eq:Dykstra-dual-defn}. 

We recall the convergence result of Algorithm \ref{alg:Tseng-2008}.
\begin{theorem}
\label{thm:Tseng-alg-conv}\cite[Corollary 1]{Tseng_APG_2008} (Convergence
of Algorithm \ref{alg:Tseng-2008}) Let 
\[
\{(u^{k},v^{k},w^{k},\theta_{k},X_{k})\}_{k}
\]
 be generated by Algorithm \ref{alg:Tseng-2008} with $\theta_{0}=1$.
For any $\epsilon>0$. Suppose $\theta_{k}\leq\frac{2}{k+2}$. Then
for any $u\in\mbox{dom}(P)$ with $-F(u)\leq\inf\,F+\epsilon$, we
have 
\[
\begin{array}{c}
\underset{i=0,1\dots,k+1}{\min}\{-F(u^{i})\}\leq-F(u)+\epsilon\mbox{ whenever }k\geq\sqrt{\frac{4L}{\epsilon}}\|x-x^{0}\|-2.\end{array}
\]
\end{theorem}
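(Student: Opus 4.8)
The plan is to recognize Theorem \ref{thm:Tseng-alg-conv} as a verbatim specialization of the general accelerated proximal gradient result \cite[Corollary 1]{Tseng_APG_2008}, so the real work is confirming that Algorithm \ref{alg:Tseng-2008}, as instantiated here, meets Tseng's standing hypotheses rather than reproving the rate from scratch. First I would check that $-F(\cdot)$ from \eqref{eq:Dykstra-dual-defn} has the required structure: it splits as a convex smooth quadratic $\frac{1}{2}\|x_0-\sum_\alpha z_\alpha\|^2-\frac{1}{2}\|x_0\|^2$ plus the separable closed convex part $\sum_\alpha h_\alpha^*(\cdot)$ with easily computed proximal steps, and the three underbraced terms of $l(u,v)$ in \eqref{eq:l-u-v} realize exactly the linearization $f(v)+\langle\nabla f(v),u-v\rangle+P(u)$ demanded by Tseng's framework, as already spelled out in Remark \ref{rem:APG-details}. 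The one quantitative hypothesis, the descent condition \eqref{eq:show-cond} with $L=\bar d+1$, is precisely the content of the preceding proposition, so it is already discharged.

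Next I would observe that the updates for $v^k$, $w^{k+1}$ and $\hat u^{k+1}$, the sufficient-decrease selection rule \eqref{eq:Tseng-14} for $u^{k+1}$, the initialization $\theta_0=1$, and the recursion $\frac{1-\theta_{k+1}}{\theta_{k+1}^2}\le\frac{1}{\theta_k^2}$ reproduce Tseng's Algorithm 1 line for line by construction. With these matched, \cite[Corollary 1]{Tseng_APG_2008} applies directly and delivers the stated complexity bound $\min_{i}\{-F(u^i)\}\le -F(u)+\epsilon$ once $k\ge\sqrt{4L/\epsilon}\,\|x-x^0\|-2$.

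For completeness I would indicate the engine behind the rate, since it clarifies the two features peculiar to this statement. The mechanism is the standard estimate-sequence argument: combining the descent inequality \eqref{eq:Tseng-14} with convexity of $-F$ and the prox-optimality defining $w^{k+1}$ yields a one-step contraction of the Lyapunov quantity $\frac{1}{\theta_k^2}\big(-F(u^{k+1})+F(u)\big)+\frac{L}{2}\|w^{k+1}-u\|^2$; telescoping this recursion and inserting $\theta_k\le\frac{2}{k+2}$ gives an $O\big(L/(k+2)^2\big)$ bound on $-F(u^{k+1})+F(u)$, which is below $\epsilon$ exactly at the stated threshold on $k$.

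The main subtlety, and the only point requiring care beyond a mechanical citation, is that the comparison point $u$ is assumed merely to be $\epsilon$-optimal, $-F(u)\le\inf F+\epsilon$, rather than an exact dual maximizer. This is essential here because the dual \eqref{eq:dual-fn} may fail to attain its supremum (the very phenomenon this paper emphasizes, where the primal iterates still converge although no dual optimizer exists). What I expect to be worth stating explicitly is that the estimate-sequence recursion never uses attainment of $\inf F$: it needs only the trivial lower bound $-F(u^{k+1})\ge\inf F$ and the finite distance $\|u-x^0\|$ to the chosen $\epsilon$-optimal $u$, so the whole argument passes through unchanged with a non-attained infimum. Confirming this insensitivity to attainment, not any computation, is the part I would single out.
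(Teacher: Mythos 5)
Your proposal is correct and matches the paper's approach: the paper gives no proof of this theorem at all, simply importing it as \cite[Corollary 1]{Tseng_APG_2008} after the preceding proposition discharges the descent condition \eqref{eq:show-cond} with $L=\bar d+1$, exactly as you describe. The only detail you gloss over is that the algorithm here is not quite ``line for line'' Tseng's: the sufficient-decrease test \eqref{eq:Tseng-14} uses $\hat u^{k+1}$ on the right-hand side rather than $u^{k+1}$, a discrepancy the paper flags in the remark following the theorem and resolves by noting that Tseng's proof carries over unchanged.
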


In the particular case where there is a minimizer $u^{*}$, Theorem
\ref{thm:Tseng-alg-conv} says that an $\epsilon$-optimal solution
for $-F(\cdot)$ is obtained if the number of iterations $k$ is the
Nesterov accelerated rate of $O(\sqrt{\frac{1}{\epsilon}})$ \cite{Nesterov_1983,BeckTeboulle2009,Tseng_APG_2008}.
In the case of Dykstra's algorithm (or block coordinate minimization),
the number of iterations needed to obtain an $\epsilon$-optimal solution
is typically $O(\frac{1}{\epsilon})$ (see for example \cite{Beck_Tetruashvili_2013,Beck_alt_min_SIOPT_2015}),
which is slower than the $O(\sqrt{\frac{1}{\epsilon}})$ rate. 

\begin{remark}

(On Theorem \ref{thm:Tseng-alg-conv}) The proof of Theorem \ref{thm:Tseng-alg-conv}
in \cite{Tseng_APG_2008} is for the algorithm with a modified \eqref{eq:Tseng-14},
with the left hand side being $l(u^{k+1};v^{k})+\frac{L}{2}\|u^{k+1}-v^{k}\|^{2}$
instead. But the proof in \cite{Tseng_APG_2008} carries over with
no changes at all.

\end{remark}

We now elaborate on how the greedy step can be applied to Algorithm
\ref{alg:Tseng-2008}.

\begin{remark}

(Greedy steps in Algorithm \ref{alg:Tseng-2008}) We remark that the
greedy step can be performed in \eqref{eq:Tseng-14}. Note that $u^{k+1}$
in \eqref{eq:Tseng-14} can be chosen to be $\hat{u}^{k+1}$. But
the greedy steps of the form \eqref{eq:Dykstra-min-subpblm} can be
performed in \eqref{eq:Tseng-14} (with $u$'s in place of $z$'s
there). These greedy steps can be performed asynchronously like as
discussed in Remark \ref{rem:distrib-comp}.

\end{remark}

\section{Conclusion}

We have done what we set out to do in Subsection \ref{subsec:contrib}.
In short, we noticed that a dual ascent algorithm can give us a distributed
and asynchronous algorithm with deterministic convergence for time-varying
graphs when the function on each vertex is strongly convex with a
known modulus. A separate related algorithm is proposed for the case
when the function on each vertex is not necessarily strongly convex,
but Example \ref{exa:alg-is-stuck} shows that the algorithm can fail
to converge.

Note that in Example \ref{exa:alg-is-stuck}, the failure of Algorithm
\ref{alg:distributed-dual-ascent} is identified by the primal variable
$-1$ in $\partial f_{1}^{*}(0)$ and $\partial f_{2}^{*}(0)$ being
obtained when $S=\{1,2\}$, while the primal variable $1$ in $\partial f_{2}^{*}(0)$
and $\partial f_{3}^{*}(0)$ was obtained when $S=\{2,3\}$. Is there
a primal dual algorithm that has a number of the properties listed
in Subsection \ref{subsec:Distrib-algs} and \ref{subsec:contrib}?

\section*{Acknowledgments}

We thank Mert G\"{u}rb\"{u}zbalaban
and Necdet Serhat Aybat for discussions leading to this paper. Lastly,
we thank the associate editor, the anonymous referees and the journal
staff for the quick review of this paper.

\bibliographystyle{siamplain}
\bibliography{refs}

\providecommand{\noop}[1]{}
\begin{thebibliography}{10}

\bibitem{Pesquet_Abboud_gang_2017_JMIV}
{\sc F.~Abboud, E.~Chouzenoux, J.-C. Pesquet, J.-H. Chenot, and L.~Laborelli},
  {\em Dual block-coordinate forward-backward algorithm with application to
  deconvolution and deinterlacing of video sequences}, Journal of Mathematical
  Imaging and Vision, 59 (2017), pp.~415--431.

\bibitem{Aybat_Hamedani_2016}
{\sc N.~Aybat and E.~Hamedani}, {\em A primal-dual method for conic constrained
  distributed optimization problems}, in Advances in Neural Information
  Processing Systems 29, Curran associates, Red Hook, NY, 2016, pp.~5049--5057.

\bibitem{Aytekin_F_Johansson_2016}
{\sc A.~Aytekin, H.~Feyzmahdavian, and M.~Johansson}, {\em Analysis and
  implementation of an asynchronous optimization algorithm for the parameter
  server}.
\newblock arxiv eprint 1610.05507, 2016.

\bibitem{BB96_survey}
{\sc H.~Bauschke and J.~Borwein}, {\em On projection algorithms for solving
  convex feasibility problems}, SIAM Rev., 38 (1996), pp.~367--426.

\bibitem{BauschkeCombettes11}
{\sc H.~Bauschke and P.~Combettes}, {\em Convex Analysis and Monotone Operator
  Theory in {H}ilbert Spaces}, Springer, 2011.

\bibitem{Beck_alt_min_SIOPT_2015}
{\sc A.~Beck}, {\em On the convergence of alternating minimization for convex
  programming with applications to iteratively reweighted least squares and
  decomposition schemes}, SIAM J. Optim., 25 (2015), pp.~185--209.

\bibitem{BeckTeboulle2009}
{\sc A.~Beck and M.~Teboulle}, {\em A fast iterative shrinkage-thresholding
  algorithm for linear inverse problems}, SIAM J. Imaging Sciences, 2 (2009),
  pp.~183--202.

\bibitem{Beck_Tetruashvili_2013}
{\sc A.~Beck and L.~Tetruashvili}, {\em On the convergence of block coordinate
  descent type methods}, SIAM J. Optim., 23 (2013), pp.~2037--2060.

\bibitem{Bianchi_Hachem_Iutzeler_2nd_paper_dist}
{\sc P.~Bianchi, W.~Hachem, and F.~Iutzeler}, {\em A stochastic coordinate
  descent primal-dual algorithm and applications}, in 2014 IEEE International
  workshop on machine learning for signal processing, Sept. 21--24 2014.

\bibitem{Boyd_distrib_averaging}
{\sc S.~Boyd, A.~Ghosh, B.~Prabhakar, and D.~Shah}, {\em Randomized gossip
  algorithms}, IEEE Trans. Information Theory, 52 (2006), pp.~2508--2530.

\bibitem{Boyd_Eckstein_ADMM_review}
{\sc S.~Boyd, N.~Parikh, E.~Chu, B.~Peleato, and J.~Eckstein}, {\em Distributed
  optimization and statistical learning via the alternating direction method of
  multipliers}, Foundations and Trends in Machine Learning, 3 (2010),
  pp.~1--122.

\bibitem{BD86}
{\sc J.~Boyle and R.~Dykstra}, {\em A method for finding projections onto the
  intersection of convex sets in {H}ilbert spaces}, in Advances in Order
  Restricted Statistical Inference, Lecture notes in Statistics, Springer, New
  York, 1985, pp.~28--47.

\bibitem{Facchinei_Scutari_2016}
{\sc L.~Cannelli, F.~Facchinei, V.~Kungurtsev, and G.~Scutari}, {\em
  Asynchronous parallel algorithms for nonconvex big-data optimization, part
  {I}: Model and convergence}.
\newblock arxiv eprints 1607.04818, 2016.

\bibitem{Combettes_Dung_Vu_SVAA}
{\sc P.~Combettes, D.~{D\~{u}ng}, and B.~{V\~{u}}}, {\em Dualization of signal
  recovery problems}, Set-Valued and Variational Analysis, 18 (2010),
  pp.~373--404.

\bibitem{Combettes_Dung_Vu_JMAA}
{\sc P.~Combettes, D.~{D\~{u}ng}, and B.~{V\~{u}}}, {\em Proximity for sums of
  composite functions}, Journal of Mathematical Analysis and Applications, 380
  (2011), pp.~680--688.

\bibitem{Eckstein_Combettes_MAPR}
{\sc P.~Combettes and J.~Eckstein}, {\em Asynchronous block-iterative
  primal-dual decomposition methods for monotone inclusions}, Math. Prog. Ser.
  B, 168 (2018), pp.~645--672.

\bibitem{Condat_2013}
{\sc L.~Condat}, {\em A primal-dual splitting method for convex optimization
  involving {L}ipschitzian, proximable and linear composite terms}, Journal of
  Optimization Theory and Applications, 158 (2013), pp.~460--479.

\bibitem{Deustch01}
{\sc F.~Deutsch}, {\em Best Approximation in Inner Product Spaces}, Springer,
  2001.
\newblock CMS Books in Mathematics.

\bibitem{Distrib_averaging_Dimakis_Kar_Moura_Rabbat_Scaglione}
{\sc A.~G. Dimakis, S.~Kar, J.~M.~F. Moura, M.~G. Rabbat, and A.~Scaglione},
  {\em Gossip algorithms for distributed signal processing}, Proceedings of the
  IEEE, 98 (2010), pp.~1847--1864.

\bibitem{Dykstra83}
{\sc R.~Dykstra}, {\em An algorithm for restricted least-squares regression},
  J. Amer. Statist. Assoc., 78 (1983), pp.~837--842.

\bibitem{Eckstein_Svaiter_2009_SICON}
{\sc J.~Eckstein and B.~Svaiter}, {\em General projective splitting methods for
  sums of maximal monotone operators}, SIAM J. Control Optim., 48 (2009),
  pp.~787--811.

\bibitem{EsRa11}
{\sc R.~Escalante and M.~Raydan}, {\em Alternating Projection Methods}, SIAM,
  2011.

\bibitem{Gabay_ADMM}
{\sc D.~Gabay}, {\em Chapter {IX} {A}pplications of the method of multipliers
  to variational inequalities}, in {A}ugmented {L}agrange {M}ethods:
  {A}pplications to the Solution of Boundary Valued Problems, M.~Fortin and
  R.~Glowinski, eds., vol.~15, North Holland, Amsterdam, 1983, pp.~299--331.

\bibitem{Gaffke_Mathar}
{\sc N.~Gaffke and R.~Mathar}, {\em A cyclic projection algorithm via duality},
  Metrika, 36 (1989), pp.~29--54.

\bibitem{Gurbuzbalaban_Ozdaglar_Parrilo_SIOPT_2017}
{\sc M.~Gurbuzbalaban, A.~Ozdaglar, and P.~Parrilo}, {\em On the convergence
  rate of incremental aggregated gradient algorithms}, SIAM J. Optim., 27
  (2017), pp.~1035--1048.

\bibitem{Han88}
{\sc S.~Han}, {\em A successive projection method}, Math. Programming, 40
  (1988), pp.~1--14.

\bibitem{Hundal-Deutsch-97}
{\sc H.~Hundal and F.~Deutsch}, {\em Two generalizations of {D}ykstra's cyclic
  projections algorithm}, Math. Programming, 77 (1997), pp.~335--355.

\bibitem{Iutzeler_Bianchi_Ciblat_Hachem_1st_paper_dist}
{\sc F.~Iutzeler, P.~Bianchi, P.~Ciblat, and W.~Hachem}, {\em Asynchronous
  distributed optimization using a randomized alternating direction method of
  multipliers}, in Proceedings of the 52nd Conference on Decision Control,
  Florence, Italy, Dec. 2013, pp.~3671--3676.

\bibitem{LeeNedich2013}
{\sc S.~Lee and A.~Nedi{\'{c}}}, {\em Distributed random projection algorithm
  for convex optimization}, IEEE Journal of Selected Topics in Signal
  Processing, 7 (2013), pp.~221--229.

\bibitem{Nedich_survey}
{\sc A.~Nedich}, {\em Convergence rate of distributed averaging dynamics and
  optimization in networks}, Foundations and Trends in Systems and Control, 2
  (2015), pp.~1--100.

\bibitem{Nedich_talk_2017}
{\sc A.~Nedich}, {\em Fast algorithms for distributed optimization over
  time-varying graphs}.
\newblock Talk at DIMACS Workshop on Distributed Optimization, Information
  Processing, and Learning,
  https://www.nrel.gov/grid/assets/pdfs/aeg-nedich.pdf, 2017.

\bibitem{Nedich_Olshevsky}
{\sc A.~Nedich and A.~Olshevsky}, {\em Distributed optimization over
  time-varying directed graphs}, IEEE Transactions on Automatic Control, 60
  (2015), pp.~601--615.

\bibitem{Nedich_Olshevsky_Shi}
{\sc A.~Nedich, A.~Olshevsky, and W.~Shi}, {\em Achieving geometric convergence
  for distributed optimization over time-varying graphs}, SIAM J. Optim., 27
  (2017), pp.~2597--2633.

\bibitem{Nesterov_1983}
{\sc Y.~Nesterov}, {\em A method for solving a convex programming problem with
  rate of convergence {$O(\frac{1}{k^2})$}}, Soviet Math. Doklady, 269 (1983),
  pp.~543--547.
\newblock (in Russian).

\bibitem{Notars_asyn_distrib_2015}
{\sc I.~Notarnicola and G.~Notarstefano}, {\em Asynchronous distributed
  optimization via randomized dual proximal gradient}, IEEE Transactions on
  Automatic Control, 62 (2017), pp.~2095--2106.

\bibitem{Ozdaglar_Nedich_Parrilo}
{\sc A.~Ozdaglar, A.~Nedi\'{c}, and P.~Parrilo}, {\em Constrained consensus and
  optimization in multi-agent networks}, IEEE Trans. Automat. Control, 55
  (2010), pp.~922--938.

\bibitem{Pang_DBAP}
{\sc C.~Pang}, {\em The supporting halfspace - quadratic programming strategy
  for the dual of the best approximation problem}, SIAM J. Optim., 26 (2016),
  pp.~2591--2619.

\bibitem{Pang_Dyk_spl}
{\sc C.~Pang}, {\em {D}ykstra splitting and an approximate proximal point
  algorithm for minimizing the sum of convex functions},  (2017).
\newblock arxiv eprint 1709.09499.

\bibitem{AROCK_Peng_Xu_Yan_Yin}
{\sc Z.~Peng, Y.~Xu, M.~Yan, and W.~Yin}, {\em {AROCK}: An algorithmic
  framework for asynchronous parallel coordinate updates}, SIAM J. Sci.
  Comput., 38 (2016), pp.~A2851--A2879.

\bibitem{Borkar_distrib_dyk}
{\sc S.~Phade and V.~Borkar}, {\em A distributed {B}oyle-{D}ykstra-{H}an
  scheme}, SIAM J. Optim., 27 (2017), pp.~1880--1897.

\bibitem{Nedic_Ram_Veeravalli_2010}
{\sc S.~S. Ram, A.~Nedi\'{c}, and V.~Veeravalli}, {\em Distributed stochastic
  subgradient projection algorithms for convex optimization}, J. Optim. Theory
  and Appl., 147 (2010), pp.~516--545.

\bibitem{EXTRA_Shi_Ling_Wu_Yin}
{\sc W.~Shi, Q.~Ling, G.~Wu, and W.~Yin}, {\em {EXTRA}: An exact first-order
  algorithm for decentralized consensus optimization}, SIAM J. Optim., 25
  (2015), pp.~944--966.

\bibitem{Tibshirani_on_Dykstra}
{\sc R.~Tibshirani}, {\em {D}ykstra's algorithm, {ADMM}, and coordinate
  descent: Connections, insights, and extensions},  (2017).

\bibitem{Tseng_JOTA_2001}
{\sc P.~Tseng}, {\em Convergence of a block coordinate descent method for
  nondifferentiable minimization}, J. Optim. Theory Appl., 109 (2001),
  pp.~475--494.

\bibitem{Tseng_APG_2008}
{\sc P.~Tseng}, {\em On accelerated proximal gradient methods for
  convex-concave optimization},  (2008).
\newblock manuscript.

\bibitem{Vu_2013}
{\sc B.~V{\~{u}}}, {\em A splitting algorithm for dual monotone inclusions
  involving cocoercive operators}, Advances in Computational Mathematics, 38
  (2013), pp.~667--681.

\end{thebibliography}

\end{document}